\newcommand{\nextverbatimspread}[1]{%
  \def\Verbatim@font{%
    \linespread{0cm}\normalfont\ttfamily
    \gdef\Verbatim@font{\normalfont\ttfamily}}
}
\newcommand{\CM}{Cohen-Macaulay}
\newcommand{\m}{\mathfrak{m} }
\newcommand{\R}{\mathcal{R}}
\providecommand{\D}{{\mathcal D}}
\newcommand{\Z}{\mathbb{Z} }
\newcommand{\N}{\mathbb{N} }
\newcommand{\TT}{\mathcal{T}}
\newcommand{\K}{\mathbb{K}_{\bullet} }
\newcommand{\A}{\mathcal{A}}
\newcommand{\B}{\mathcal{B}}
\newcommand{\F}{\mathbf{F}_{\bullet} }
\newcommand{\lrt}{\longrightarrow}
\newcommand{\bideg}{\operatorname{bideg}}
\newcommand{\im}{\operatorname{image}}
\newcommand{\coker}{\operatorname{coker}}
\newcommand{\sym}{\operatorname{Sym}}
\newcommand{\chars}{\operatorname{char}}
\newcommand{\depth}{\operatorname{depth}}
\newcommand{\grade}{\operatorname{grade}}
\newcommand{\gr}{\operatorname{gr}}
\newcommand{\fitt}{\operatorname{Fitt}}
\newcommand{\reg}{\operatorname{reg}}
\newcommand{\sol}{\operatorname{Sol}}
\newcommand{\reltype}{\operatorname{reltype}}
\newcommand{\inn}{\operatorname{in}}
\newcommand{\hgt}{\operatorname{ht}}
\providecommand\Spec{\text{\rm Spec}}
\newcommand{\projdim}{\operatorname{projdim}}
\newcommand{\Hom}{\operatorname{Hom}}
\newcommand{\Ext}{\operatorname{Ext}}
\newcommand{\Tor}{\operatorname{Tor}}
\newcommand{\lcm}{\operatorname{LCM}}
\theoremstyle{plain}
\newtheorem{thm}{Theorem}
\newtheorem{theorem}{Theorem}[section]
\newtheorem{corollary}[theorem]{Corollary}
\newtheorem{lemma}[theorem]{Lemma}
\newtheorem{proposition}[theorem]{Proposition}
\theoremstyle{definition}
\newtheorem{definition}[theorem]{Definition}
\newtheorem{remark}[theorem]{Remark}
\newtheorem{example}[theorem]{Example}
\newtheorem{notation}[theorem]{Notation}
\newtheorem*{example*}{\it Example}
\theoremstyle{remark}
\newtheorem{claim}{\it Claim}
\newtheorem*{claim*}{\it Claim}
\newtheorem*{case*}{\it Case}
\newtheorem*{note*}{\it Note}
\begin{document}
\title[On the defining equations of Rees algebra]{On the defining equations of Rees algebra of a height two perfect ideal using the theory of $D$-modules}
\author{Sudeshna Roy}
\date{\today}
\address{Department of Mathematics, IIT Bombay, Powai, Mumbai 400 076}
\email{sudeshnaroy.1@gmail.com}
\subjclass[2010]{Primary 13A30, 13N10, 13D45; Secondary 13D02.}
\keywords{Rees algebra, defining ideals, symmetric algebra, Hilbert-Burch theorem, local cohomology, $D$-modules, Weyl algebra,
	b-functions, relation type.}

\begin{abstract}
Let $k$ be a field of characteristic zero, and $R=k[x_1, \ldots, x_d]$ with $d \geq 3$ be a polynomial ring in $d$ variables. Let $\m=(x_1, \ldots, x_d)$ be the homogeneous maximal ideal of $R$. Let $\mathcal{K}$ be the kernel of the canonical map $\alpha: \sym(I) \rightarrow \R(I)$, where $\sym(I)$ (resp. $\R(I)$) denotes the symmetric algebra (resp. the Rees algebra) of an ideal $I$ in $R$. We study $\mathcal{K}$ when $I$ is a height two perfect ideal minimally generated by $d+1$ homogeneous elements of same degree and satisfies $G_d$, that is, the minimal number of generators of the ideal $I_{\mathfrak{p}}$, $\mu(I_{\mathfrak{p}}) \leq \dim R_{\mathfrak{p}}$ for every $\mathfrak{p} \in V(I) \backslash \{\m\}$. We show that 
\begin{enumerate}[{\rm (i)}]
	\item $\mathcal{K}$ can be described as the solution set of a system of differential equations,
	\item the whole bigraded structure of $\mathcal{K}$ is characterized by the integral roots of certain $b$-functions,
	\item certain de Rham cohomology groups can give partial information about $\mathcal{K}$. 
\end{enumerate}
\end{abstract}
\maketitle

\section{Introduction}
Let $R$ be a commutative Noetherian ring, and $I=(f_1, \ldots, f_n)$ be an ideal in $R$. The {\it Rees algebra} of I is defined as $\R(I) = R[It] = \bigoplus_{i=0}^ \infty I^it^i$. We can see $\R(I)$ as a quotient of the
polynomial ring $S=R[T_1,\ldots,T_{n}]$ via the surjective map 
\begin{equation}\label{intro-eq1}
	\begin{split}
	\psi: & S \to\R(I) \\
	& T_i \mapsto f_it.	
	\end{split}
\end{equation}
The {\it defining ideal} of the Rees algebra $\R(I)$ is $\mathcal{I} = \ker\psi$, the kernel of the map $\psi$ and the {\it defining equations} are the generators 
of $\mathcal{I}$. Now the map $\theta: R^n \to I$ defined by $(r_1, \ldots, r_n) \mapsto \sum_{i=1}^n r_i f_i$ induces an $R$-algebra homomorphism $\beta: S=R[T_1, \ldots, T_n] \to \sym(I)$, where by $\sym(I)$ we denote the {\it symmetric algebra} of $I$. Clearly $\mathcal{L}:=\ker \beta$ is generated by all linear forms $\sum_{i=1}^n r_i T_i$ such that $\sum_{i=1}^n r_i f_i=0$. Therefore $\psi$ factors through $\sym(I)$ and we get the following diagram 
\[
\xymatrix@R0.5cm{S \ar[d]^{\beta} \ar[r]^{\psi}& \R(I)\ar@{<-}[ld]^{\alpha}\\
	\sym(I)}\] 
Thus we get a relation between $\sym(I)$ and $\R(I)$ in the form of the following exact sequence 
\[0 \to \mathcal{K} \to \sym(I) \overset{\alpha}{\lrt} \R(I) \to 0,\]
where $\mathcal{K}=\ker \alpha$. As $\mathcal{K}\cong \mathcal{I}/\mathcal{L}$, 
it is enough to study is $\mathcal{K}$. Determining the equations of the Rees algebra is a fundamental problem and several works are going on this topic. A brief survey 
in this regards can be found in \cite[Introduction]{KPU}. 
The defining ideal is known when $I$ is a linearly presented height two perfect ideal satisfying $G_d$. In this case $\R(I)$
is \CM. So in recent years, our main aim is to investigate $\mathcal{I}$ when $I$ is a height two perfect ideal which is either not linearly presented or does not satisfy $G_d$. In this situation mostly $\R(I)$ is not \CM. 
A study of Rees algebra of grade two, almost linearly presented (all but the last column of the presentation matrix $\varphi$ of the ideal are linear and the last column consists of homogeneous entries of arbitrary degree $n \geq 1$), perfect ideal which is minimally generated by homogeneous elements of the same degree, was done by A. R. Kustin, C. Polini and B. Ulrich in \cite{KPU} for $d=2$ and was generalized by J. A. Boswell and V. Mukundan in \cite{BM} for $d>2$.
In \cite{YC}, Yairon Cid first described $\mathcal{K}$ when $I$ is a height two ideal in $k[x_1,x_2]$, minimally generated by three homogeneous polynomials of the same degree using ``$D$-module theory". 
In this article we generalize his results for dimension $d \geq 3$ mostly following his path. We restrict our attention to height two perfect ideals satisfying $G_d$ and minimally generated by $d+1$ homogeneous elements of same degree. 

\s\label{Std-Ass} Let $k$ be a field of characteristic zero, $R=k[x_1, \ldots, x_d]$ with $d \geq 3$ be a polynomial ring in $d$ variables, and $\m=(x_1, \ldots, x_d)$ be the homogeneous maximal ideal in $R$. Let $I=(f_1,\ldots,f_{d+1}) \subset R$ be a perfect ideal of height two minimally generated by $d+1$ homogeneous elements of same degree $\nu$. Notice $\R(I) \cong S/ \mathcal{I}$ where $S=R[T_1,\ldots,T_{d+1}]=k[x_1, \ldots, x_d,T_1,\ldots,T_{d+1}]$. We consider $S$ as a bigraded $k$-algebra, where $\bideg(T_i)=(1, 0)$ and $\bideg (x_j)=(0, 1)$. Further, if we consider $\bideg t=(1, -\nu)$, then the map $\psi$ defined in \eqref{intro-eq1} becomes bi-homogeneous and hence $\mathcal{I}=\ker \psi$ becomes a bi-graded $S$-module. Let $U=k[T_1,\ldots,T_{d+1}]$ be a polynomial ring. Clearly $S=R \otimes_k U$. If $M$ is a bi-graded $S$-module, we write \[M_{p,*}=\bigoplus_{q \in \Z}M_{p,q} \quad \text{and} \quad M_{*,q}=\bigoplus_{p \in \Z}M_{p,q},\] where $M_{p,*}$ is a graded $R$-module and $M_{*,q}$ is a graded $U$-module. 

As $I$ is a perfect ideal, $\projdim_R(R/I)=\grade I=\hgt I =2$ 
and hence $\projdim I=1$. By {\it Hilbert-Burch theorem}, $I$ has a free resolution of the form
\begin{equation}\label{eq1}
0 \to R^{d} \overset{\varphi} \longrightarrow R^{d+1} \xlongrightarrow{[f_1, \ldots, f_{d+1}]} I \to 0
\end{equation}
 and $I \cong I_{d}(\varphi)$ where $I_{d}(\phi)$ is the ideal generated by $d \times d$-minors of $\varphi=(a_{i,j})_{(d+1) \times d}$ (see \cite[Theorem 1.4.17]{BH}).
If
 \begin{equation}\label{symeq}
 [g_1, \ldots, g_d]= [T_1, \ldots, T_{d+1}] \cdot \varphi,
 \end{equation}
then $\sym(I) \cong S/(g_1, \ldots, g_d)$. Thus $\mathcal{K} = \mathcal{I}/(g_1, 
\ldots, g_d)$. Note that 
\[g_j=a_{1,j}T_1+ \cdots+a_{d+1,j}T_{d+1} \quad \forall 1\leq j\leq d.\] Set $\deg a_{i,j}=\nu_j$ for all $1 \leq j \leq d$ to make all $g_j$ homogeneous with $\bideg g_j=(1,\nu_j)$ in $S$. Then $\mathcal{K}$ gets a natural structure of bi-graded $S$-module.  Since $I$ is generated by the maximal minors of $\varphi$ so we get that $\sum_{i=1}^{d}\nu_i = \nu$. We can rewrite \eqref{eq1} in the following way 
\begin{equation}\label{eq2}
0 \to \bigoplus_{i=1}^{d} R(-\nu-\nu_i) \overset{\varphi} \longrightarrow \bigoplus_{i=1}^{d+1} R(-\nu) \xlongrightarrow{[f_1,\ldots, f_{d+1}]} I \to 0,
\end{equation} For $1 \leq r \leq d$, let $\varphi_r$ be the $(d+1) \times r$ matrix consisting of the first $r$ columns of $\varphi$ and set
$E_r = \coker \varphi_r$, so that $\varphi_{d}=\varphi$ and $E_{d} = I$. As $\varphi$ is injective so is $\varphi_r$ for all $1 \leq r \leq d$. Note that
\begin{equation}\label{eq3}
	0 \to \bigoplus_{i=1}^{r} R(-\nu-\nu_i) \overset{\varphi_r} \longrightarrow \bigoplus_{i=1}^{d+1} R(-\nu) \xlongrightarrow{[f_1,\ldots, f_{d+1}]} E_r \to 0
\end{equation}
is a presentation of $E_r$. Now $(g_1, \ldots, g_r)=[T_1, \ldots, T_{d+1}] \cdot \varphi_r$ implies that $\sym(E_r) \cong S/(g_1, \ldots, g_r)$. 
\begin{definition}
If the minimal number of generators of the ideal $I_{\mathfrak{p}}$, $\mu(I_{\mathfrak{p}}) \leq \dim R_{\mathfrak{p}}$ for every $\mathfrak{p} \in V(I) \backslash \{\m\}$, then we say that $I$ satisfies $G_d$, where $d$ is the dimension of the ring $R$.
\end{definition}
In our case, as $I$ is a height two perfect ideal generated by homogeneous polynomials of the same degree $\nu$, by \cite[(3.7.3)]{KPU} \[\mathcal{K'}= H^0_{\m}(\sym(I(\nu))) \iff I \text{ satisfies } G_d,\] where $\mathcal{K}'=\ker\left(\sym(I(\nu)) \to \R(I)\right)$. Moreover, as $\mu(I)=d+1 > \dim R=d$ so $\mathcal{K}' \neq 0$ by \cite[(3.7.2)]{KPU}.  

\s\label{equv-Gd-Fitt} As mentioned in \cite[p. 2]{KPU}, the condition $G_d$ can be interpreted in terms of the height of Fitting ideals.
If $I$ has a presentation \eqref{eq1}, then $I$ satisfies $G_d$ if $\hgt \fitt_i(I)=\hgt I_{d+1-i}(\varphi)>i$ for all $i<d$, that is, $\hgt I_{i}(\varphi)>d+1-i$ for all $d+1-i<d$, i.e., $i>1$. Thus $\hgt I_{2}(\varphi)\geq d$ (as $\hgt I_{2}(\varphi)>d+1-2=d-1$). Since $I_1(\varphi) \supseteq I_2(\varphi)$ so we get $d \leq \hgt I_{2}(\varphi) \leq \hgt I_1(\varphi) \leq \dim R=d$ and hence $\hgt I_1(\varphi)=d$. 

\begin{remark}
Let $I$ be a height two ideal in $k[x_1,x_2]$, minimally generated by three homogeneous polynomials of the same degree. By {\it Auslander Buchsbaum formula} we have $\projdim_R(R/I)+\depth(R/I)=\depth(R)$. Since $\dim R/I=\dim R -\hgt I=0$ so $\depth(R/I)=0$. Thus $\projdim_R(R/I)=2$ which implies that $\projdim_R(I)=1$. By \cite[Proposition 1.4.5]{BH} we get that $I$ has a free resolution \[\F: \quad 0\to R^2 \overset{\varphi}{\lrt} R^3 \to I \to 0.\] So $I \cong I_2(\varphi)$ is a perfect ideal by {\it Hilbert-Burch theorem}.
As $\hgt I_2(\varphi)=\hgt I=2$, by \ref{equv-Gd-Fitt} it follows that $I$ satisfies $G_2$.	
\end{remark}

This article contains the following four main results which are generalization of Theorem A, Theorem B, Theorem C and Theorem D in \cite{YC}. Let $D=A_d(k)=k\langle x_1, \ldots,x_d, \partial_1, \ldots, \partial_d\rangle$ be the $d$-th {\it Weyl algebra} over $k$ and $\mathcal{T}=A_d(k)[T_1, \ldots, T_{d+1}]$ be a polynomial ring over $D$. We define $d$ differential operators $L_i=\mathcal{F}(g_i)$ for all $1 \leq i \leq d$ by applying the Fourier transform $\mathcal{F}$ to $g_i$'s in \eqref{symeq}. Here $\mathcal{F}: \mathcal{T} \to \mathcal{T}$ is an automorphism defined by $\mathcal{F}(x_i)=\partial_i, \mathcal{F}(\partial_i)=-x_i$ and $\mathcal{F}(T_i)=T_i$. The following result says that $\mathcal{K}$ can be described as a solution set of a system of differential equations. As an application of it we get Corollary \ref{vanishing-nonvanishing} which gives the highest $x$-degree for an element in the graded part $\mathcal{K}_{p,*}$ for all $p \geq d$. 
\begin{thm}
	\label{thmA}
	Let $I \subset R=k[x_1,\ldots, x_d]$ with $d \geq 3$ be a perfect ideal of height two satisfying $G_d$ and minimally generated by $d+1$ homogeneous elements of same degree $\nu$, and let $L_i=\mathcal{F}(g_i)$ be the Fourier transform of $g_i$ from \eqref{symeq} with $\bideg g_i=(1,\nu_i)$ in $S=R[T_1,\ldots,T_{d+1}]$ for $i=1, \ldots, d$. Then we have the following isomorphism of bigraded $S$-modules 
	$$
	\mathcal{K} \cong {\sol\Big(L_1,\ldots, L_d;S\Big)}_{\mathcal{F}}(-d, -\nu+d), 
	$$ 
	where ${\sol(L_1,\ldots, L_d;S)}=\{ h \in S \mid L_i \bullet h = 0 \text{ for all } i=1, \ldots, d\}$ and the subscript-$\mathcal{F}$ is used to stress the bigraded $S$-module structure induced on $\sol(L_1,\ldots,L_d;S)$ by the twisting of the Fourier transform ${\mathcal{F}}$ {\rm (}see Section 3 or \cite[Lemma 3.9]{YC}{\rm )}.
\end{thm}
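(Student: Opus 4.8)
The plan is to establish the isomorphism in three moves --- a reduction to local cohomology, a Koszul resolution, and a Fourier-transform identification --- in the spirit of \cite{YC}'s treatment of the case $d=2$. For the first move I would reduce to showing $\mathcal{K}\cong H^0_\m(\sym(I))$ as bigraded $S$-modules. Since $\R(I)=R[It]$ is a subring of the domain $R[t]$ it is $\m$-torsion-free, so applying $\Gamma_\m(-)$ to the exact sequence $0\to\mathcal{K}\to\sym(I)\to\R(I)\to 0$ yields $H^0_\m(\sym(I))=H^0_\m(\mathcal{K})\subseteq\mathcal{K}$, with equality exactly when $\mathcal{K}$ is annihilated by a power of $\m$; and by \cite[(3.7.3)]{KPU} the hypothesis $G_d$ is precisely what makes this so. It therefore remains to compute $H^0_\m(\sym(I))$.

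For the second move I would show that $g_1,\dots,g_d$ form a regular sequence on the Cohen--Macaulay ring $S$ (which has dimension $2d+1$); since $\sym(I)\cong S/(g_1,\dots,g_d)$, this is equivalent to the equality $\dim\sym(I)=d+1$. The latter follows from the Huneke--Rossi dimension formula $\dim\sym_R(I)=\sup_{\fp\in\Spec R}\bigl(\dim R/\fp+\mu(I_\fp)\bigr)$: for $\fp\in V(I)\setminus\{\m\}$ the $G_d$ condition gives $\dim R/\fp+\mu(I_\fp)\le(d-\hgt\fp)+\hgt\fp=d$, while for every other prime (namely $\fp=\m$, or $\fp$ with $I\not\subseteq\fp$) one has $\dim R/\fp+\mu(I_\fp)\le d+1$; hence the supremum is $d+1$. (One may also obtain the same bound by estimating the dimension of $V(g_1,\dots,g_d)\subseteq\Spec R\times\mathbb{A}^{d+1}$ along the rank strata of $\varphi$, via the height estimates on the $I_j(\varphi)$ in \ref{equv-Gd-Fitt}.) Consequently the Koszul complex $\mathbb{K}_\bullet=\mathbb{K}_\bullet(g_1,\dots,g_d;S)$ is a bigraded free resolution of $\sym(I)$; since $\bideg g_j=(1,\nu_j)$ and $\sum_{j=1}^d\nu_j=\nu$, it has $\mathbb{K}_1=\bigoplus_{j=1}^{d}S(-1,-\nu_j)$ and top term $\mathbb{K}_d=S(-d,-\nu)$.

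For the third move I would apply $\Gamma_\m$ to the resolution $\mathbb{K}_\bullet$. As $S=R\otimes_k U$ with $R$ Cohen--Macaulay, $H^i_\m(S)=0$ for $i\ne d$ and $H^d_\m(S)=H^d_\m(R)\otimes_k U$, so only $H^d_\m$ survives on the free modules $\mathbb{K}_i$; the hypercohomology spectral sequence of $\mathbb{K}_\bullet$ collapses and identifies $H^{d-j}_\m(\sym(I))$ with the Koszul homology $H_j\bigl(g_1,\dots,g_d;H^d_\m(S)\bigr)$, up to the twists carried by the terms of $\mathbb{K}_\bullet$. Taking $j=d$, the top Koszul homology is an annihilator, and, incorporating $\mathbb{K}_d=S(-d,-\nu)$,
\[
H^0_\m(\sym(I))\;\cong\;\bigl(0:_{H^d_\m(S)}(g_1,\dots,g_d)\bigr)(-d,-\nu).
\]
To finish, recall that $H^d_\m(R)$ is the simple Weyl-algebra module $E_R(k)\cong D/D(x_1,\dots,x_d)=\mathcal{F}(R)$. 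A graded-simple module has a unique grading up to shift, so matching socle degrees --- that of $H^d_\m(R)$ sits in $x$-degree $-d$, that of $\mathcal{F}(R)$ in degree $0$ --- forces the bigraded isomorphism $H^d_\m(R)\cong\mathcal{F}(R)(0,d)$, hence $H^d_\m(S)\cong\mathcal{F}(S)(0,d)$ as bigraded $\mathcal{T}$-modules; this is the higher-dimensional counterpart of \cite[Lemma 3.9]{YC}. Under this isomorphism, multiplication by $g_i=\sum_k a_{ki}T_k$ on $H^d_\m(S)$ corresponds, on $\mathcal{F}(S)$ with its $\mathcal{F}$-twisted structure, to the differential operator $\mathcal{F}(g_i)\bullet=L_i\bullet$, up to an overall scalar $(-1)^{\nu_i}$ which does not affect kernels because $a_{ki}$ is homogeneous of degree $\nu_i$. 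Therefore $\bigl(0:_{H^d_\m(S)}(g_1,\dots,g_d)\bigr)\cong\sol(L_1,\dots,L_d;S)_{\mathcal{F}}(0,d)$, and combining the three moves,
\[
\mathcal{K}\;\cong\;\sol(L_1,\dots,L_d;S)_{\mathcal{F}}(0,d)(-d,-\nu)\;=\;\sol(L_1,\dots,L_d;S)_{\mathcal{F}}(-d,-\nu+d),
\]
as claimed.

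I expect the real difficulty to lie not in the homological algebra of the first and third moves --- which is routine once the regular-sequence property is available --- but in the bigraded bookkeeping attached to the Fourier transform: pinning down that the shift in $H^d_\m(R)\cong\mathcal{F}(R)(0,d)$ is exactly $(0,d)$, checking that it combines with the twist $(-d,-\nu)$ from the top of the Koszul complex to produce precisely $(-d,-\nu+d)$, and verifying that every identification is compatible with the full bigraded $S$-module structure (in particular the action of the $T_i$), not merely the $R$-module structure. The single genuinely new ingredient over \cite{YC} is the second move: in the two-variable case $G_2$ holds automatically, whereas here it is precisely the $G_d$ hypothesis that forces $\dim\sym(I)=d+1$, and hence makes available the Koszul resolution on which everything else rests.
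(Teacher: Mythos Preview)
Your proposal is correct and the core ingredients---the identification $\mathcal{K}=H^0_\m(\sym(I))$, the regular-sequence property of $g_1,\dots,g_d$ via Huneke--Rossi, and the Fourier isomorphism $H^d_\m(S)\cong S_{\mathcal{F}}(0,d)$---coincide with the paper's. The organizational route, however, is different. The paper first proves Theorem~\ref{thmD}, describing $\mathcal{K}$ inside $H^2_\m\big(S/(g_1,\dots,g_{d-2})\big)$, and then threads this through the chain of connecting homomorphisms $\gamma_1\circ\cdots\circ\gamma_{d-2}$ into $H^d_\m(S)$, checking by hand that the image of $\theta=\overline{\mathcal{F}}\circ\gamma$ is exactly $(0:_{S_{\mathcal{F}}}(g_1,\dots,g_{d-2}))$ before imposing the remaining two conditions. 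Your spectral-sequence argument collapses this chain in one stroke, landing directly on $(0:_{H^d_\m(S)}(g_1,\dots,g_d))(-d,-\nu)$; it is exactly the hypercohomology packaging of the paper's iterated connecting maps. Your route is shorter and cleaner for Theorem~\ref{thmA} alone, while the paper's route has the side benefit of producing Theorem~\ref{thmD} and Corollary~\ref{vanishing-nonvanishing} (the description of $\mathcal{K}_{*,\nu-d}$) along the way, which feed into later sections.
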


The following result gives us information about 
the lowest possible $x$-degree for an element in the graded part $\mathcal{K}_{p,*}$ and describes the $b$-function of a family of holonomic $D$-modules which is defined in Notation \ref{not}.
\begin{thm}[with hypotheses as in Theorem \ref{thmA}]
	\label{thmB}
	Then for each integer $p \ge d$ there exists a nonzero $b$-function $b_p(s)$, and we have a relation between the graded structure of $\mathcal{K}_{p,*}$ and the integral roots of $b_p(s)$ given in the following equivalence
	\[\mathcal{K}_{p,u} \neq 0 \qquad \Longleftrightarrow \qquad b_p(-\nu+d+u)=0.\]
	Even more, we have that these are the only possible roots of $b_p(s)$, that is,
	\[b_p(s)=\prod_{\{u \in \mathbb{Z}\;\mid\; \mathcal{K}_{p,u}\neq 0\}} (s+\nu-d-u).\]
\end{thm}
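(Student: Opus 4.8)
The plan is to transport the problem, via Theorem \ref{thmA}, from the module $\mathcal{K}_{p,*}$ to the solution space $\sol(L_1,\ldots,L_d;S)_{p+d,*}$ of the system of differential operators $L_i$, and then to recognize this solution space as a graded piece of a holonomic $D$-module whose local cohomology (or restriction) is controlled by a $b$-function. Fix an integer $p \ge d$. By Theorem \ref{thmA} we have $\mathcal{K}_{p,u} \cong \sol(L_1,\ldots,L_d;S)_{\mathcal{F},\,p+d,\,u-\nu+d}$, so the stated equivalence $\mathcal{K}_{p,u}\neq 0 \iff b_p(-\nu+d+u)=0$ is the same as asking that the $x$-graded piece of $\sol(L_1,\ldots,L_d;S)$ in $T$-degree $p+d$ and $x$-degree $w$ be nonzero precisely when $b_p(w)=0$. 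First I would isolate the $T$-degree-$(p+d)$ strand: since the $L_i$ are homogeneous of $T$-degree $1$ in $\mathcal{T}=A_d(k)[T_1,\ldots,T_{d+1}]$, applying them to a polynomial of $T$-degree $p+d$ lands in $T$-degree $p+d+1$, and one obtains a finite $k[x]$-linear (in fact $D$-linear) system on the finitely many coefficients. The key object is the holonomic $D$-module $\mathcal{N}_p$ from Notation \ref{not} (the one whose $b$-function we are asked to describe); the solution space in question is, up to the Fourier twist, $\Hom_D(\mathcal{N}_p, k[x])$ or an appropriate $\Ext$, graded by $x$-degree.

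The heart of the argument is the existence of the $b$-function. Here I would follow the classical Bernstein--Sato machinery: for the holonomic $D$-module $\mathcal{N}_p$, consider the action of the Euler operator $\theta = \sum_j x_j \partial_j$ (equivalently the grading operator) on an appropriate localization or on the associated module $\mathcal{N}_p[\theta^{-1}]$, and use holonomicity to guarantee that the $D$-submodule generated by a suitable generator modulo its $\theta$-shift is again holonomic, hence that some nonzero polynomial $b_p(s)$ annihilates the relevant class. Concretely, the $x$-degree $w$ part of $\sol$ is nonzero iff $w$ is a root of the indicial/$b$-polynomial attached to the $\theta$-action on $\mathcal{N}_p$, which is exactly the statement that $b_p(-\nu+d+u)=0$ after unwinding the twist $(-d,-\nu+d)$. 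I would cite the standard references (Björk, or Coutinho's book, or the corresponding lemma in \cite{YC}) for the fact that such a $b$-function exists and is nonzero for a holonomic module, and that its integral roots detect exactly the nonvanishing graded pieces of the solution space.

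For the second, sharper assertion --- that these are the \emph{only} roots, so that
\[
b_p(s)=\prod_{\{u \in \Z \;\mid\; \mathcal{K}_{p,u}\neq 0\}} (s+\nu-d-u),
\]
--- I would argue that $b_p(s)$ can be \emph{defined} to be the minimal such polynomial (the generator of the $b$-ideal), and then show its roots are \emph{simple} and are \emph{all} integral. Simplicity and integrality should follow from the concrete structure of $\mathcal{N}_p$: it arises from a graded (even bigraded) setup over a polynomial ring, so the relevant $\theta$-action is semisimple on each finite-dimensional graded piece and the characteristic variety is a union of coordinate-type strata, forcing the indicial roots to be integers with multiplicity one. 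Then the product formula is immediate: the minimal polynomial whose root set is exactly $\{\, -\nu+d+u : \mathcal{K}_{p,u}\neq 0 \,\}$ with all roots simple is precisely the displayed product. I would also need to check that the root set is finite, which follows because $\mathcal{K}$ is a finitely generated $S$-module (a subquotient of $\mathcal{I}$) and hence $\mathcal{K}_{p,*}$ is a finitely generated graded $R$-module, so it is nonzero in only finitely many degrees $u$; combined with Corollary \ref{vanishing-nonvanishing} (highest $x$-degree) and the lowest-degree bound announced just before the theorem, the support is squeezed into a finite interval.

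The main obstacle I anticipate is the passage between the \emph{existence} of \emph{some} nonzero $b_p(s)$ (soft, from holonomicity) and the \emph{exact} determination of its roots with correct multiplicities --- in particular ruling out non-integral roots and higher-order roots. This requires exploiting the precise bigraded/homogeneous nature of the operators $L_i$ (they are Fourier transforms of the linear-in-$T$ forms $g_i$ with known bidegrees $(1,\nu_i)$) so that the indicial equation along the grading direction has only integer solutions; essentially one must show the $D$-module $\mathcal{N}_p$ is \emph{regular holonomic} and monodromic with respect to the $\theta$-action. Everything else --- the $T$-degree bookkeeping, the twist $(-d,-\nu+d)$, translating $\sol$ back to $\mathcal{K}$ --- is routine given Theorem \ref{thmA} and the standard $D$-module dictionary.
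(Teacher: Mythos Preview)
Your overall framework matches the paper's: transport $\mathcal{K}_{p,*}$ via Theorem~\ref{thmA} to the solution space of the $L_i$'s in a fixed $T$-degree, package this as a holonomic $D$-module $M=D^m/N$ (Notation~\ref{not} and Proposition~\ref{M-holo}), and deduce existence of a nonzero $b$-function from holonomicity. The easy direction ($b_p(-\nu+d+u)\neq 0 \Rightarrow \mathcal{K}_{p,u}=0$) is also handled the same way as in Theorem~\ref{root}, by letting the $b_{J_i}(s)$ act on the components $h_i$ of a putative nonzero solution. (Small slip: the shift from Theorem~\ref{thmA} gives $\mathcal{K}_{p,u}\cong \mathcal{S}_{p-d,\,u-\nu+d}$, not $p+d$.)

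The genuine gap is in the hard direction, where you must show $b_M(s)$ \emph{divides} $\prod_{\{u:\mathcal{K}_{p,u}\neq 0\}}(s+\nu-d-u)$. Your proposal appeals to ``regular holonomicity'', ``semisimplicity of the $\theta$-action'', and ``coordinate-type strata of the characteristic variety'' to force integrality and simplicity of the roots, but none of these claims is substantiated, and there is no general theorem of this shape that applies out of the box. The paper does something quite different and much more concrete. It introduces the \emph{commutative} graded $R$-module $L=R^m/(R^{dn}\cdot F)$ with $F=\mathcal{F}(H)$, identifies $L(\nu)\cong {}^*\Ext_R^d(\sym_p(I),R)$ by dualizing the Koszul resolution of $\sym_p(I)$, and then invokes graded local duality to get $\mathcal{K}_{p,*}\cong {}^*\Hom_k(L(\nu-d),k)$, so that $\operatorname{end}(L)=\nu-d-u$ for $u$ the lowest $x$-degree in $\mathcal{K}_{p,*}$. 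The crucial algebraic input is the explicit identity (Lemma~\ref{belong})
\[
s(s+1)\cdots(s+k)=(-1)^{k+1}\sum_{|\alpha|=k+1}\frac{(k+1)!}{\alpha!}\,x^{\alpha}\partial^{\alpha}\in D\cdot(\partial_1,\ldots,\partial_d)^{k+1},
\]
which, together with the inclusion $L\hookrightarrow M$ obtained via the Fourier transform, shows that $s(s+1)\cdots(s+\operatorname{end}(L))\in \inn(J_i)\cap k[s]$ for every $i$; hence each $b_{J_i}(s)$, and therefore $b_M(s)$, divides $s(s+1)\cdots(s+\nu-d-u)$. This is the step your plan is missing: without the auxiliary commutative module $L$, local duality, and Lemma~\ref{belong}, there is no mechanism in your outline that bounds $b_M(s)$ from above or rules out extraneous (or multiple) roots.
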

The following result shows that there is an isomorphism of graded $U$-modules between $\mathcal{K}$ and a certain de Rham cohomology group.
\begin{thm}[with hypotheses as in Theorem \ref{thmA}]
	\label{thmC}
	Then we have the following isomorphism of graded $U=k[T_1,\ldots,T_{d+1}]$-modules
	\[\mathcal{K} \cong H_{dR}^0(Q) = \{w \in Q \mid \partial_i \bullet w= 0 \text{ for all } i=1, \ldots, d \},\]
	where $Q$ denotes the left $\TT$-module $\TT/\TT(L_1,\ldots, L_d)$. In particular, for any integer $p$ we have an isomorphism of $k$-vector spaces
\[\mathcal{K}_{p,*} \cong H_{dR}^0(Q_p) = \{w \in Q_p \mid \partial_i \bullet w= 0 \text{ for all } i=1, \ldots, d \}.\]
\end{thm}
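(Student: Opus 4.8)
The plan is to build the isomorphism $\mathcal{K} \cong H^0_{dR}(Q)$ by combining Theorem \ref{thmA} with a general adjunction between $\sol$-type functors and de Rham cohomology for holonomic $D$-modules. First I would recall the set-up: $\mathcal{T} = D[T_1,\ldots,T_{d+1}]$ acts on $S = R[T_1,\ldots,T_{d+1}]$ with $\partial_i$ acting as the usual partial derivative and $x_i, T_i$ by multiplication, so that $\sol(L_1,\ldots,L_d;S) = \Hom_{\mathcal{T}}(Q, S)$ where $Q = \mathcal{T}/\mathcal{T}(L_1,\ldots,L_d)$. The key classical fact I would invoke (for the $x$-variables only, treating the $T_i$ as parameters, i.e.\ working over $U = k[T_1,\ldots,T_{d+1}]$) is that for a holonomic $D$-module $N$ there is a natural isomorphism relating $\Hom$ into the polynomial ring $R$ with the $0$-th de Rham cohomology of $N$; concretely, $\Hom_{D}(N, R) \cong H^0\big(\text{the de Rham complex of }N\big)$, because $R = D/D(\partial_1,\ldots,\partial_d)$ as a left $D$-module and taking $\Hom_D(-, R)$ of a presentation of $N$ computes exactly the kernel of the $\partial_i$ acting on $N$. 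Applying this fibrewise in the $T$-variables upgrades it to a statement about $\mathcal{T}$-modules: $\sol(L_1,\ldots,L_d;S) = \Hom_{\mathcal{T}}(Q,S) \cong H^0_{dR}(Q) = \{w \in Q : \partial_i \bullet w = 0\ \forall i\}$, and this identification is $U$-linear since the $T_i$ commute with everything in sight.

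Next I would check that this abstract isomorphism is compatible with gradings. The module $Q$ inherits a bigrading from $\mathcal{T}$ once we assign $\bideg$ to the generators; the Fourier transform sends $g_i$ (of bidegree $(1,\nu_i)$) to $L_i$, and one tracks how $\mathcal{F}$ shifts bidegrees, exactly as encoded in the twist $(-d,-\nu+d)$ appearing in Theorem \ref{thmA}. Since the de Rham differential $\partial_i$ lowers the $x$-degree by one but does not touch the $T$-degree, $H^0_{dR}(Q)$ is naturally bigraded and in particular $H^0_{dR}(Q)_{p,*} = H^0_{dR}(Q_p)$, the de Rham cohomology of the slice $Q_p$ in $T$-degree $p$; this gives the "in particular" clause. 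I would then note that the twist $(-d,-\nu+d)$ from Theorem \ref{thmA} must be absorbed: composing the isomorphism $\mathcal{K} \cong \sol(L_1,\ldots,L_d;S)_{\mathcal{F}}(-d,-\nu+d)$ with $\sol(\cdots)_{\mathcal{F}} \cong H^0_{dR}(Q)$ produces $\mathcal{K} \cong H^0_{dR}(Q)(-d,-\nu+d)$ as bigraded modules — but as plain $U$-modules (forgetting the $S$-grading, keeping only the $T$-grading) the shift in the second coordinate is invisible and the shift in the first coordinate is a $U$-degree shift; here the claim in the theorem is only a $U$-module isomorphism, so I would phrase the final statement carefully, matching the way the twist is handled in \cite[Theorem C]{YC} (the $U$-module structure on $\mathcal{K}$ comes from the $T$-variables, and the relevant shift either is trivial or gets incorporated into the definition of $Q$'s grading).

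The main obstacle I anticipate is making the "fibrewise over $U$" de Rham argument rigorous: $S$ is not a holonomic $D$-module but a free $U$-module of infinite rank, so I cannot directly cite the holonomic duality theorem; instead I would argue that $Q$ is a finitely presented $\mathcal{T}$-module and that $\Hom_{\mathcal{T}}(Q, S)$ can be computed from the finite presentation $\mathcal{T}^d \xrightarrow{(L_1,\ldots,L_d)} \mathcal{T} \to Q \to 0$, giving $\sol(L_1,\ldots,L_d;S)$ as an explicit kernel, and separately that $H^0_{dR}(Q)$ is the kernel of the $\partial_i$ on $Q$; the isomorphism then comes from the standard "integration by parts" pairing $S \times Q \to k$ (or rather the natural evaluation map), exactly the device used in \cite{YC} for $d=2$. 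A secondary technical point is verifying holonomicity of $Q$ (and of the $D$-modules in Notation \ref{not}) so that the de Rham cohomology is finite-dimensional in each bidegree — this should follow from the fact that the $L_i$ form a system whose characteristic variety is controlled by the $G_d$ hypothesis, and I would reuse whatever holonomicity input Theorem \ref{thmA} and Theorem \ref{thmB} already require. Once these two kernels are identified and the grading bookkeeping is done, the theorem follows.
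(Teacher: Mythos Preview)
Your proposal has a genuine gap at its core. You claim that $\Hom_D(N, R) \cong H^0_{dR}(N)$ ``because $R = D/D(\partial_1,\ldots,\partial_d)$ and taking $\Hom_D(-,R)$ of a presentation of $N$ computes exactly the kernel of the $\partial_i$ acting on $N$.'' This reasoning has the direction of the $\Hom$ backwards: the kernel $\{w \in N \mid \partial_i \bullet w = 0\ \forall i\}$ is naturally $\Hom_D(R, N)$ (a map out of $D/D(\partial_1,\ldots,\partial_d)$ is determined by the image of $1$, which must be annihilated by each $\partial_i$), not $\Hom_D(N, R)$. What you correctly identify as $\sol(L_1,\ldots,L_d;S)=\Hom_{\TT}(Q,S)$ sits on the \emph{contravariant} side, whereas $H^0_{dR}(Q)=\Hom_{\TT}(S,Q)$ sits on the \emph{covariant} side. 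Passing between them is not an elementary bookkeeping step but a genuine duality statement, and your appeal to an ``integration by parts pairing $S\times Q\to k$'' is a gesture toward such a duality rather than a proof of it.

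The paper supplies exactly this missing step. It first establishes a Koszul resolution of $Q$ by the $L_i$ (Proposition~\ref{koszul-d(Q)}), which hinges on the dimension count $d(Q)=2d+1$ and in turn on the $G_d$ hypothesis via Lemma~\ref{reg}; from this it deduces that ${}^*\Ext_{\TT}^j(Q,\TT)=0$ for $j\neq d$ (Corollary~\ref{vanishing_Ext}). A double-complex spectral sequence (Theorem~\ref{duality}) then yields
\[
{}^*\Ext_{\TT}^i(Q, S) \;\cong\; {}^*\Tor_{d-i}^{\TT}\big({}^*\Ext_{\TT}^d(Q, \TT),\, S\big),
\]
and one computes ${}^*\Ext_{\TT}^d(Q,\TT)\cong \tau(Q)(d)$ from the Koszul resolution, where $\tau$ is the standard transposition. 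Finally the Tor is computed from the Koszul resolution of $S$ by $\partial_1,\ldots,\partial_d$, which is where the de Rham kernel actually appears; the shift $(d)$ cancels against the $(-d)$ from Proposition~\ref{hom}. None of these ingredients --- the Koszul resolution of $Q$, the $\Ext$-vanishing, the transposition $\tau$, or the spectral sequence --- are present in your outline, and without them the identification $\Hom_{\TT}(Q,S)\cong H^0_{dR}(Q)$ is simply asserted, not proved.
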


\begin{thm}[with hypotheses as in Theorem \ref{thmA}]
	\label{thmD}
	Then we have the following isomorphism of bigraded $S$-modules
\[\mathcal{K} \cong \Big\{ w \in H^2_{\m}\left(\frac{S}{(g_1,\ldots,g_{d-2})}\right)(-2, -\nu_d-\nu_{d-1}) \mid g_{d-1} \cdot w=0 \text{ and } g_d \cdot w=0\Big\}.\]
\end{thm}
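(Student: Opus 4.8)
The plan is to compute $\mathcal{K} = \mathcal{I}/(g_1,\ldots,g_d)$ via the chain of symmetric algebras $\sym(E_r) = S/(g_1,\ldots,g_r)$ from \eqref{eq3}, peeling off the generators $g_1, \ldots, g_{d-2}$ and tracking the obstruction in local cohomology, and then handling $g_{d-1}, g_d$ by hand. First I would recall from the discussion after the definition of $G_d$ that $\mathcal{K}' = H^0_{\m}(\sym(I(\nu)))$; untwisting, $\mathcal{K} = H^0_{\m}(\sym(I))$ as a submodule of $\sym(I) = S/(g_1,\ldots,g_d)$, since $\R(I)$ has positive depth along $\m$ (it is a domain, torsion-free over $R$). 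So the target is to identify $H^0_{\m}(S/(g_1,\ldots,g_d))$ with the stated subquotient of $H^2_{\m}(S/(g_1,\ldots,g_{d-2}))$.

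The key input is that $g_1, \ldots, g_d$ behaves like a regular sequence away from a small locus. Concretely, by \ref{equv-Gd-Fitt} the ideal $I_1(\varphi)$ has height $d$, which forces the $g_i$ to form a $d$-sequence (this is the standard fact underlying the acyclicity of the approximation complexes / the $\mathcal{Z}$-complex), and in particular $g_1, \ldots, g_{d-2}$ is a regular sequence on $S$ and $\sym(E_r)$ is Cohen-Macaulay for $r \le d-2$ on the relevant locus. The plan is then: (1) show $H^i_{\m}(S/(g_1,\ldots,g_{d-2})) = 0$ for $i < 2$ — i.e. $\depth \ge 2$ — using that $S/(g_1,\ldots,g_{d-2})$ is Cohen-Macaulay of the right dimension, or directly that $x_1, \ldots$ remain a partial regular sequence; (2) use the short exact sequences $0 \to \sym(E_{r-1})(-1,-\nu_r) \xrightarrow{g_r} \sym(E_{r-1}) \to \sym(E_r) \to 0$ for $r = d-1$ and $r = d$ — these are exact precisely because multiplication by $g_r$ is injective modulo $(g_1,\ldots,g_{r-1})$, which again comes from the $d$-sequence property — and pass to the long exact sequence in $H^\bullet_{\m}$; (3) read off that $H^0_{\m}(S/(g_1,\ldots,g_d))$ fits into an iterated kernel: from the $g_d$ sequence, $H^0_{\m}(\sym(I)) = \ker\!\big(H^1_{\m}(\sym(E_{d-1}))(-1,-\nu_d) \xrightarrow{g_d} H^1_{\m}(\sym(E_{d-1}))\big)$ modulo the image of $H^0_{\m}(\sym(E_{d-1}))$, and then (since $H^0_{\m}(\sym(E_{d-1})) = H^0_{\m}(\sym(E_{d-2})) = 0$ by the depth bound in step (1), and similarly $H^1_{\m}(\sym(E_{d-2})) = 0$) from the $g_{d-1}$ sequence $H^1_{\m}(\sym(E_{d-1})) \cong \ker\!\big(H^2_{\m}(\sym(E_{d-2}))(-1,-\nu_{d-1}) \xrightarrow{g_{d-1}} H^2_{\m}(\sym(E_{d-2}))\big)$. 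Substituting $\sym(E_{d-2}) = S/(g_1,\ldots,g_{d-2})$ and combining the two twists $(-1,-\nu_{d-1})$ and $(-1,-\nu_d)$ into the single twist $(-2, -\nu_{d-1}-\nu_d)$ gives exactly
\[
\mathcal{K} \cong \Big\{ w \in H^2_{\m}\big(S/(g_1,\ldots,g_{d-2})\big)(-2,-\nu_d-\nu_{d-1}) \mid g_{d-1}\cdot w = 0 \text{ and } g_d \cdot w = 0\Big\},
\]
with bigraded structure preserved throughout because every map in sight is bihomogeneous (recall $\bideg g_r = (1,\nu_r)$).

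The main obstacle I expect is justifying the vanishing $H^i_{\m}(S/(g_1,\ldots,g_{d-2})) = 0$ for $i \le 1$ and the injectivity of $g_{d-1}, g_d$ on the successive quotients — that is, the claim that $g_1, \ldots, g_d$ is, up to the last two steps, as good as a regular sequence on $S$. This is where the hypothesis $G_d$ is essential: it is exactly what guarantees (via the height of $I_1(\varphi) = I_1(g)$ and the Artin-Nagata / sliding depth type arguments, or Huneke's theory of $d$-sequences for ideals generated by forms of the same degree) that the approximation complex is acyclic and hence that $\depth \sym(E_r) $ is controlled. One has to be careful that these depth statements hold not just generically but after localizing at $\m$, so that the local cohomology modules genuinely vanish in the claimed range; handling the boundary case where the sequence fails to be regular only at $\m$ itself (which is precisely why $\mathcal{K} \ne 0$, as noted via \cite[(3.7.2)]{KPU}) is the delicate point, and is what confines the nonvanishing to $H^0_{\m}$ and $H^2_{\m}$ rather than spreading across all cohomological degrees.
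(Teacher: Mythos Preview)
Your overall architecture is correct and matches the paper's proof (Theorem \ref{thm1}): identify $\mathcal{K}$ with $H^0_{\m}(\sym(I))$, then climb the tower $\sym(E_r)=S/(g_1,\ldots,g_r)$ via the long exact sequences in $H^\bullet_{\m}$ for $r=d-1,d$, using the vanishing $H^i_{\m}(S/(g_1,\ldots,g_{d-2}))=0$ for $i\le 1$ to land in the kernel inside $H^2_{\m}$.

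Where you diverge from the paper is in the justification of the ``key input,'' and there you are making life harder than necessary. The paper does \emph{not} appeal to $d$-sequences, approximation complexes, or Artin--Nagata conditions. Instead, Lemma \ref{reg} proves directly that $g_1,\ldots,g_d$ is an honest regular sequence in $S$: by the Huneke--Rossi formula, $G_d$ forces $\dim\sym(I)=d+1$, hence $\hgt(g_1,\ldots,g_d)=\dim S-(d+1)=d$ in the Cohen--Macaulay ring $S$, and so the sequence is regular. Your worry about ``the boundary case where the sequence fails to be regular only at $\m$ itself'' is therefore misplaced---the $g_i$ are regular everywhere; the nonvanishing of $\mathcal{K}$ comes from $\mu(I_{\m})=d+1>d$, not from any failure of regularity of the $g_i$.

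Once you have the full regular sequence, the vanishing $H^i_{\m}(S/(g_1,\ldots,g_j))=0$ for $0\le i\le d-j-1$ follows by straightforward induction on $j$ from the long exact sequence attached to $0\to S/(g_1,\ldots,g_{j-1})(-1,-\nu_j)\xrightarrow{g_j}S/(g_1,\ldots,g_{j-1})\to S/(g_1,\ldots,g_j)\to 0$, with base case $H^i_{\m}(S)=0$ for $i\ne d$ (since $S=R\otimes_k U$ is flat over $R$). Your steps (2)--(3) then go through exactly as you describe, and coincide with the paper's derivation of \eqref{eqq1}, \eqref{eqq2}, and the commutative diagram in the proof of Theorem \ref{thm1}.
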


\begin{remark}
We put the extra conditions on $I$ (along with $\hgt I=2$ and $\mu(I)=d+1$), that $I$ is a perfect ideal satisfying $G_d$, 
mainly to prove Theorem \ref{thmD}. Although this result does not look promising, but surprisingly using this we get Theorem \ref{thmA}. Under these extra conditions $g_1, \ldots, g_d$ is a regular sequence in $S$, see Lemma \ref{reg}. This fact helps us to prove Proposition \ref{koszul-d(Q)} which is used to prove Theorem \ref{thmB} and Theorem \ref{thmC}. 
Rest of the results follow by almost similar methods used in \cite{YC}. We keep the name of the sections, statements of the results and all other arrangements as it is in \cite{YC} so that readers can easily do a comparative study.

The article is organized as follows. In Section 2 we prove Theorem \ref{thmD}, in Section 3 we prove Theorem \ref{thmA}, in Section 4 we prove Theorem \ref{thmB} and in Section 3 we prove Theorem \ref{thmC}. In Section 6, we generalize the function given in \cite[Section 6]{YC} that can compute the $b$-function $b_p(s)$ in {\it Macaulay2} and give some examples to show that how it helps us to recover the bi-graded structure of $\mathcal{K}$. In the last section we show that the generalized version can help us to compute $\reltype F(I), \reg F(I), e(F(I))$ and $r(I)$, where $\reltype F(I)$ denotes the relation type of $F(I)$ (see Definition \ref{reltype}), $\reg F(I)$ denotes the regularity of $F(I)$, $e(F(I))$ denotes the Hilbert-Samuel multiplicity of $F(I)$ and $r(I)$ denotes the reduction number of $I$. Using this we can also get a lower bound of $\reltype \R(I)$ and hence $\reg \R(I)$.
\end{remark}

\section{An ``explicit" description of the equations}

\s\label{ass} {\bf Assumptions:} We will prove our results under the following assumptions: 

(i) $R=k[x_1, \ldots, x_d]$ with $\chars k=0$ and $d \geq 3$, 

(ii) $I$ is a perfect ideal of height $2$, satisfies $G_d$ and $I=(f_1,\ldots,f_{d+1})$ with $\deg f_i=\nu$ for all $i$.

\vspace{0.25cm}
Then we have observed in \ref{Std-Ass}, $I$ has a presentation \eqref{eq2}, $\sym(I) \cong S/(g_1, \ldots, g_d)$ where $g_1, \ldots, g_d$ as in \eqref{symeq}, and $S=R[T_1, \ldots, T_{d+1}]$ is a bigraded $k$-algebra with $\bideg(T_i)=(1, 0), ~\bideg (x_j)=(0, 1)$.
 
 \begin{lemma}[with hypotheses as in \ref{ass}]\label{reg}
$g_1,\ldots,g_d$ is a regular sequence in $S$.
  \end{lemma}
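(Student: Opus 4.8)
The plan is to reduce the statement to a dimension computation for $\sym(I)$ and then invoke that $S$ is Cohen--Macaulay. Since $S=k[x_1,\ldots,x_d,T_1,\ldots,T_{d+1}]$ is a polynomial ring over a field it is Cohen--Macaulay, and it is equidimensional and catenary. The elements $g_1,\ldots,g_d$ are homogeneous, they lie in the irrelevant maximal ideal $\M=\m S+(T_1,\ldots,T_{d+1})$, and $\sym(I)\cong S/(g_1,\ldots,g_d)$. Because the minimal primes of the homogeneous ideal $(g_1,\ldots,g_d)S$ are themselves homogeneous, hence contained in $\M$, one has $\hgt(g_1,\ldots,g_d)S=\hgt(g_1,\ldots,g_d)S_{\M}$; and a sequence of elements of $\M S_{\M}$ in the Cohen--Macaulay local ring $S_{\M}$ is a regular sequence as soon as the ideal it generates has height equal to the number of elements \cite[Theorem 2.1.2]{BH}. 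Since each $S/(g_1,\ldots,g_i)$ is a graded module, its associated primes are homogeneous, so an $S_{\M}$-regular sequence of homogeneous elements is automatically $S$-regular. Hence it suffices to prove $\hgt(g_1,\ldots,g_d)S=d$; and as $S$ is equidimensional and catenary this is equivalent to $\dim\sym(I)=\dim S-d=d+1$.

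For the dimension of $\sym(I)$ I would use the general formula $\dim\sym_R(E)=\sup\{\dim(R/\fp)+\mu(E_\fp)\mid\fp\in\Spec R\}$ for a finitely generated module $E$ over a Noetherian ring $R$. The bound $\dim\sym(I)\ge d+1$ is clear by taking $\fp=(0)$, since $I$ has rank one. For the reverse bound I would split $\Spec R$ into three cases. If $I\not\subseteq\fp$ then $I_\fp=R_\fp$, so $\dim(R/\fp)+\mu(I_\fp)=\dim(R/\fp)+1\le d+1$. If $\fp\in V(I)\setminus\{\m\}$ then, by the hypothesis $G_d$, $\mu(I_\fp)\le\dim R_\fp=\hgt\fp=d-\dim(R/\fp)$, so $\dim(R/\fp)+\mu(I_\fp)\le d$. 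Finally, if $\fp=\m$ then $\dim(R/\m)+\mu(I_\m)=0+\mu(I)=d+1$, using that $I$ is minimally generated by $d+1$ elements. Taking the supremum gives $\dim\sym(I)=d+1$, hence $\hgt(g_1,\ldots,g_d)S=d$, and therefore $g_1,\ldots,g_d$ is a regular sequence in $S$.

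The computation is essentially bookkeeping; the one place that uses all the hypotheses and has to be watched is the case $\fp=\m$, where the equality $\mu(I)=d+1=\hgt\m+1$ makes the supremum land exactly on $d+1$ rather than on something larger, while $G_d$ is precisely what controls every other prime in $V(I)$. If one prefers not to cite the symmetric-algebra dimension formula, the same estimate can be obtained by hand: the morphism $V(g_1,\ldots,g_d)\to\Spec R$ has fibre over $\fp$ equal to $\ker\bigl(\varphi(\fp)^{\mathrm t}\bigr)$, of dimension $(d+1)-\rank\varphi(\fp)$, and one bounds $\dim(R/\fp)+(d+1)-\rank\varphi(\fp)$ using $\dim(R/\fp)\le d-\hgt I_{\rank\varphi(\fp)+1}(\varphi)$ together with the inequalities $\hgt I_i(\varphi)>d+1-i$ for $i>1$ and $\hgt I_1(\varphi)=d$ already recorded in \ref{equv-Gd-Fitt}; the maximum is $d+1$. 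Running the same argument with each $\varphi_r$ in place of $\varphi$, using $\rank\varphi_r(\fp)\ge\rank\varphi(\fp)-(d-r)$, even shows $\hgt(g_1,\ldots,g_r)S=r$ for every $r$, which again yields the claim.
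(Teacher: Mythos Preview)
Your proof is correct and follows essentially the same route as the paper: both compute $\dim\sym(I)=d+1$ via the Huneke--Rossi formula $\dim\sym(I)=\max_{\fp}\{\dim R/\fp+\mu(I_\fp)\}$, use $G_d$ to bound the contributions from non-maximal primes, observe that $\fp=\m$ gives $d+1$, and then conclude from the Cohen--Macaulayness of $S$ that a height-$d$ ideal generated by $d$ elements is generated by a regular sequence. Your case split is in fact a bit more careful than the paper's (you separate $\fp\notin V(I)$, where $\mu(I_\fp)=1$, from $\fp\in V(I)\setminus\{\m\}$, where $G_d$ applies), and your final paragraph offering the fibre-dimension alternative is extra but not needed.
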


 \begin{proof}
 By Huneke and Rossi's result (see \cite{HR}) we have \[\dim \sym(I)=\max_{\mathfrak{p} \in \Spec(R)} \{\dim R/\mathfrak{p}+ \mu(I_{\mathfrak{p}})\}.\] Since $I$ satisfies $G_d$ so for any $\mathfrak{p} \neq \m$ we have $\mu(I_{\mathfrak{p}}) \leq \dim R_{\mathfrak{p}}=\hgt \mathfrak{p}$ and hence $\dim R/\mathfrak{p}+ \mu(I_{\mathfrak{p}}) \leq \dim R/\mathfrak{p}+ \hgt \mathfrak{p}=\dim R=d$ (as $R$ is \CM). On the other hand, $\dim R/\m+ \mu(I_{\m})=0+d+1=d+1$. It follows that $\dim \sym(I)=d+1$. Therefore $\hgt (g_1, \ldots, g_d)=d$ (as $S$ is \CM). Hence $g_1,\ldots,g_d$ is a regular sequence in $S$ (see \cite[Theorem 17.4]{Mat}).
 \end{proof}

\s{\bf Generalization of \cite[Lemma 2.3]{YC}}:
We have a short exact sequence \[0 \to S(-1,-\nu_1) \overset{g_1}{\lrt} S \to S/(g_1) \to 0 \] which induces a long exact sequence 
\begin{align*}
\xymatrix@R0.25cm{
0 \ar[r]& H_{\m}^0(S)(-1,-\nu_1) \ar[r]^-{g_1}& H_{\m}^0(S) \ar[r]& H_{\m}^0(S/(g_1)) \\
 && \vdots &\\
  \ar[r]& H_{\m}^{d-1}(S)(-1,-\nu_1) \ar[r]^-{g_1}& H_{\m}^{d-1}(S) \ar[r]& H_{\m}^{d-1}(S/(g_1)) \\ 
 \ar[r]& H_{\m}^d(S)(-1,-\nu_1) \ar[r]^-{g_1}& H_{\m}^d(S)\ar[r]& H_{\m}^d(S/(g_1))\\
 \ar[r]& H_{\m}^{d+1}(S)(-1,-\nu_1) \ar[r]^-{g_1}& H_{\m}^{d+1}(S) \ar[r]& \cdots
}
\end{align*}
Since $R \subseteq S$ is a flat ring extension so we get that
 \begin{equation*}
H^j_{\m S}(S) \begin{cases}
\neq 0 & \text{if } j=d \\
=0  & \text{otherwise},
\end{cases}
\end{equation*}
using the fact that 
 \begin{equation*}
 H^j_{\m}(R) = \begin{cases}
 x_1^{-1}\cdots x_d^{-1}k[x_1^{-1},\ldots,x_d^{-1}]  & \text{if } j=d \\
 0  & \text{otherwise},
 \end{cases}
 \end{equation*}
Thus $H_{\m}^{i}(S/(g_1))=0$ for all $1 \leq i \leq d-2$, and we get an exact sequence 
\begin{equation}\label{e0}
0 \to H_{\m}^{d-1}(S/(g_1)) \overset{\gamma_1}{\lrt} H_{\m}^d(S)(-1,-\nu_1) \overset{g_1}{\lrt} H_{\m}^d(S) \to H_{\m}^d(S/(g_1)) \to 0.
\end{equation} 
Since $g_1, \ldots, g_d$ is a regular sequence so we have short exact sequences 

\begingroup
\allowdisplaybreaks
\begin{align*}
\xymatrix@R0.25cm{
 0 \ar[r]& \frac{S}{(g_1)}(-1,-\nu_2) \ar[r]^-{g_2}& \frac{S}{(g_1)} \ar[r]& \frac{S}{(g_1,g_2)}=\sym(E_2) \ar[r]& 0\\
  0 \ar[r]& \frac{S}{(g_1,g_2)}(-1,-\nu_3) \ar[r]^-{g_3}& \frac{S}{(g_1,g_2)} \ar[r]& \frac{S}{(g_1,g_2,g_3)}=\sym(E_3) \ar[r]& 0\\
  &&\vdots&&\\
   0 \ar[r]& \frac{S}{(g_1,\ldots,g_{d-1})}(-1,-\nu_d) \ar[r]^-{g_d}& \frac{S}{(g_1,\ldots,g_{d-1})} \ar[r]& \frac{S}{(g_1,\ldots,g_{d})}=\sym(I) \ar[r]& 0
}
\end{align*} 
\endgroup

From the induced long exact sequences we get \[H^i_{\m}(S/(g_1,\ldots,g_j))=0 \text{ for all } 0 \leq i \leq d-(j+1) \text{ and } 1 \leq j \leq d-1\] 
and the following exact sequences 
\begingroup
\allowdisplaybreaks
\begin{align}\label{partial-g-rel}
\xymatrix@R0.25cm{
	0 \ar[r]& H^{d-2}_{\m}\left(\frac{S}{(g_1,g_2)}\right) \ar[r]^-{\gamma_2}& H^{d-1}_{\m}\left(\frac{S}{(g_1)}\right)(-1,-\nu_2) \ar[r]^-{g_2}& H^{d-1}_{\m}(\frac{S}{(g_1)}), \\
0 \ar[r]& H^{d-3}_{\m}\left(\frac{S}{(g_1,g_2,g_3)}\right) \ar[r]^-{\gamma_3}& H^{d-2}_{\m}\left(\frac{S}{(g_1,g_2)}\right)(-1,-\nu_3) \ar[r]^-{g_3}& H^{d-2}_{\m}\left(\frac{S}{(g_1,g_2)}\right), \\
	&&\vdots&&\\
	0 \ar[r]& H^0_{\m}\left(\frac{S}{(g_1,\ldots,g_{d})}\right) \ar[r]^-{\gamma_d}& H^1_{\m}\left(\frac{S}{(g_1,\ldots,g_{d-1})}\right)(-1,-\nu_d) \ar[r]^-{g_d}& H^1_{\m}\left(\frac{S}{(g_1,\ldots,g_{d-1})}\right),
}
\end{align}
\endgroup 
where $\gamma_i$'s are the connecting homomorphisms. Hence
\begin{equation}\label{eqq1}
0 \to H^0_{\m}(\sym(I)) \to H_{\m}^1(\sym(E_{d-1}))(-1,-\nu_d) \overset{g_d}{\lrt} H_{\m}^1(\sym(E_{d-1})),
\end{equation}
\begin{equation}\label{eqq2}
0 \to H_{\m}^1(\sym(E_{d-1})) \to H_{\m}^2(\sym(E_{d-2}))(-1,-\nu_{d-1}) \overset{g_{d-1}}{\lrt} H_{\m}^2(\sym(E_{d-2})).
\end{equation}

\begin{theorem}[with hypotheses as in \ref{ass}]\label{thm1}
We have an isomorphism of bi-graded $S$-modules \[\mathcal{K} \cong \Big\{ w \in H^2_{\m}\left(\frac{S}{(g_1,\ldots,g_{d-2})}\right)(-2, -\nu_d-\nu_{d-1}) \mid g_{d-1} \cdot w=0 \text{ and } g_d \cdot w=0\Big\}.\]
\end{theorem}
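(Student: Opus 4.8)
The plan is to identify $\mathcal{K} = H^0_{\m}(\sym(I))$ (recall $\mathcal{K} = \mathcal{K}'$ under the $G_d$ hypothesis, after the degree twist absorbed into the bigrading) and then chase it through the two exact sequences \eqref{eqq1} and \eqref{eqq2} which were assembled from the regular-sequence filtration of $\sym(I)$. From \eqref{eqq1} we have
\[
H^0_{\m}(\sym(I)) \cong \Big\{ z \in H^1_{\m}(\sym(E_{d-1}))(-1,-\nu_d) \;\big|\; g_d \cdot z = 0 \Big\},
\]
so the first step is simply to read off the kernel description of the connecting map $\gamma_d$.

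Next I would feed \eqref{eqq2} into this. The map $\gamma_{d-1}$ of \eqref{eqq2} embeds $H^1_{\m}(\sym(E_{d-1}))$ as the $g_{d-1}$-torsion submodule of $H^2_{\m}(\sym(E_{d-2}))(-1,-\nu_{d-1})$. Composing the two twists gives a total shift of $(-2,-\nu_d-\nu_{d-1})$, matching the statement. The key point to verify is that, under the composite injection
\[
H^0_{\m}(\sym(I)) \hookrightarrow H^1_{\m}(\sym(E_{d-1}))(-1,-\nu_d) \hookrightarrow H^2_{\m}(\sym(E_{d-2}))(-2,-\nu_d-\nu_{d-1}),
\]
the image is exactly the set of $w$ killed by \emph{both} $g_{d-1}$ and $g_d$. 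That $g_{d-1} w = 0$ is automatic because the image lands inside $\im \gamma_{d-1}$, which is precisely $\ker(g_{d-1})$. That $g_d w = 0$ follows because $\gamma_{d-1}$ is $S$-linear and the image of $\mathcal{K}$ inside $H^1_{\m}(\sym(E_{d-1}))(-1,-\nu_d)$ is killed by $g_d$ (from the first step), so its image under $\gamma_{d-1}$ is too. Conversely, any $w$ with $g_{d-1}w = 0$ lies in $\im \gamma_{d-1}$, so $w = \gamma_{d-1}(z)$ for a unique $z$, and $g_d w = \gamma_{d-1}(g_d z) = 0$ forces $g_d z = 0$ by injectivity of $\gamma_{d-1}$, hence $z \in \ker(g_d) = \im \gamma_d$, so $w$ comes from $\mathcal{K}$. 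This back-and-forth is routine once the $S$-linearity of the connecting homomorphisms is in hand.

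The one genuine subtlety — and what I expect to be the main obstacle — is bookkeeping the identification $\mathcal{K} \cong H^0_{\m}(\sym(I))$ together with all the degree shifts so that the final bigrading is stated correctly. In \ref{Std-Ass} and the discussion following the definition of $G_d$, the cited result \cite[(3.7.3)]{KPU} gives $\mathcal{K}' = H^0_{\m}(\sym(I(\nu)))$ with $\mathcal{K}' = \ker(\sym(I(\nu)) \to \R(I))$; one must check that, with the bigrading conventions fixed in \ref{Std-Ass} (where $\bideg g_j = (1,\nu_j)$ and $\sum \nu_i = \nu$), the module $\mathcal{K} = \mathcal{I}/(g_1,\dots,g_d)$ agrees with $H^0_{\m}(\sym(I))$ as bigraded $S$-modules on the nose, with no residual twist. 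Granting that, the two exact sequences \eqref{eqq1}, \eqref{eqq2} — which in turn rest on Lemma \ref{reg} (regularity of $g_1,\dots,g_d$) and the vanishing $H^i_{\m}(S/(g_1,\dots,g_j)) = 0$ for $i \le d-(j+1)$ established just above — deliver the result. I would close by remarking that the composite map is $S$-linear and bihomogeneous of the asserted degree, so the displayed set is genuinely a bigraded $S$-submodule and the isomorphism respects the bigrading.
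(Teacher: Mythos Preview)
Your proposal is correct and follows essentially the same route as the paper: both identify $\mathcal{K}$ with $H^0_{\m}(\sym(I))$ and then chase it through the two exact sequences \eqref{eqq1} and \eqref{eqq2} using the $S$-linearity and injectivity of the connecting maps, arriving at the simultaneous $(g_{d-1},g_d)$-torsion in $H^2_{\m}(S/(g_1,\ldots,g_{d-2}))$ with the composite shift $(-2,-\nu_{d-1}-\nu_d)$. The paper organizes the same chase via a commutative diagram of short exact sequences rather than your direct element-wise kernel argument, but the content is identical.
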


\begin{proof}
	The commutative diagram 
	\begin{align*}
	\xymatrix@R0.5cm{
		\frac{S}{(g_1,\ldots,g_{d-2})}(-2,-\nu_{d-1}-\nu_d)  \ar[r]^-{g_{d-1}} \ar[d]^{g_d} & \frac{S}{(g_1,\ldots,g_{d-2})}(-1, -\nu_d) \ar[d]^{g_d}\\
		\frac{S}{(g_1,\ldots,g_{d-2})}(-1,-\nu_{d-1})  \ar[r]^-{g_{d-1}}  & \frac{S}{(g_1,\ldots,g_{d-2})} }
	\end{align*}
	can be extended to the following commutative diagram with exact rows 
	
	\begin{align*}
	\xymatrix@C0.75cm@R0.5cm{
		0 \ar[r] &\frac{S}{(g_1,\ldots,g_{d-2})}(-2,-\nu_{d-1}-\nu_d)  \ar[r]^-{g_{d-1}} \ar[d]^{g_d} & \frac{S}{(g_1,\ldots,g_{d-2})}(-1, -\nu_d) \ar[d]^{g_d} \ar[r] & \sym(E_{d-1})(-1, -\nu_d) \ar[r] \ar[d]^{g_d}& 0 \\
		0 \ar[r] & 	\frac{S}{(g_1,\ldots,g_{d-2})}(-1,-\nu_{d-1})  \ar[r]^-{g_{d-1}}  & \frac{S}{(g_1,\ldots,g_{d-2})} \ar[r] & \sym(E_{d-1}) \ar[r]& 0}
	\end{align*}
	By \eqref{eqq2} we get the following commutative diagram of exact rows 
	{\small 
	\begin{align*}
	\xymatrix@C0.45cm@R0.5cm{
		0 \ar[r]& H^1_{\m}(\sym(E_{d-1}))(-1, -\nu_{d}) \ar[r]^-{\gamma} \ar[d]^{g_d}& H^2_{\m}\left(\frac{S}{(g_1,\ldots,g_{d-2})}\right)(-2, -\nu_{d-1}-\nu_{d}) \ar[r]^-{g_{d-1}} \ar[d]^{g_d}& H^2_{\m}\left(\frac{S}{(g_1,\ldots,g_{d-2})}\right)(-1, -\nu_{d}) \ar[d]^{g_d}\\
		0 \ar[r]& H^1_{\m}(\sym(E_{d-1})) \ar[r]^-{\gamma}& H^2_{\m}\left(\frac{S}{(g_1,\ldots,g_{d-2})}\right)(-1, -\nu_{d-1}) \ar[r]^-{g_{d-1}}& H^2_{\m}\left(\frac{S}{(g_1,\ldots,g_{d-2})}\right)}
	\end{align*}}
	From the above diagram and from \eqref{eqq1}, we get the exact sequence 
	\begin{align*}
	0 \to \mathcal{K} \to & \ker\left(H^2_{\m}(S/(g_1,\ldots,g_{d-2}))(-2, -\nu_{d-1}-\nu_{d}) \overset{g_d}{\longrightarrow} H^2_{\m}(S/(g_1,\ldots,g_{d-2}))(-1, -\nu_{d-1})\right) \\ &\overset{g_{d-1}}{\longrightarrow} \ker\left(H^2_{\m}(S/(g_1,\ldots,g_{d-2}))(-1, -\nu_{d}) \overset{g_d}{\longrightarrow} H^2_{\m}(S/(g_1,\ldots,g_{d-2})) \right).
	\end{align*}
	From this we get that 	
	\begin{align*}
		\mathcal{K} &\cong \{ w \in H^2_{\m}(S/(g_1,\ldots,g_{d-2}))(-2, -\nu_{d-1}-\nu_{d}) \mid g_{d-1} \cdot w=0 \text{ and } g_d \cdot w=0\}\\
	&\cong \{ w \in H^2_{\m}(S/(g_1,\ldots,g_{d-2}))(-2, -\nu+\sum_{i=1}^{d-2}\nu_i) \mid g_{d-1} \cdot w=0 \text{ and } g_d \cdot w=0\} \quad (\text{as } \sum_{i=1}^{d}\nu_i=\nu).
	\end{align*}
	\end{proof}

\s Recall that $H^d_{\m}(R)$ is an $R$ module via the action
\begin{equation*}
(x_1^{\alpha_1}\cdots x_d^{\alpha_d})\cdot (x_1^{-\beta_1}\cdots x_d^{-\beta_d})= \begin{cases}
x_1^{\alpha_1-\beta_1}\cdots x_d^{\alpha_d-\beta_d} & \text{if } \alpha_i< \beta_i \text{ for all }i \\
0  & \text{otherwise}.
\end{cases}
\end{equation*}

The following result gives us the highest possible $x$-degree for an element in the graded part $\mathcal{K}_{p,*}$.
\begin{corollary}[with hypotheses as in \ref{ass}]\label{vanishing-nonvanishing}
	The following statements hold:
	\begin{enumerate}[{\rm(i)}]
		\item For $p \ge d$ the graded part $\mathcal{K}_{p,*}$ is a finite dimensional $k$-vector space with $\mathcal{K}_{p,u}=0$ for $u > \nu-d$.
		\item $\mathcal{K}_{p,\nu-d} \neq 0$ and $\mathcal{K}_{*,\nu-d}\cong U(-d)$ is an isomorphism of graded $U$-modules.
	\end{enumerate}	
\end{corollary}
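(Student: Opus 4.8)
The plan is to realize $\mathcal{K}$ as a bigraded $S$-submodule of $H^d_\m(S)(-d,-\nu)$ by continuing the chain of exact sequences from the proof of Theorem~\ref{thm1} all the way down to $H^d_\m(S)$, and then to extract both statements from the elementary bigraded structure of $H^d_\m(S)$.

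Concretely, I would first recall that $\mathcal{K}=H^0_\m(\sym(I))=H^0_\m(\sym(E_d))$ --- this is \cite[(3.7.3)]{KPU} in view of the $G_d$ hypothesis (and the identification $\mathcal{K}\cong\mathcal{K}'$ up to the bigrading shift), and is in any case already used in \eqref{eqq1}. Composing the monomorphisms $H^{d-r}_\m(\sym(E_r))\hookrightarrow H^{d-r+1}_\m(\sym(E_{r-1}))(-1,-\nu_r)$ coming from \eqref{e0} and \eqref{partial-g-rel} for $r=d,d-1,\dots,1$ (with $\sym(E_0):=S$), and using $\sum_{i=1}^d\nu_i=\nu$, gives a monomorphism of bigraded $S$-modules
\[\mathcal{K}\ \hookrightarrow\ H^d_\m(S)(-d,-\nu).\]
Now $S=R\otimes_kU$ with $U$ sitting in $x$-degree $0$, so $H^d_\m(S)=H^d_\m(R)\otimes_kU$, and $H^d_\m(R)$ is concentrated in $x$-degrees $\le-d$ with one-dimensional top piece $H^d_\m(R)_{-d}=k\cdot x_1^{-1}\cdots x_d^{-1}$. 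Hence $H^d_\m(S)(-d,-\nu)$ vanishes in $x$-degrees $>\nu-d$, forcing $\mathcal{K}_{p,u}=0$ for $u>\nu-d$. The finite dimensionality in (i) I would get separately and cheaply: $\mathcal{K}_{p,*}=H^0_\m(\sym_p(I))$ is a finitely generated $\m$-torsion $R$-module, hence of finite length.

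For (ii) I would upgrade the previous step to show that the monomorphism $\mathcal{K}\hookrightarrow H^d_\m(S)(-d,-\nu)$ above is onto in $x$-degree $\nu-d$; this gives $\mathcal{K}_{*,\nu-d}\cong H^d_\m(S)(-d,-\nu)_{*,\nu-d}\cong U(-d)$ as graded $U$-modules, so $\mathcal{K}_{p,\nu-d}\cong U_{p-d}\neq0$ for $p\ge d$. The surjectivity I would package as an induction on $r$: writing $c_r:=\nu_1+\cdots+\nu_r-d$ and $N_r:=H^{d-r}_\m(\sym(E_r))$ (so $N_0=H^d_\m(S)$ and $N_d=\mathcal{K}$), one shows that $N_r$ is concentrated in $x$-degrees $\le c_r$ and that $(N_r)_{*,c_r}\cong U(-r)$ as graded $U$-modules. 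The base $r=0$ is the structure of $H^d_\m(S)$ recalled above. For the step, \eqref{e0}/\eqref{partial-g-rel} give $0\to N_r\xrightarrow{\gamma_r}N_{r-1}(-1,-\nu_r)\xrightarrow{g_r}N_{r-1}$, i.e. $N_r=\ker g_r$; by the inductive hypothesis $N_{r-1}(-1,-\nu_r)$ lives in $x$-degrees $\le c_{r-1}+\nu_r=c_r$, hence so does $N_r$; and in $x$-degree exactly $c_r$ the map $g_r$ raises $x$-degree by $\nu_r\ge1$ and therefore lands in the part of $N_{r-1}$ above degree $c_{r-1}$, which is zero. Thus $\gamma_r$ restricts to an isomorphism in $x$-degree $c_r$, and $(N_r)_{*,c_r}\cong\bigl((N_{r-1})_{*,c_{r-1}}\bigr)(-1)\cong U(-r)$. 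Taking $r=d$, where $c_d=\nu-d$, finishes (ii).

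The one genuinely essential input is the positivity $\nu_r=\deg a_{i,r}\ge1$ for every column $r$ of $\varphi$: this is what makes $g_r$ kill the top $x$-degree at each stage, and it holds precisely because \eqref{eq1} is a \emph{minimal} free resolution, so all entries of $\varphi$ lie in $\m$. Everything else is bookkeeping: carrying the bigraded twists correctly through the chain, checking that the identifications of graded pieces are $U$-linear (automatic, since every map in sight is $S$-linear), and noting that the top degree $c_r$ is actually attained (which is exactly the content of $(N_r)_{*,c_r}\cong U(-r)\ne0$). Alternatively one can start from the description of $\mathcal{K}$ in Theorem~\ref{thm1} and iterate \eqref{partial-g-rel} down to $H^d_\m(S)$; the content is identical.
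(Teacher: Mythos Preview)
Your proposal is correct and follows essentially the same strategy as the paper: both compose the connecting homomorphisms $\gamma_d,\ldots,\gamma_1$ to embed $\mathcal{K}$ into $H^d_\m(S)(-d,-\nu)$, read off the vanishing in (i) from the structure of $H^d_\m(R)$, and for (ii) use that each $g_r$ kills the top $x$-degree piece because the entries of $\varphi$ lie in $\m$ (equivalently, $\nu_r\ge1$). The only cosmetic difference is that you package (ii) as a clean induction on $r$ via $N_r$ and $c_r$, whereas the paper computes explicitly that $g_l\cdot\frac{1}{x_1\cdots x_d}=0$ on the socle element and then tracks the resulting isomorphisms degree by degree.
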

\begin{proof}
	(i) Fix $p \geq d$. Set $\mathfrak{d}(j)=\sum_{l=1}^{j}\nu_l$. If \[H^2_{\m}(S/(g_1,\ldots,g_{d-2}))(-2, -\nu+\mathfrak{d}(d-2))_{p,u}=H_{\m}^{2}(S/(g_1,\ldots,g_{d-2})_{p-2,u-\nu+\mathfrak{d}(d-2)}=0,\] then by Theorem \ref{thm1} it follows that $\mathcal{K}_{p,u}=0$. From \eqref{partial-g-rel} and \eqref{e0} we can say that $\gamma_i$ makes a shift of bi-degree $(-1,-\nu_i)$ for all $1 \leq i \leq d$ and so we get 
	{\footnotesize
	\begin{align}\label{eq-x-deg}
	\xymatrix@R0.5cm@C0.5cm{
		0\ar[r]& H_{\m}^{2}(S/(g_1,\ldots,g_{d-2}))_{p-2,u-\nu+\mathfrak{d}(d-2)} \ar@{^{(}->}[r]^-{\gamma_{d-2}}& \cdots \ar@{^{(}->}[r]^-{\gamma_3} & H_{\m}^{d-2}(S/(g_1,g_2))_{p-d+2,u-\nu+\mathfrak{d}(2)} \ar@{^{(}->}[d]^-{\gamma_2}&\\
		&&& H_{\m}^{d-1}(S/(g_1))_{p-d+1,u-\nu+\mathfrak{d}(1)} \ar@{^{(}->}[r]^-{\gamma_1}& H_{\m}^d(S)_{p-d,u-\nu}}.
		\end{align}}
	Note that $S= \bigoplus_{\beta}R {\bf T}^{\beta}$ and $H_{\m}^d(S)=\bigoplus_{\beta}H_{\m}^d(R) {\bf T}^{\beta}$. For any $u> \nu-d$ we have $u-\nu>-d$. So $H_{\m}^d(S)_{(p-d,u-\nu)}=0$ and hence $H_{\m}^{2}(S/(g_1,\ldots,g_{d-2}))_{p-2,u-\nu+\mathfrak{d}(d-2)}=0$. The result follows.
	
(ii) Let $u=\nu-d$. Note that $H_{\m}^d(S)_{p-d,-d}=\bigoplus_{|\beta|=p-d}k \cdot \frac{1}{x_1\cdots x_d} {\bf T}^{\beta}$. Recall that $g_1=\sum_{i=1}^{d+1} a_{i,1}T_i$, with $a_{i,1} \in \m$. Therefore $a_{i,1}=x_1^{\alpha_{i1}}\cdots x_d^{\alpha_{id}}$ with $\alpha_{ij}>0$ for some $j$ if $a_{i,1} \neq 0$. So $a_{i,1} \cdot \frac{1}{x_1\cdots x_d}=0$ for all $1 \leq i \leq {d+1}$ and hence $g_1 \cdot \frac{1}{x_1\cdots x_d}=0$. Similarly we get that $g_l \cdot \frac{1}{x_1\cdots x_d}=0$ for all $2 \leq l \leq d$ ($\star$). Thus from \eqref{e0} it follows that \[H^{d-1}_{\m}(S/(g_1))_{p-d+1, -d+\nu_1} \cong \ker \left( H_{\m}^d(S)_{p-d,-d} \overset{g_1}{\lrt} H_{\m}^d(S)_{p-d+1,-d+\nu_1}\right)= H_{\m}^d(S)_{p-d,-d}.\] Using this fact iteratively we get that 
\begin{align*}
&H^{d-i}_{\m}(S/(g_1, \ldots, g_i))_{p-d+i, -d+\mathfrak{d}(i)} \\
\cong& \ker\left(H_{\m}^{d-i+1}(S/(g_1, \ldots, g_{i-1}))_{p-d+i-1,-d+\mathfrak{d}(i-1)} \overset{g_i}{\lrt} H_{\m}^{d-i+1}(S/(g_1, \ldots, g_{i-1}))_{p-d+i, -d+\mathfrak{d}(i)}\right)\\
=& H_{\m}^{d-i+1}(S/(g_1, \ldots, g_{i-1}))_{p-d+i-1,-d+\mathfrak{d}(i-1)} \left(\cong H_{\m}^d(S)_{p-d,-d}\right)
\end{align*}
for all $1 \leq i \leq d-2$. In particular, $H_{\m}^{2}(S/(g_1,\ldots,g_{d-2})_{p-2,-d+\mathfrak{d}(d-2)} \cong H_{\m}^d(S)_{p-d,-d}$. Again by the similar argument we can say that 
\begin{align*}
\mathcal{K}_{p,\nu-d} &\cong \{w \in H^2_{\m}(S/(g_1,\ldots,g_{d-2}))_{p-2,-d+\mathfrak{d}(d-2)} \mid g_{d-1} \cdot w=0 \text{ and } g_d \cdot w=0\} \\
&\cong \{w \in H_{\m}^d(S)_{p-d,-d} \mid g_{d-1} \cdot w=0 \text{ and } g_d \cdot w=0\}.
\end{align*}
Using ($\star$) we get $\mathcal{K}_{p,\nu-d}\cong H_{\m}^d(S)_{p-d,-d} \neq 0$. Note that $H_{\m}^d(S)_{*,-d}= \bigoplus k \cdot \frac{1}{x_1\cdots x_d} {\bf T}^{\gamma} \cong U(-d)$ as graded $U$-modules. The result follows.
\end{proof}

\begin{remark}\label{first-differ}
Note that $H_{\m}^d(S)$ is $\Z^2$-graded with $H_{\m}^d(S)_{i,*}=0$ for all $i<0$. From \ref{eq-x-deg} we have \[0 \to H_{\m}^{2}(S/(g_1,\ldots,g_{d-2}))_{p-2,u-\nu+\mathfrak{d}(d-2)} \xrightarrow{\gamma_{1} \circ \cdots \circ \gamma_{d-2}} H^d_{\m}(S)_{p-d,u-\nu}.\] Thus $H_{\m}^{2}(S/(g_1,\ldots,g_{d-2}))_{p-2,u-\nu+\mathfrak{d}(d-2)}=0$ and hence $\mathcal{K}_{p,u}=0$ for all $u \geq 0$ when $p<d$. By part (ii) of Corollary \ref{vanishing-nonvanishing} it follows that $\sym(I)$ and $\R(I)$ first differ in degree $d$ for any $d \geq 3$.	
\end{remark}

\section{Translation into \texorpdfstring{$\D$}{D}-modules}
We define $\mathcal{T}$ as a polynomial ring in $d+1$ variables over the Weyl algebra $A_d(k)$, that is, $\mathcal{T}=k[x_1,\ldots,x_d]\langle\partial_1,\ldots,\partial_d\rangle[T_1,\ldots,T_{d+1}]$.

By $\mathcal{F}$ we denote the automorphism ({\it Fourier transform}) on $\mathcal{T}$ defined by 
\[\mathcal{F}(x_i)=\partial_i, \quad \mathcal{F}(\partial_i)=-x_i, \quad \mathcal{F}(T_i)=T_i.\]

Let $S_{\mathcal{F}}$ denote $S$ twisted by $\mathcal{F}$. Then we know that $S_{\mathcal{F}}$ is a $\mathcal{T}$-module, where $t \star b= \mathcal{F}(t) \bullet b$ for any $b \in S_{\mathcal{F}}$ and $t \in \mathcal{T}$. Note that $t \ast b= \mathcal{F}^{-1}(t) \star b= \mathcal{F}(\mathcal{F}^{-1}(t))\bullet b= t \bullet b$ for any $b \in (S_{\mathcal{F}})_{\mathcal{F}^{-1}}$ and $t \in \mathcal{T}$ and hence $(S_{\mathcal{F}})_{\mathcal{F}^{-1}}=S$.  Clearly $S_{\mathcal{F}}$ is also a $S$-module with the same operation (as $S \subseteq \mathcal{T}$ is a subring). For the rest of this section we use the notations $L_i=\mathcal{F}(g_i)$ for $1 \leq i \leq d$. For all $1 \leq i_1< \cdots< i_r \leq d$, define \[\sol(L_{i_1},\ldots,L_{i_r};S)=\{h \in S \mid L_j \bullet h=0 \text{ for all } j=i_1, \ldots, i_r\}.\] By the similar arguments as in the proof of \cite[Lemma 3.9]{YC}, we can show that the $k$-vector space $\sol(L_{i_1},\ldots,L_{i_r};S)$ has a $S$-module structure given by the twisting of the Fourier transform: if $f \in S$ and $h \in \sol(L_{i_1},\ldots,L_{i_r};S)$, then $f \diamond h=\mathcal{F}(f) \bullet h$. Therefore \[\sol(L_{i_1},\ldots,L_{i_r};S)=\{h \in S_{\mathcal{F}} \mid g_j \star h=0 \text{ for all } j=i_1, \ldots, i_r\}\] is a $S$-submodule of $S_{\mathcal{F}}$.

\begin{remark}
For any $\omega \in H^2_{\m}(S/(g_1,\ldots,g_{d-2}))$ we have $g_i \bullet \omega=g_i \cdot \omega$ and so we get that 
\begin{align*}
&\{\omega \in H^2_{\m}(S/(g_1,\ldots,g_{d-2})) \mid g_{d-1} \bullet \omega=0 \text{ and } g_{d} \bullet \omega=0\}\\
=&\{\omega \in H^2_{\m}(S/(g_1,\ldots,g_{d-2})) \mid g_{d-1} \cdot \omega=0 \text{ and } g_d \cdot \omega=0\}.
\end{align*}
\end{remark}

Since $S= \bigoplus_{\beta}R {\bf T}^{\beta}$ and $H_{\m}^d(S)=\bigoplus_{\beta}H_{\m}^d(R) {\bf T}^{\beta}$ so similar way as in \cite[Proposition 3.5]{YC} we can show that, the left $\mathcal{T}$-modules $H_{\m}^d(S)$ and $S$ are cyclic with generators $\frac{1}{x_1\cdots x_d}$ and $1$ respectively. They have the following presentations:
\begin{align}\label{ali-d0}
& 0 \to \mathcal{T}(x_1,\ldots,x_d) \to \mathcal{T} \overset{\bullet \frac{1}{x_1\cdots x_d}}{\lrt} H^d_{\m}(S) \to 0, \\
&  0 \to \mathcal{T}(\partial_1,\ldots,\partial_d) \to \mathcal{T} \overset{\bullet 1}{\lrt}S \to 0.
\end{align}
(as $x_i \cdot 1/x_1\cdots x_d=0$ and $\partial_i \cdot 1/\partial_1\cdots\partial_d=0$ for all $1 \leq i \leq d$). Hence $S \cong \mathcal{T}/\mathcal{T}(\partial_1,\ldots,\partial_d)$ and $H_{\m}^d(S) \cong \mathcal{T}/\mathcal{T}(x_1,\ldots,x_d)$. From the proof of \cite[Theorem 3.11]{YC} (Step-1), we have an induced isomorphism of left $\mathcal{T}$-modules \[\overline{\mathcal{F}}: H_{\m}^d(S) \to S_{\mathcal{F}}.\] The isomorphism satisfies $\overline{\mathcal{F}}(\Pi_{\partial}(z))= \Pi_{x}(\mathcal{F}(z))$ for all $z \in \mathcal{T}$, where
\begin{align*}
\Pi_{x}: \mathcal{T} \to \mathcal{T}/(\partial_1,\ldots,\partial_d) (\cong S) \quad \text{and} \quad \Pi_{\partial}: \mathcal{T} \to \mathcal{T}/(x_1,\ldots,x_d) (\cong H_{\m}^d(S)),
\end{align*}
that is, the following diagram 
\begin{align*}
\xymatrix@R0.5cm{
	\mathcal{T} \ar[r]^{\mathcal{F}} \ar[d]_{\Pi_{\partial}}& \mathcal{T} \ar[d]^{\Pi_x}\\
	\frac{\mathcal{T}}{(x_1,\ldots,x_d)} \ar[r]^{\overline{\mathcal{F}}} & \frac{\mathcal{T}}{(\partial_1,\ldots,\partial_d)}}
\end{align*}
commutes. Since $\mathcal{F}$ is an isomorphism and $z \in \mathcal{T}(x_1,\ldots,x_d)$ if and only if $\mathcal{F}(z) \in \mathcal{T}(\partial_1,\ldots,\partial_d)$ for any $z \in \mathcal{T}$ so $\overline{\mathcal{F}}$ is an isomorphism. Again  $\overline{\mathcal{F}}$ is $\mathcal{T}$ linear which follows from the following claim.

\begin{claim}\label{cl0}
	$\overline{\mathcal{F}}(t \bullet b)=\mathcal{F}(t) \bullet \overline{\mathcal{F}}(b)= t \star \overline{\mathcal{F}}(b)$ for any $t \in \mathcal{T}$ and $b \in \mathcal{T}/(x_1,\ldots,x_d) (\cong H_{\m}^d(S))$.	
\end{claim}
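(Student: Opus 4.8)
The plan is to reduce the identity $\overline{\mathcal{F}}(t \bullet b)=\mathcal{F}(t) \bullet \overline{\mathcal{F}}(b)$ to a statement purely about the Weyl-algebra action on the polynomial quotients $\mathcal{T}/(x_1,\ldots,x_d)$ and $\mathcal{T}/(\partial_1,\ldots,\partial_d)$, using the commutative square relating $\mathcal{F}$, $\Pi_\partial$ and $\Pi_x$ that was just established. First I would write $b=\Pi_\partial(z)$ for some $z \in \mathcal{T}$, which is possible since $\Pi_\partial$ is surjective. Then $t \bullet b = t \bullet \Pi_\partial(z) = \Pi_\partial(tz)$, because $\Pi_\partial$ is a homomorphism of left $\mathcal{T}$-modules (the action on the quotient $\mathcal{T}/(x_1,\ldots,x_d)$ is by left multiplication). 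Applying the commuting square, $\overline{\mathcal{F}}(t \bullet b) = \overline{\mathcal{F}}(\Pi_\partial(tz)) = \Pi_x(\mathcal{F}(tz))$. Since $\mathcal{F}$ is a ring automorphism of $\mathcal{T}$, $\mathcal{F}(tz) = \mathcal{F}(t)\mathcal{F}(z)$, so this equals $\Pi_x(\mathcal{F}(t)\mathcal{F}(z)) = \mathcal{F}(t) \bullet \Pi_x(\mathcal{F}(z))$, again because $\Pi_x$ is left $\mathcal{T}$-linear. Finally, $\Pi_x(\mathcal{F}(z)) = \overline{\mathcal{F}}(\Pi_\partial(z)) = \overline{\mathcal{F}}(b)$ by the square once more, giving $\overline{\mathcal{F}}(t\bullet b) = \mathcal{F}(t)\bullet\overline{\mathcal{F}}(b)$. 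The second equality $\mathcal{F}(t)\bullet\overline{\mathcal{F}}(b) = t \star \overline{\mathcal{F}}(b)$ is then immediate from the very definition of the twisted action $\star$ on $S_{\mathcal{F}}$, namely $t \star c = \mathcal{F}(t)\bullet c$.

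The only genuine point to check carefully — and I expect this to be the single real obstacle, modest as it is — is the assertion that $\Pi_x$ and $\Pi_\partial$ are homomorphisms of \emph{left} $\mathcal{T}$-modules when the targets carry the local-cohomology/polynomial-module structure rather than the naive quotient structure. Concretely one must verify that the action of $\mathcal{T}$ on $S \cong \mathcal{T}/\mathcal{T}(\partial_1,\ldots,\partial_d)$ coming from $\bullet$ (differential operators acting on polynomials) agrees with left multiplication in the quotient ring, and similarly that the action of $\mathcal{T}$ on $H^d_{\m}(S) \cong \mathcal{T}/\mathcal{T}(x_1,\ldots,x_d)$ agrees with left multiplication there. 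This is exactly the content of the two presentations \eqref{ali-d0}: the surjections $\mathcal{T} \to S$, $z \mapsto z \bullet 1$ and $\mathcal{T} \to H^d_{\m}(S)$, $z \mapsto z \bullet \tfrac{1}{x_1\cdots x_d}$ are left $\mathcal{T}$-linear by construction, with kernels $\mathcal{T}(\partial_1,\ldots,\partial_d)$ and $\mathcal{T}(x_1,\ldots,x_d)$ respectively, so $\Pi_x$ and $\Pi_\partial$ are precisely these maps and are therefore automatically $\mathcal{T}$-linear. Once this identification is in hand, the chain of equalities above is a formal consequence of $\mathcal{F}$ being a ring automorphism and of the commuting square, with no computation beyond tracking which module structure is in play at each stage.

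Thus the write-up will be: fix $t \in \mathcal{T}$ and $b \in \mathcal{T}/(x_1,\ldots,x_d)$, choose $z$ with $\Pi_\partial(z) = b$, and run the four-step string
\[
\overline{\mathcal{F}}(t\bullet b) = \overline{\mathcal{F}}(\Pi_\partial(tz)) = \Pi_x(\mathcal{F}(tz)) = \mathcal{F}(t)\bullet\Pi_x(\mathcal{F}(z)) = \mathcal{F}(t)\bullet\overline{\mathcal{F}}(b) = t\star\overline{\mathcal{F}}(b),
\]
citing left $\mathcal{T}$-linearity of $\Pi_\partial$ and $\Pi_x$ (from \eqref{ali-d0}), multiplicativity of $\mathcal{F}$, commutativity of the square, and the definition of $\star$, in that order.
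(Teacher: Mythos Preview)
Your proposal is correct and follows essentially the same approach as the paper: lift $b$ to some $z$ via the surjection $\Pi_\partial$, then chase the identity through the commuting square $\overline{\mathcal{F}}\circ\Pi_\partial=\Pi_x\circ\mathcal{F}$, using that $\mathcal{F}$ is multiplicative and that $\Pi_\partial,\Pi_x$ are left $\mathcal{T}$-linear. Your write-up is in fact a bit cleaner than the paper's, which somewhat informally writes $\Pi_\partial(t)\bullet\Pi_\partial(c)$ for what you correctly phrase as $\Pi_\partial(tc)$ via left $\mathcal{T}$-linearity of the quotient map.
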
 

Recall $\overline{\mathcal{F}}(\Pi_{\partial}(z))= \Pi_{x}(\mathcal{F}(z))$ for all $z \in \mathcal{T}$. Since $\Pi_{\partial}$ is surjective so there exists some $c \in \mathcal{T}$ such that $\Pi_{\partial}(c)=b$. So $\overline{\mathcal{F}}(t \bullet b)= \overline{\mathcal{F}}(\Pi_{\partial}(t) \bullet b)=\overline{\mathcal{F}}(\Pi_{\partial}(t) \bullet \Pi_{\partial}(c))= \overline{\mathcal{F}}\circ \Pi_{\partial}(t \bullet c)= \Pi_{x} \circ \mathcal{F}(t \bullet c)= \Pi_x (\mathcal{F}(t)) \bullet \Pi_x (\mathcal{F}(c))= \mathcal{F}(t) \bullet \overline{\mathcal{F}}\left(\Pi_{\partial}(c)\right)=\mathcal{F}(t) \bullet \overline{\mathcal{F}}(b)=t\star \overline{\mathcal{F}}(b)$. The claim follows.

Using \eqref{e0} we get the following diagram  
\begin{align*}
\xymatrix@R0.5cm@C0.5cm{
	0\ar[r]& H_{\m}^{2}(S/(g_1,\ldots,g_{d-2})) \ar@{^{(}->}[r]^-{\gamma_{d-2}}& \cdots \ar@{^{(}->}[r]^-{\gamma_3} & H_{\m}^{d-2}(S/(g_1,g_2)) \ar@{^{(}->}[r]^-{\gamma_2}& H_{\m}^{d-1}(S/(g_1)) \ar@{^{(}->}[r]^-{\gamma_1}& H_{\m}^d(S) \ar[r]^{\cdot g_1} \ar[d]_{\overline{\mathcal{F}}}& H_{\m}^d(S) \ar[d]^{\overline{\mathcal{F}}} \\
	&&&&& S_{\mathcal{F}}\ar[r]^{\cdot g_1} &S_{\mathcal{F}}}
\end{align*}
which induces an injective map $\theta:= \overline{\mathcal{F}} \circ\gamma_1 \circ \cdots \circ\gamma_{d-2}: H_{\m}^2(S/(g_1,\ldots,g_{d-2})) \to S_{\mathcal{F}}$. 

Following \cite[Notation 3.10]{YC} we shall write $\mathcal{S}= \sol(L_1,\ldots,L_d;S)_{\mathcal{F}}$ to stress the bigraded $S$-module structure induced on $\sol(L_1,\ldots,L_d;S)$ by the twisting of the Fourier transform ${\mathcal{F}}$.

\begin{theorem}[with hypotheses as in \ref{ass}]\label{MT2}
	We have the following isomorphism of bigraded $S$-modules \[\mathcal{K} \cong \mathcal{S}(-d, -\nu+d),\] where $\mathcal{S}=\sol(L_1,\ldots,L_d;S)_{\mathcal{F}}$.
\end{theorem}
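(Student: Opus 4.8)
The plan is to transport the description of $\mathcal{K}$ obtained in Theorem \ref{thm1} across the $\mathcal{T}$-linear isomorphism $\overline{\mathcal{F}}$ and the injection $\theta = \overline{\mathcal{F}} \circ \gamma_1 \circ \cdots \circ \gamma_{d-2}$ constructed above. Recall that Theorem \ref{thm1} identifies $\mathcal{K}$ with
\[
\Big\{ w \in H^2_{\m}\big(S/(g_1,\ldots,g_{d-2})\big)(-2, -\nu_d-\nu_{d-1}) \;\big|\; g_{d-1}\cdot w = 0,\; g_d\cdot w = 0\Big\},
\]
so the first step is to forget the degree twist momentarily and work with the underlying $S$-modules (restoring the bidegree shift at the end is a bookkeeping exercise). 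First I would observe that $\theta$ is injective and $\mathcal{T}$-linear, hence in particular $S$-linear where $S$ acts on the source via $\bullet$ and on the target $S_{\mathcal{F}}$ via $\star$, i.e. $g_i \star (-) = \mathcal{F}(g_i)\bullet(-) = L_i\bullet(-)$. Therefore $\theta$ carries the submodule $\{w : g_{d-1}\cdot w = g_d\cdot w = 0\}$ of $H^2_{\m}(S/(g_1,\ldots,g_{d-2}))$ into the submodule $\{h \in S_{\mathcal{F}} : g_{d-1}\star h = g_d\star h = 0\} = \{h \in S : L_{d-1}\bullet h = L_d\bullet h = 0\}$ of $S_{\mathcal{F}}$.

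The heart of the argument is to show that $\theta$ restricts to an \emph{isomorphism} between these two submodules, and moreover that the target is exactly $\sol(L_1,\ldots,L_d;S)_{\mathcal{F}}$ — that is, that the conditions $L_1\bullet h = \cdots = L_{d-2}\bullet h = 0$ come for free on the image of $\theta$. For the latter: the image of $\gamma_1\circ\cdots\circ\gamma_{d-2}$ lands, by the exact sequences \eqref{partial-g-rel} and \eqref{e0}, inside $\ker(H^d_{\m}(S) \xrightarrow{g_i} H^d_{\m}(S)(\ast))$ for each $i = 1,\ldots,d-2$ (this is precisely what the chain of injections $\gamma_i$ records — each $\gamma_j$ identifies $H^{d-j}_{\m}(S/(g_1,\ldots,g_j))$ with the kernel of multiplication by $g_{j}$ on $H^{d-j+1}_{\m}(S/(g_1,\ldots,g_{j-1}))$). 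Translating through $\overline{\mathcal{F}}$, which sends multiplication by $g_i$ on $H^d_{\m}(S)$ to the action $g_i\star(-) = L_i\bullet(-)$ on $S_{\mathcal{F}}$ by Claim \ref{cl0}, we get that every element of $\im\theta$ is killed by $L_1,\ldots,L_{d-2}$. Conversely, I would argue that $\theta$ maps \emph{onto} $\sol(L_1,\ldots,L_d;S)$: given $h \in S$ with $L_i\bullet h = 0$ for all $i$, the element $\overline{\mathcal{F}}^{-1}(h) \in H^d_{\m}(S)$ is killed by $g_1,\ldots,g_d$; being killed by $g_1$ puts it in the image of $\gamma_1$, and then being killed by $g_2$ (pulled back) puts the preimage in the image of $\gamma_2$, and so on down the chain \eqref{partial-g-rel}, until being killed by $g_{d-1}$ and $g_d$ shows the resulting element of $H^2_{\m}(S/(g_1,\ldots,g_{d-2}))$ satisfies the two remaining annihilation conditions — i.e. it lies in the module of Theorem \ref{thm1}. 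The fact that each $\gamma_j$ is injective with image exactly the relevant kernel is what makes this pullback unambiguous.

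With the isomorphism of underlying $S$-modules in hand, the final step is to track the bidegrees. Theorem \ref{thm1} already carries the twist $(-2, -\nu_d - \nu_{d-1}) = (-2, -\nu + \sum_{i=1}^{d-2}\nu_i)$. Each connecting map $\gamma_j$ contributes a further shift of $(-1, -\nu_j)$ (as noted just before \eqref{partial-g-rel} and used in Corollary \ref{vanishing-nonvanishing}), so $\gamma_1\circ\cdots\circ\gamma_{d-2}$ contributes $(-(d-2), -\sum_{i=1}^{d-2}\nu_i)$; composing with $\overline{\mathcal{F}}$, which is degree-preserving as a map $H^d_{\m}(S) \to S_{\mathcal{F}}$ in the twisted grading, one accumulates a total shift of $(-2 - (d-2), -\nu + \sum_{i=1}^{d-2}\nu_i - \sum_{i=1}^{d-2}\nu_i) = (-d, -\nu)$ relative to $S_{\mathcal{F}}$... but one must be careful: the grading on $\mathcal{S} = \sol(L_1,\ldots,L_d;S)_{\mathcal{F}}$ is the \emph{twisted} one, and reconciling the twist coming from $\mathcal{F}$ with the accumulated shifts is exactly where the $+d$ in the second coordinate of $(-d, -\nu+d)$ appears. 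I expect this degree bookkeeping — making precise how $\overline{\mathcal{F}}$ interacts with the bigrading and confirming the arithmetic yields $(-d,-\nu+d)$ — to be the main place where care is needed; the algebraic content (injectivity, surjectivity onto $\sol$) follows cleanly by chasing \eqref{e0}, \eqref{partial-g-rel}, \eqref{eqq1}, \eqref{eqq2} together with Claim \ref{cl0}, essentially as in \cite[Theorem 3.11]{YC}.
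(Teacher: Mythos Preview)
Your proposal is correct and follows essentially the same route as the paper: transport Theorem~\ref{thm1} through $\theta=\overline{\mathcal{F}}\circ\gamma_1\circ\cdots\circ\gamma_{d-2}$, identify $\im\theta$ with $\sol(L_1,\ldots,L_{d-2};S)_{\mathcal{F}}$ by chasing the exact sequences \eqref{e0}--\eqref{partial-g-rel} in both directions (exactly as you describe), and then do the bidegree bookkeeping. One small correction to your sketch: $\overline{\mathcal{F}}$ is \emph{not} degree-preserving in the $x$-grading --- the paper's Step~2 computes explicitly that $\overline{\mathcal{F}}(x_1^{-\alpha_1}\cdots x_d^{-\alpha_d})$ is a scalar multiple of $x_1^{\alpha_1-1}\cdots x_d^{\alpha_d-1}$, so $\overline{\mathcal{F}}$ is homogeneous of $x$-degree $+d$ (and $T$-degree $0$), and this is precisely the source of the $+d$ you were anticipating; once you use this rather than ``degree-preserving,'' the arithmetic $(-2,-\nu+\sum_{i\le d-2}\nu_i)+(-(d-2),-\sum_{i\le d-2}\nu_i+d)=(-d,-\nu+d)$ goes through cleanly.
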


\begin{proof}
	(Step-1:) Set $\gamma:=\gamma_1 \circ \cdots \circ\gamma_{d-2}$. As each $\gamma_i$ is an injective map so is $\gamma$. For any $\omega \in H^2_{\m}(S/(g_1,\ldots,g_{d-2}))$ we have $\gamma(w) \in H^d_{\m}(S)$ and hence $\gamma(w)= \Pi_{\partial}(z)$ for some $z \in \mathcal{T}$ (as $\Pi_{\partial}$ is a surjective map). Now $g_i \bullet \Pi_{\partial}(z)=g_i \cdot \Pi_{\partial}(z)=g_i \cdot \gamma(w)= \gamma (g_i \cdot w)$ (as $\gamma$ is a $S$-linear map). Since $\gamma$ is an injective map so we get that $g_i \bullet \Pi_{\partial}(z)=0$ if and only if $g_i \cdot w=g_i \bullet w=0$. Thus for any $z \in \mathcal{T}$ we get the following equivalences 
	
	\begingroup
	\allowdisplaybreaks
	\begin{align*}
	\left( {\begin{array}{c}
		g_{d-1} \bullet \Pi_{\partial}(z)=0 \\
		g_d \bullet \Pi_{\partial}(z)=0 \\
		\end{array} } \right)
	&\iff
	\left( {\begin{array}{c}
		g_{d-1}z \in \mathcal{T}(x_1,\ldots,x_d)\\
		g_dz \in \mathcal{T}(x_1,\ldots,x_d) \\
		\end{array} } \right) 
	\iff  
	\left( {\begin{array}{c}
		\mathcal{F}(g_{d-1})\mathcal{F}(z) \in \mathcal{T}(\partial_1,\ldots,\partial_d)\\
		\mathcal{F}(g_d)\mathcal{F}(z) \in \mathcal{T}(\partial_1,\ldots,\partial_d) \\
		\end{array} } \right) \\
	&\iff
	\left( {\begin{array}{c}
		L_{d-1} \bullet \Pi_{x}({\mathcal{F}(z)})=0 \\
		L_d \bullet \Pi_{x}({\mathcal{F}(z)})=0 \\
		\end{array} } \right) 
	\iff  
	\left( {\begin{array}{c}
		L_{d-1} \bullet \overline{\mathcal{F}}(\Pi_{\partial}(z))=0 \\
		L_d \bullet \overline{\mathcal{F}}(\Pi_{\partial}(z))=0 \\
		\end{array} } \right). 
	\end{align*}
	\endgroup
		Recall that $\theta:=\overline{\mathcal{F}} \circ \gamma:H^2_{\m}(S/(g_1, \ldots, g_{d-2})) \to S_{\mathcal{F}}$ is an injective map. Since $\gamma$ is $S$-linear and $\overline{\mathcal{F}}$ is $\mathcal{T}$-linear so $\theta$ is $S$-linear and hence $\im \theta$ is a $S$-submodule of $S_{\mathcal{F}}$.
		Due to the above observation $\theta$ induces an isomorphism of $S$-modules 
	
		\begin{equation}\label{eqqq1}
		\begin{split}
	&\{\omega \in H^2_{\m}(S/(g_1,\ldots,g_{d-2})) \mid g_{d-1} \bullet \omega=0 \text{ and } g_d \bullet \omega=0\}\\
	\cong_S& \{b \in \im \theta \mid g_{d-1} \star b=0, g_d \star b=0\}.
	   \end{split}
		\end{equation}
		(as $b=\theta(\omega)$ for some $\omega \in H^2_{\m}(S/(g_1))$). We claim that $\im \theta =(0:_{S_{\mathcal{F}}} (g_1, \ldots, g_{d-2}))$. Note that $g_i \cdot \gamma(\omega)=\gamma_1(\cdots\gamma_{i-1}(g_i \cdot \gamma_i(\omega_i))\cdots)=0$ for all $2 \leq i \leq d-2$ and $g_1 \cdot \gamma_1(w_1)=0$ where $w_i=\gamma_{i+1}\circ \cdots \circ \gamma_{d-2}(\omega)$ for all $1 \leq i \leq d-2$ (as $\gamma_i$ is $S$-linear, $g_i \in S$ and $g_i \cdot \gamma_i=0$). Thus for any $\omega \in H^2_{\m}(S/(g_1, \ldots, g_{d-2}))$ we get that $g_i \star \theta(\omega)=\overline{\mathcal{F}}(g_i \cdot \gamma(\omega))=0$ for all $1 \leq i \leq d-2$ and hence $(g_1, \ldots, g_{d-2}) \subseteq (0:_S H^2_{\m}(S/(g_1, \ldots, g_{d-2})))$. So $(0:_{S_{\mathcal{F}}} \im \theta) \supseteq \mathcal{F}((g_1, \ldots, g_{d-2}
		))=(\mathcal{F}(g_1), \ldots, \mathcal{F}(g_{d-2}))=(L_1, \ldots, L_{d-2})$. Thus $\im \theta \subseteq (0:_{S_{\mathcal{F}}} (g_1, \ldots, g_{d-2}))$. On the other hand, for any $y \in (0:_{{S}_{\mathcal{F}}} (g_1, \ldots, g_{d-2})) \subseteq S_{\mathcal{F}}$, there exists some $a \in H^d_{\m}(S)$ such that $\overline{\mathcal{F}}(a)=y$ (as $\overline{\mathcal{F}}$ is onto). Note that $0=g_1 \star y=\mathcal{F}(g_1)\bullet \overline{\mathcal{F}}(a)= \overline{\mathcal{F}}(g_1 \cdot a)$ (by Claim \ref{cl0}) and hence $g_1 \cdot a=0$ (as $\overline{\mathcal{F}}$ is one-one). So $a \in \ker \left(H_{\m}^d(S) \overset{g_1}{\lrt} H_{\m}^d(S) \right)= \im \gamma_1= \gamma_1(H^{d-1}_{\m}(S/(g_1)))$. Let $a=\gamma_1(a_1)$ for some $a_1 \in H^{d-1}_{\m}(S/(g_1))$. Now $0=g_2 \star y=\mathcal{F}(g_2) \bullet \overline{\mathcal{F}}(a)=\mathcal{F}(g_2) \bullet \overline{\mathcal{F}}(\gamma_1(a_1))=\overline{\mathcal{F}}(g_2 \cdot\gamma_1(a_1))=\overline{\mathcal{F}} \circ \gamma_1(g_2 \cdot a_1)$ (as $\gamma_1$ is $S$-linear). Since both $\overline{\mathcal{F}}$ and $\gamma_1$ are one-one so we have $g_2 \cdot a_1=0$. Thus $a_1 \in \ker \left(H_{\m}^{d-1}(S/(g_1)) \overset{g_2}{\lrt} H_{\m}^{d-1}(S/(g_1)) \right)= \im \gamma_2= \gamma_2(H^{d-2}_{\m}(S/(g_1,g_2)))$. Therefore $a \in \im \gamma_1 \circ \gamma_2$. Proceeding in this way we get that $a \in \im \gamma$ and hence $y \in \im \theta$. Thus $(0:_{{S}_{\mathcal{F}}}(g_1,\ldots,g_{d-2})) \subseteq \im \theta$. The claim follows. 

		Hence from \eqref{eqqq1} it follows that 
		\begin{align*}
		& \{\omega \in H^2_{\m}(S/(g_1,\ldots,g_{d-2})) \mid g_{d-1} \bullet \omega=0 \text{ and } g_d \bullet \omega=0\}\\
		& \cong_S \{b \in S_{\mathcal{F}} \mid g_i \star b=0 \text{ for all } i=1, \ldots, d\}\\
		& = \{b \in S \mid L_i \bullet b=0 \text{ for all } i=1, \ldots, d\}_{\mathcal{F}}=\sol(L_1,\ldots,L_d;S)_{\mathcal{F}}.
		\end{align*}

	\noindent
	(Step-2:) From the definition of $\mathcal{F}$ we have that $\overline{\mathcal{F}}$ is homogeneous of degree $0$ on $T_i$'s. Since \[\partial^{\alpha_1-1}\cdots\partial_d^{\alpha_d-1} \bullet \frac{1}{x_1\cdots x_d}=(-1)^{\alpha_1+\cdots+\alpha_d} \frac{(\alpha_1-1)!\cdots(\alpha_d-1)!}{x_1^{\alpha_1} \cdots x_d^{\alpha_d}}\] so we have
	\[\frac{1}{x_1^{\alpha_1} \cdots x_d^{\alpha_d}}=(-1)^{\alpha_1+\cdots+\alpha_d}\frac{\partial_1^{\alpha_1-1}\cdots\partial_d^{\alpha_d-1}}{(\alpha_1-1)!\cdots(\alpha_d-1)!} \bullet \frac{1}{x_1\cdots x_d}\in H_{\m}^d(R).\] and makes a shift degree of $d$ in the $x_i$'s. By \eqref{ali-d0} it follows that the map $\Pi_{\partial}: \mathcal{T} \to \mathcal{T}/(x_1,\ldots,x_d)$ is defined by $z \mapsto z \bullet \frac{1}{x_1\cdots x_d}$. Thus we get that 
	\begingroup
	\allowdisplaybreaks
	\begin{align*}
	\overline{\mathcal{F}}\left(\frac{1}{x_1^{\alpha_1} \cdots x_d^{\alpha_d}}\right)&= \overline{\mathcal{F}}\left(\Pi_{\partial}\left((-1)^{\alpha_1+\cdots+\alpha_d}\frac{\partial_1^{\alpha_1-1}\cdots\partial_d^{\alpha_d-1}}{(\alpha_1-1)!\cdots(\alpha_d-1)!}\right)\right)\\
	&=\mathcal{F}\left((-1)^{\alpha_1+\cdots+\alpha_d}\frac{\partial^{\alpha_1-1}\cdots\partial_d^{\alpha_d-1}}{(\alpha_1-1)!\cdots(\alpha_d-1)!}\right)\\
	&=(-1)^{\alpha_1+\cdots+\alpha_d}\frac{x_1^{\alpha_1-1}\cdots x_d^{\alpha_d-1}}{(\alpha_1-1)!\cdots(\alpha_d-1)!} \in R.
	\end{align*}
	\endgroup
	Hence we have that $\mathcal{F}$ makes a shift degree of $d$ in $x_i$'s. Moreover, from \eqref{e0} we can say that $\gamma_i$ makes a shift of degree $-1$ in $T_j$'s and of degree $-\nu_i$ in $x_j$'s for all $i=1, \ldots, d$. Therefore $\theta=\overline{\mathcal{F}} \circ \gamma$ makes a shift of degree $-\sum_{i=1}^{d-2}\nu_i+d$ in $x_j$'s and of degree $-(d-2)=-d+2$ in $T_j$'s. Adding the shift degree $(-d+2,-\sum_{i=1}^{d-2}\nu_i+d)$ to Theorem \ref{thm1} we get the result. 
\end{proof}

Cid proved \cite[Proposition 3.13]{YC} observing that the way used in \cite[Chapter 6, Theorem 1.2]{SCC} can be used to prove his result. He also used \cite[Subsection 3.1]{YC}. Both of them are independent of the number of variables and number of equations in a system of differential equations. So in the same way we get the following result.
\begin{proposition}[with hypotheses as in \ref{ass}]\label{hom}
We have the following isomorphism of graded $U$-modules \[\mathcal{K} \cong {}^*\Hom_{\mathcal{T}}(\mathcal{T}/\mathcal{T}(L_1,\ldots,L_d), S)(-d).\]
\end{proposition}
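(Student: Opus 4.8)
The plan is to deduce this from Theorem~\ref{MT2} together with the standard identification of the solution space of a cyclic left $\mathcal{T}$-module with a $\Hom$-module, exactly as in \cite[Proposition 3.13]{YC} (which follows \cite[Chapter 6, Theorem 1.2]{SCC}); the one thing to watch is the grading by $T$-degree, under which $U=k[T_1,\ldots,T_{d+1}]$ is central in $\mathcal{T}$. First I would fix conventions: regard $\mathcal{T}$ as $\mathbb{Z}$-graded by $T$-degree, so $A_d(k)=\mathcal{T}_0$ and each $T_i$ is central of degree $1$. Then $S\cong\mathcal{T}/\mathcal{T}(\partial_1,\ldots,\partial_d)$ is a graded left $\mathcal{T}$-module; moreover each $L_i=\mathcal{F}(g_i)=\sum_j\mathcal{F}(a_{i,j})T_j$ is homogeneous of $T$-degree $1$, so $\mathcal{T}(L_1,\ldots,L_d)$ is a graded left ideal and $Q=\mathcal{T}/\mathcal{T}(L_1,\ldots,L_d)$ is graded, cyclic and generated in degree $0$. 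Since $U$ is central in $\mathcal{T}$, the graded Hom-module ${}^*\Hom_{\mathcal{T}}(Q,S)$ (the direct sum over all $T$-degrees of the homogeneous $\mathcal{T}$-linear maps $Q\to S$) carries a natural graded $U$-module structure via $(u\cdot\phi)(m)=u\cdot\phi(m)$.

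The key step is to show that $\phi\mapsto\phi(\overline{1})$ is an isomorphism of graded $U$-modules
\[
{}^*\Hom_{\mathcal{T}}(Q,S)\ \xrightarrow{\ \sim\ }\ \sol(L_1,\ldots,L_d;S).
\]
Indeed a $\mathcal{T}$-linear map out of the cyclic module $Q$ is determined by $h:=\phi(\overline{1})\in S$, and it is well defined precisely when $\mathcal{T}(L_1,\ldots,L_d)$ annihilates $h$, i.e.\ $L_i\bullet h=0$ for all $i$; this is the adjunction of \cite[Chapter 6, Theorem 1.2]{SCC}, which holds for any left ideal and any number of generators. I would then verify the two structural compatibilities: (a) $U$-linearity --- for $u\in U$ one has $(u\phi)(\overline{1})=u\cdot\phi(\overline{1})$ by centrality, which matches the action $T_i\diamond h=\mathcal{F}(T_i)\bullet h=T_i\cdot h$ defining the $U$-module structure on $\sol(L_1,\ldots,L_d;S)$, since $\mathcal{F}$ fixes the $T_i$; and (b) compatibility with the $T$-grading --- $\phi$ is homogeneous of $T$-degree $n$ iff $h\in S_{n,*}$, and since every $h\in S$ has only finitely many $T$-homogeneous components, each of which is again a solution because each $L_i$ is $T$-homogeneous, the ordinary $\Hom$ coincides with ${}^*\Hom$ here. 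Hence $\sol(L_1,\ldots,L_d;S)\cong{}^*\Hom_{\mathcal{T}}(Q,S)$ as graded $U$-modules.

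Finally I would invoke Theorem~\ref{MT2}: $\mathcal{K}\cong\mathcal{S}(-d,-\nu+d)$ as bigraded $S$-modules, where $\mathcal{S}=\sol(L_1,\ldots,L_d;S)_{\mathcal{F}}$. Because $\mathcal{F}$ fixes every $T_i$, the Fourier twist alters neither the $T$-grading nor the $U$-module structure, so as a graded $U$-module $\mathcal{S}$ is just $\sol(L_1,\ldots,L_d;S)$; and on passing to graded $U$-modules (remembering only the $T$-grading) the bidegree shift $(-d,-\nu+d)$ collapses to the single shift $(-d)$. Composing with the isomorphism of the previous paragraph gives $\mathcal{K}\cong{}^*\Hom_{\mathcal{T}}(\mathcal{T}/\mathcal{T}(L_1,\ldots,L_d),S)(-d)$ as graded $U$-modules. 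The adjunction itself is entirely formal; the only part that genuinely needs care is the bookkeeping in this last paragraph, namely checking that the bigraded $S$-module isomorphism of Theorem~\ref{MT2} restricts to precisely the asserted graded $U$-module isomorphism once the Fourier twist and the two gradings are tracked correctly.
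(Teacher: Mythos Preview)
Your proposal is correct and follows essentially the same route the paper indicates: the paper does not spell out a proof but simply points to \cite[Proposition 3.13]{YC} and \cite[Chapter 6, Theorem 1.2]{SCC}, noting that the argument there is independent of the number of variables and equations, and combines this with Theorem~\ref{MT2}. You have faithfully unpacked exactly that argument---the cyclic-module adjunction $\Hom_{\mathcal T}(Q,S)\cong\sol(L_1,\ldots,L_d;S)$, the observation that $\mathcal F$ fixes the $T_i$ so the $U$-structure and $T$-grading are unaffected by the Fourier twist, and the collapse of the bidegree shift $(-d,-\nu+d)$ to $(-d)$ upon forgetting the $x$-grading.
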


\section{The bigraded structure of \texorpdfstring{$\mathcal{K}$}{K} and its relation with \texorpdfstring{$b$}{b} functions}

\begin{notation}\label{not}
Fix the integers $p \geq d, m=\binom{p}{d}$ and $n=\binom{p+1}{d}$, where $\dim R=d \geq 3$. Note that if $p=d$, then $m=1, n=d+1$, i.e., $m=\#\{{\bf T}^{\beta} \mid |\beta|=0\}$ and $n=\#\{{\bf T}^{\beta} \mid |\beta|=1\}$; if $p=d+1$, then $m=d+1, n=\binom{(d+1)+1}{d}=\binom{(d+1)+(2-1)}{(d+1)-1}$, i.e., $m=\#\{{\bf T}^{\beta} \mid |\beta|=1\}$ and $n=\#\{{\bf T}^{\beta} \mid |\beta|=2\}$.

The graded part $\mathcal{S}_{p-d,*}$ is given as the solution set of the system of differential equations
\[V=\{h =(h_1,\ldots,h_m) \in R^m \mid [L_i] \bullet h=0 \text{ for all }i=1, \ldots d\},\] where $[L_i] \in D^{n \times m}$ is an $n \times m$ matrix with entries in $D$ and induced by restricting $L_i$ to the monomials ${\bf T}^{\beta}$ of degree $|\beta|=p-d$. We define a new matrix $H \in D^{dn \times m}$ defined by 
{\small
\[
H=\left(
\begin{array}{c}
L_1\\
\hline
L_2\\
\hline
\vdots\\
\hline
L_d
\end{array}\right)
.\]}
Then we can write $V=\{h =(h_1,\ldots,h_m) \in R^m \mid H \bullet h=0\}$.Set $N=D^{dn} \cdot H$. Clearly $N \subset D^m$ is a left $D$-module and image of the map $\phi: D^{dn} \to D^m$ induced by $H$. Set $M=D^m/N$. Following the same way as in \cite[Theorem 1.2]{SCC} we can show that $V \cong \Hom_{D}(M,R^m)$ as $k$ vector spaces. 
\end{notation}
\begin{example}\label{eg-not}
For $d=3$ we have $m=\binom{p}{3}$ and $n=\binom{p+1}{3}$. In this case we compute the system of differential equations when $p=2$ and $p=3$. Suppose 
\begin{align*}
L_1&= a_1T_1+a_2+T_2+a_3T_3+a_4T_4,\\
L_2&= b_1T_1+b_2+T_2+b_3T_3+b_4T_4,\\
L_3&= c_1T_1+c_2+T_2+c_3T_3+c_4T_4.
\end{align*}
For $p=3$, we have $h=(h_1) \in S_0=R$, and the equations $L_1 \bullet h=0, L_2 \bullet h=0$ and $L_3 \bullet h=0$ can be expressed as 
{\small
\begin{align*}
&\left(
\begin{array}{c}
a_1\\
a_2\\
a_3\\
a_4
\end{array}\right) \bullet (h_1)=0, \quad
\left(
\begin{array}{c}
b_1\\
b_2\\
b_3\\
b_4
\end{array}\right) \bullet (h_1)=0 \quad \text{ and } \quad 
\left(
\begin{array}{c}
c_1\\
c_2\\
c_3\\
c_4
\end{array}\right) \bullet (h_1)=0.
\end{align*}}
So in this case, $N=D(a_1,a_2,a_3,a_4,b_1,b_2,b_3,b_4,c_1,c_2,c_3,c_4)$. 

For $p=4$, we have $h=(h_1,h_2,h_3,h_4) \in S_1=R T_1+RT_2+RT_3+RT_4$. Note that $f=h_1T_2+h_2T_2+h_3T_3+h_4T_4 \in \mathcal{S}_{1,*}$ if $L_i \bullet f=0$ for all $i=1,2,3$. Now $L_1 \bullet f=0$ implies that $\left(\sum_{i=1}^4 a_i T_i\right) \bullet \left(\sum_{j=1}^4 h_j T_j\right)=0$, that is, $(a_1h_1) T_1^2+(a_2h_1+a_1h_2)T_1T_2+(a_3h_1+a_1h_3)T_1T_3+ \cdots + (a_4h_4) T_4^2=0$. Hence $a_1h_1=0, ~a_2h_1+a_1h_2=0, ~a_3h_1+a_1h_3=0, \ldots, a_4h_4=0$. Thus shorting the monomials ${\bf T}^{\beta}$ in lexicographical order, the equations $L_1 \bullet h=0$
can be expressed as 
\end{example}
{\small
\[
\begin{blockarray}{ccccc}	
\begin{block}{ccccc}
& T_1 & T_2 & T_3&T_4\\	
\end{block}
\begin{block}{c(cccc)}
T_1^2 & a_1 & 0 & 0 &0\\
T_1T_2 & a_2 & a_1 & 0 & 0\\			
T_1T_3 & a_3 & 0 & a_1 & 0\\	
T_1T_4 & a_4 & 0 & 0 & a_1\\		
T_2^2 & 0 & a_2 & 0 & 0\\			
T_2T_3 & 0 & a_3 & a_2 & 0\\	
T_2T_4 & 0 & a_4 & 0 & a_2\\		
T_3^2 & 0 & 0 & a_3& 0\\
T_3T_4 & 0 & 0 & a_4 & a_3\\
T_4^2 & 0 & 0 & 0& a_4\\
\end{block}			
\end{blockarray}
\bullet
\left(
\begin{array}{c}
h_1\\
h_2\\
h_3\\
h_4
\end{array}
\right)
= 0\]}
Replacing $a_i$ by $b_i$ (resp. $c_i$) we get the expression for the equation $L_2 \bullet h=0$ (resp. $L_3 \bullet h=0$).

\s Let $A \in D^{r\times s}$ be an $r\times s$ matrix with entries in $D$.
Multiplying with $A$ gives us a map $D^r \xrightarrow{\cdot A} D^s$ of left $D$-modules. Applying $\Hom_D(-,D)$ to this map induces the map ${(D^s)}^T \xrightarrow{A\cdot} {(D^r)}^T$ of right $D$-modules.

We have an equivalence between the category of left $D$-modules and the category of right $D$-modules, given by the algebra involution $\tau: D \to D$ defined by $f\partial^{\alpha} \mapsto (-1)^{|\alpha|}\partial^{\alpha} f$, where $f \in R$. The map $\tau$ is called the {\it standard transposition}.

Given a left $D$-module $D^r/M_0$, it can be checked that
\[\tau\Big(\frac{D^r}{M_0}\Big) = \frac{D^r}{\tau(M_0)},\] where $\tau(M_0)=\{\tau(L) \mid L \in M_0\}$.

\begin{proposition}\label{M-holo}
The left $D$-module $M=D^m/N$ is holonomic. 
\end{proposition}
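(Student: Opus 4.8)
The plan is to bound the dimension of the characteristic variety of $M$ and then invoke Bernstein's inequality: a finitely generated left $D$-module is holonomic exactly when the Krull dimension of the associated graded module for a good filtration is at most $d$. I would use the Bernstein filtration $B_\bullet D$, so that $\gr D\cong k[x_1,\dots,x_d,\xi_1,\dots,\xi_d]$ with $\xi_i$ the symbol of $\partial_i$. Since $M=D^m/N$ is visibly finitely generated, everything reduces to exhibiting a good filtration whose associated graded has dimension $\le d$.

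The key structural observation is that the matrix $H$ has all of its entries in the commutative polynomial subring $B:=k[\partial_1,\dots,\partial_d]\subseteq D$: indeed $L_i=\mathcal F(g_i)=\sum_j\mathcal F(a_{j,i})T_j$, and each $\mathcal F(a_{j,i})$ is a homogeneous polynomial of degree $\nu_i$ in $\partial_1,\dots,\partial_d$ with no occurrence of the $x$'s, while the entries of the blocks $[L_i]$ are among the $\mathcal F(a_{j,i})$ and $0$. Let $K\subseteq B^m$ be the $B$-submodule generated by the rows of $H$ and set $\overline M:=B^m/K$, a finitely generated module over the commutative Noetherian ring $B$. Because $D$ is free (hence flat) as a \emph{right} $B$-module -- every element of $D$ has a unique PBW expression $\sum_\alpha x^\alpha q_\alpha(\partial)$ -- applying $D\otimes_B(-)$ to $0\to K\to B^m\to\overline M\to 0$ identifies $M=D^m/N$ with $D\otimes_B\overline M$. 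I would then take the good filtration $F_kM=\sum_i (B_kD)\cdot(1\otimes\bar u_i)$ attached to a finite set of $B$-module generators $\bar u_1,\dots,\bar u_m$ of $\overline M$, and carry out the routine symbol estimate: any homogeneous $f(\partial)\in\ann_B(\overline M)$ kills every generator $1\otimes\bar u_i$, so a commutator argument shows its symbol $f(\xi)$ annihilates $\gr M$. Hence $\ann_{\gr D}(\gr M)$ contains the extension of $\mathcal F(\ann_B(\overline M))$, and therefore
\[
\operatorname{Ch}(M)\ \subseteq\ \mathbb{A}^d_x\times\Supp_B(\overline M),
\qquad
\dim\operatorname{Ch}(M)\ \le\ d+\dim_B\overline M .
\]

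It then remains to prove $\dim_B\overline M=0$, i.e.\ that $\overline M$ has finite length. A prime $\mathfrak q$ of $B$ lies in $\Supp_B(\overline M)$ iff the rows of $H$ fail to generate $(B_{\mathfrak q})^m$, iff (Nakayama) the specialized matrix $H(\mathfrak q)$ over the residue field $\kappa(\mathfrak q)$ has a nonzero kernel vector $\mathbf v=(v_\beta)_{|\beta|=p-d}$. Reading off the combinatorial shape of the blocks $[L_i]$, this kernel condition says precisely that $\tilde g_i\cdot\mathbf v=0$ in $\kappa(\mathfrak q)[T_1,\dots,T_{d+1}]$ for every $i$, where $\tilde g_i=\sum_j\mathcal F(a_{j,i})(\mathfrak q)\,T_j$ is a linear form with scalar coefficients; since $\kappa(\mathfrak q)[T]$ is a domain and $\mathbf v\ne 0$ (here $p\ge d$, so degree-$(p-d)$ forms exist), this forces $\tilde g_1=\dots=\tilde g_d=0$, i.e.\ $\mathfrak q\in V(\mathcal F(I_1(\varphi)))$. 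But the hypothesis $G_d$ yields $\hgt I_1(\varphi)=d=\dim R$ by \ref{equv-Gd-Fitt}, and since $I_1(\varphi)\subseteq\m$ (all $a_{i,j}$ have positive degree) we get $\sqrt{I_1(\varphi)}=\m$, hence $\sqrt{\mathcal F(I_1(\varphi))}=(\partial_1,\dots,\partial_d)$. Thus $\Supp_B(\overline M)$ is contained in the single closed point $(\partial_1,\dots,\partial_d)$, so $\overline M$ is a finite-length (indeed $(\partial)$-power-torsion) $B$-module, $\dim\operatorname{Ch}(M)\le d$, and $M$ is holonomic.

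I expect the main friction to be in the middle step: realizing $M$ cleanly as $D\otimes_B\overline M$ and transporting $\ann_B(\overline M)$ down to the symbol level through a good filtration, all while keeping the Fourier transform and the left/right conventions built into $H$ and $N$ straight. Once that is set up, the finiteness of $\overline M$ is a short computation whose only substantive input is the $G_d$ hypothesis entering through $\hgt I_1(\varphi)=d$, and the conclusion is then immediate from Bernstein's inequality.
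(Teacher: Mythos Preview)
Your argument is correct and takes a genuinely different route from the paper. The paper identifies $\tau(M)$ with $\Ext_D^d(Q_p,D)$ by restricting the Koszul complex on $L_1,\dots,L_d$ over $\mathcal T$ to $T$-degree $p$ (obtaining a free $D$-resolution of $Q_p$), dualizing, and reading off the cokernel of the last map; holonomicity then comes from Bj\"ork's bound $d(\Ext_D^j(-,D))\le 2d-j$ together with Bernstein's inequality. That argument needs the exactness of the Koszul complex---equivalently that $g_1,\dots,g_d$ form a regular sequence---which is established only later in Proposition~\ref{koszul-d(Q)}, so the paper's proof carries a forward reference and uses the full force of $G_d$. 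Your approach is more elementary and self-contained: you exploit that every entry of $H$ lies in the commutative subring $B=k[\partial_1,\dots,\partial_d]$, realize $M\cong D\otimes_B\overline M$ via right-$B$-freeness of $D$, and reduce everything to the zero-dimensionality of $\overline M$ over $B$, for which only the single consequence $\hgt I_1(\varphi)=d$ of $G_d$ (from \ref{equv-Gd-Fitt}) is needed. Once $\overline M$ has finite length you can even bypass the characteristic-variety computation: a $B$-composition series for $\overline M$ with quotients $B/(\partial_1,\dots,\partial_d)$ becomes, after applying the exact functor $D\otimes_B(-)$, a finite filtration of $M$ with holonomic subquotients $D/D(\partial_1,\dots,\partial_d)\cong R$. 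One small slip to fix: where you write ``the extension of $\mathcal F(\ann_B(\overline M))$'' you mean the image of $\ann_B(\overline M)$ under the symbol identification $\partial_i\mapsto\xi_i$, not the Fourier transform (which would land in $k[x]$); your stated inclusion $\operatorname{Ch}(M)\subseteq\mathbb A^d_x\times\Supp_B(\overline M)$ is the intended and correct conclusion.
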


\begin{proof}
Consider $\mathcal{T}$ as standard graded with $\deg r=0$ for all $r \in D$ and $\deg T_i=1$ for all $i$. From the graded part $p$ of \eqref{koszul} we get the following exact sequence of the left $D$-modules, 
\begin{equation} \label{hope}
0 \to \mathcal{T}_{p-d} \overset{\cdot A}{\lrt}\mathcal{T}_{p-d+1}^d \to \cdots \to \mathcal{T}_{p-1}^d \to \mathcal{T}_{p} \to Q_p \to 0,
\end{equation}
where $A=\left( (-1)^{d-1}[L_d]^T\mid \cdots\mid -[L_2]^T \mid [L_1]^T \right)$ and $[L_i]^T$ denotes the transpose of the matrix $[L_i]$ defined in Notation \ref{not}.
Applying $\Hom_D(-,D)$ to \eqref{hope} we get the following complex of right $D$-modules

\[0 \to (\mathcal{T}_{p})^T \to (\mathcal{T}_{p-1}^d)^T \to \cdots \to (\mathcal{T}_{p-d+1}^d)^T \overset{A \cdot}{\lrt} (\mathcal{T}_{p-d})^T \to  0,\] 
where the cokernel of the last map $A \cdot$ is $\Ext^d_D(Q_p,D)$, that is, $\Ext^d_D(Q_p,D) \cong \tau(M)$ (see Example \ref{eg-clr} for clarification). By \cite[Lemma 7.3]{Bjork} we have $d(\Ext^d_D(Q_p,D)) \leq 2d-d=d$. Since for any finitely generated $D$-module $N$, $d \leq d(N) \leq 2d$ so we get that $d(\Ext^d_D(Q_p,D))=d$ and hence it is a holonomic right $D$-module. The result follows.
\end{proof}

\begin{example}\label{eg-clr}
Let $d=3, p=3$ and $m,n,L_1,L_2,L_3$ as in Example \ref{eg-not}. Then 
\begin{align*}
A&=\left( [L_3]^T\mid -[L_2]^T \mid [L_1]^T \right)\\
&=\left(\begin{array}{cccccccccccccc}
c_1& c_2 &c_3&c_4 &\vline &-b_1& -b_2 &-b_3 &-b_4& \vline &a_1& a_2 &a_3&a_4 \end{array}\right) \in D^{1 \times 12}.
\end{align*} 
Note that $\mathcal{T}_{q-3} \cong D$ and $\mathcal{T}_{q-2}=DT_1+DT_2+DT_3+DT_4 \cong D^4$. Hence $(\mathcal{T}_{p-2}^3)^T \overset{A \cdot}{\lrt} (\mathcal{T}_{p-3})^T$ induces a map $((D^4)^3)^T \overset{A \cdot}{\lrt} (D)^T$. Now $\tau(M)= \tau(D^{12}/D \cdot A)= D^{12}/\tau(D \cdot A)$. So $\Ext^3_D(Q_p,D) \cong \tau(M)$. 
\end{example}

\s Let $\ell=\sum_{\alpha,\beta}C_{\alpha,\beta} x^{\alpha} \partial^\beta$ be an element in $D=k[x_1,\ldots,x_d]\langle\partial_1,\ldots,\partial_d\rangle$, where $x^{\alpha}=x_1^{\alpha_1}\cdots x_d^{\alpha_d}$ and $\partial^{\beta}=\partial_1^{\beta_1}\cdots\partial_d^{\beta_d}$. Let $w=(w_1,\ldots,w_d)$ be a given generic weight vector. Then the initial form of $l$ with respect to $w$ is denoted by $\inn_{(-w,w)}(\ell)$ and defined as \[\inn_{(-w,w)}(\ell)= \sum_{-\alpha \cdot w+\beta \cdot w \text{ is maximum}} C_{\alpha,\beta} x^{\alpha} \partial^\beta.\] To make $\deg(x_i)=1$ and $\deg(\partial_i)=-1$ we take the weight vector $(-1,\ldots,-1) \in \Z^d$. 

\begin{definition}\cite[Definition 4.4]{YC}
	Let $J \subset D$ be a left ideal, then the $k$-vector space
	$$
	\inn(J)=k\cdot\Big\{\inn(\ell) \mid \ell \in J \Big\}
	$$
	is a left ideal in $D$ and it is called the {\it initial ideal} of $J$.
\end{definition}

\begin{definition}\cite[Definition 4.5]{YC}
	Let $J \subset D$ be a holonomic left ideal. The elimination ideal 
	\begin{equation*}
	\inn(J) \;\cap\; k[-\sum_{i=1}^d x_i\partial_i]
	\end{equation*}
	is principal in the univariate polynomial ring $k[s]$, where $s=-\sum_{i=1}^d x_i\partial_i$ and the generator $b_J(s)$ of this ideal is called the {\it$b$-function} of $J$.
\end{definition}
In \cite{YC}, Cid gave the following definition for the $b$-function of a left $D$-module.
\begin{definition}\cite[Definition 4.6]{YC}
	\label{bF_module}
	Let $M^{'}$ be a holonomic left $D$-module given as the quotient module $M^{'}=D^r/N^{'}$.
	For each $i = 1,\ldots,r$ with the canonical projection $\pi_i:D^r\rightarrow D$ of $D^r$ onto the $i$-th component $e_i$, we define a left $D$-ideal 
	$$
	J_i=\pi_i(N^{'} \;\cap \;D\cdot e_i)=\big\{\ell \in D \mid (0,\ldots,\underbrace{\ell}_{i\text{-th}},\ldots,0) \in N^{'}  \big\}.
	$$
	Then the $b$-function of $M^{'}$ is given as the least common multiple of the $b$-functions of the $D$-ideals $J_i$, that is,
	$$
	b_{M^{'}}(s) ={\normalfont\text{LCM}}_{i=1,\ldots,r} \;\big(b_{J_i}(s)\big).
	$$ 
\end{definition}
For each $i=1,\ldots,r$, the canonical injection $D/J_i \hookrightarrow D^r/N^{'}$ implies that each ideal $J_i$ is holonomic. Thus $b_{J_i}(s)$ is a non-zero polynomial by \cite[Theorem 5.1.2]{SST} and hence the $b$-function of a holonomic module is a non-zero polynomial.

\begin{theorem}\label{root}
Consider the $b$-function $b_M(s)$ of the holonomic $D$-module $M$ defined in Notation \ref{not}. For any integer $u$, if $b_M(-\nu+d+u) \neq 0$ then we have $\mathcal{K}_{p,u}=0$.
\end{theorem}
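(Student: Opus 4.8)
The plan is to combine the $D$-module description of $\mathcal K$ from Theorem~\ref{MT2} with the classical principle that the integral roots of a $b$-function detect homogeneous polynomial solutions of a holonomic system. First I would turn the vanishing $\mathcal K_{p,u}=0$ into a statement about the solution set $V=\{h=(h_1,\dots,h_m)\in R^m\mid H\bullet h=0\}$ of Notation~\ref{not}. The key point is that $V$ is graded: each $L_i=\mathcal F(g_i)=\sum_{l}\mathcal F(a_{l,i})T_l$ has for its entries operators $\mathcal F(a_{l,i})=a_{l,i}(\partial_1,\dots,\partial_d)$ that are homogeneous of degree $\nu_i$ in the $\partial$'s, so $[L_i]\bullet\colon (R_v)^m\to (R_{v-\nu_i})^n$ is graded and $V=\bigcap_{i=1}^d\ker([L_i]\bullet)$ is a graded $R$-submodule of $R^m$. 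Transporting this grading through the isomorphism $\mathcal S_{p-d,*}\cong V$ of Notation~\ref{not} and through the twists appearing in the proof of Theorem~\ref{MT2} (using Corollary~\ref{vanishing-nonvanishing}(ii), which matches $\mathcal K_{p,\nu-d}$ with the constants $V_0$, and the fact that multiplication by $x_i$ on $\mathcal S=\sol(L_1,\dots,L_d;S)_{\mathcal F}$ acts as $\partial_i$ and hence reverses the $x$-grading), one obtains an isomorphism of $k$-vector spaces $\mathcal K_{p,u}\cong V_{j}$ with $j=\nu-d-u$, equivalently $-j=-\nu+d+u$. So it suffices to show that $b_M(-j)\neq 0$ forces $V_j=0$.

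Next, assume $V_j\neq 0$ and pick a nonzero $h=(h_1,\dots,h_m)\in V_j$; fix an index $i_0$ with $h_{i_0}\neq 0$, so $h_{i_0}\in R$ is a nonzero homogeneous polynomial of degree $j$. Recall $N=D^{dn}\cdot H$ and, from Definition~\ref{bF_module}, $J_{i_0}=\pi_{i_0}(N\cap D\cdot e_{i_0})=\{\ell\in D\mid \ell\,e_{i_0}\in N\}$. For $\ell\in J_{i_0}$ write $\ell\,e_{i_0}=vH$ with $v\in D^{dn}$; applying the pairing $(w_1,\dots,w_m)\mapsto\sum_{j}w_j\bullet h_j$ to both sides gives $\ell\bullet h_{i_0}=\sum_{k}v_k\bullet (H\bullet h)_k=0$. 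Hence $h_{i_0}$ is a nonzero homogeneous degree-$j$ polynomial solution of the left ideal $J_{i_0}$, which is holonomic because $D/J_{i_0}$ embeds in the holonomic module $M$ (Proposition~\ref{M-holo}), so $b_{J_{i_0}}(s)$ is a well-defined nonzero polynomial.

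The heart of the argument is the following lemma: if $J\subseteq D$ is a holonomic left ideal and $0\neq h\in R$ is homogeneous of degree $j$ with $J\bullet h=0$, then $b_J(-j)=0$. I would prove it as follows. With $E=\sum_i x_i\partial_i$, the generator $b_J(s)$ of the elimination ideal is $b_J(-E)$, and it lies in $\inn(J)$ for the weight chosen in Section~4 (for which $\deg x_i=1$, $\deg\partial_i=-1$, so $\deg(x^\alpha\partial^\beta)=|\alpha|-|\beta|$); moreover $b_J(-E)$ is homogeneous of weight-degree $0$. Since the degree-$0$ graded piece of the graded ideal $\inn(J)$ consists of genuine initial forms of elements of $J$ (the standard structural fact about initial ideals, cf.\ \cite{SST}), there is $\ell\in J$ with $\inn(\ell)=b_J(-E)$; thus $\ell=b_J(-E)+\ell''$, where every monomial $x^\alpha\partial^\beta$ occurring in $\ell''$ has $|\alpha|-|\beta|\le -1$, so $\ell''$ strictly lowers the degree of a homogeneous polynomial. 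Then $0=\ell\bullet h=b_J(-E)\bullet h+\ell''\bullet h=b_J(-j)\,h+\ell''\bullet h$, and comparing degree-$j$ graded components (only $b_J(-j)\,h$ contributes in degree $j$) gives $b_J(-j)\,h=0$, whence $b_J(-j)=0$ since $h\neq 0$.

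Applying the lemma to $J_{i_0}$ and $h_{i_0}$ yields $b_{J_{i_0}}(-j)=0$; since $b_M(s)=\lcm_{i}\,b_{J_i}(s)$ is divisible by $b_{J_{i_0}}(s)$, we conclude $b_M(-j)=b_M(-\nu+d+u)=0$. The contrapositive is exactly the assertion of the theorem. The step I expect to be the main obstacle is the bookkeeping in the first paragraph: carefully tracking the degree shift $j=\nu-d-u$ through the Fourier isomorphism $\overline{\mathcal F}$ and the composite of connecting maps $\gamma_1\circ\cdots\circ\gamma_{d-2}$ so that the root $-\nu+d+u$ comes out with the correct sign; a secondary technical point is justifying that $b_J(s)\in\inn(J)\cap k[s]$ is realized as $\inn(\ell)$ for an honest $\ell\in J$, which is standard but should be stated. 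Everything else is essentially formal.
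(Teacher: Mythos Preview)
Your proposal is correct and follows essentially the same contrapositive argument as the paper: assume $\mathcal{K}_{p,u}\neq 0$, use Theorem~\ref{MT2} to produce a nonzero homogeneous tuple $h\in V$ of degree $k=\nu-d-u$, and then deduce $b_M(-k)=0$ via the $b$-functions of the ideals $J_i$. Two minor remarks: the degree shift $j=\nu-d-u$ follows immediately from $\mathcal{K}\cong\mathcal{S}(-d,-\nu+d)$ in Theorem~\ref{MT2} (together with the sign-flip in the $x$-grading of $\mathcal{S}$), so your detour through Corollary~\ref{vanishing-nonvanishing}(ii) is unnecessary; and your lemma showing $b_J(-j)=0$ from a degree-$j$ polynomial solution is exactly the justification the paper compresses into the single line ``$b_{J_i}(s)\bullet h_i=0$ and hence $b_{J_i}(-k)h_i=0$''.
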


\begin{proof}
We prove by contradiction. Suppose $\mathcal{K}_{p,u} \neq 0$. Then by Theorem \ref{MT2} there exists $0 \neq h \in \mathcal{S}_{p-d,-k}$, where $-k=-\nu+d+u$. Following Notation \ref{not} we can write $h=(h_1,\ldots,h_m) \in V$ with $\deg h_i=k$. Let $b_{J_i}(s)$ be the $b$-function corresponding to the left $D$-ideal \[J_i=\pi_i(N \cap D \cdot e_i)= \{\ell \in D \mid (0,\ldots,\underbrace{\ell}_{i\text{-th}},\ldots,0)\in N\}.\] Thus $b_{J_i}(s) \cdot e_i \bullet h=0$, which implies that $b_{J_i}(s) \bullet h_i=0$ and hence $b_{J_i}(-k)h_i=0$. As $b_M(-k) \neq 0$ so $b_{J_i}(-k) \neq 0$ (as $b_M(s)=\lcm_{i=1,\ldots,m}(b_{J_i}(s))$ by the result generalizing \cite[Definition 4.6]{YC} in our case). Therefore $h_i=0$ for all $i$, a contradiction. 
\end{proof}

\begin{corollary}[with hypotheses as in \ref{ass}]\label{divide_b_function}
Let $u$ be the lowest possible $x$-degree for an element in the graded part $\mathcal{K}_{p,*}$, that is, $\mathcal{K}_{p,u} \neq 0$ and $\mathcal{K}_{p,u-1}=0$. Then the polynomial $s(s+1)\cdots(s+\nu-d-u)$ divides the $b$-function $b_M(s)$.
\end{corollary}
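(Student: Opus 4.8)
The plan is to combine Theorem~\ref{root} with a statement that the graded pieces $\mathcal{K}_{p,u'}$ have no internal zeros between $x$-degree $u$ and $x$-degree $\nu-d$. First, re-examining the proof of Theorem~\ref{root} in the converse direction: if $\mathcal{K}_{p,u'}\neq 0$ for an integer $u'$, then picking $0\neq h=(h_1,\ldots,h_m)$ in the associated solution space $V$ (so $\deg h_i=\nu-d-u'$) and some $i$ with $h_i\neq 0$, the relation $b_{J_i}(s)\bullet h_i=0$ forces $b_{J_i}(-\nu+d+u')h_i=0$, hence $b_{J_i}(-\nu+d+u')=0$, hence $b_M(-\nu+d+u')=0$. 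Since the numbers $0,-1,\ldots,-(\nu-d-u)$ are distinct, it therefore suffices to show that $\mathcal{K}_{p,u'}\neq 0$ for every integer $u'$ with $u\le u'\le\nu-d$; then $b_M$ vanishes at all of them, and $s(s+1)\cdots(s+\nu-d-u)$ divides $b_M(s)$.

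The two endpoints are already available: $\mathcal{K}_{p,u}\neq 0$ by hypothesis, and $\mathcal{K}_{p,\nu-d}\neq 0$ by Corollary~\ref{vanishing-nonvanishing}(ii). For the values in between I would use the $H^d_{\m}(S)$-model. Combining Theorem~\ref{thm1} with the identifications made in the proof of Theorem~\ref{MT2} (the injection $\gamma=\gamma_1\circ\cdots\circ\gamma_{d-2}$ with $\im\gamma=(0:_{H^d_{\m}(S)}(g_1,\ldots,g_{d-2}))$, together with Claim~\ref{cl0}) — exactly as already extracted in the proof of Corollary~\ref{vanishing-nonvanishing} — one gets, for every integer $u'$, a $k$-isomorphism
\[\mathcal{K}_{p,u'}\;\cong\;\{\,w\in H^d_{\m}(S)_{p-d,\,u'-\nu}\mid g_i\cdot w=0\text{ for }i=1,\ldots,d\,\},\]
compatible with multiplication by the variables $x_j$, which raises the $x$-degree by one and preserves the annihilator submodule since $H^d_{\m}(S)$ is an $S$-module.

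The key step is then: if $u'<\nu-d$ and $0\neq w$ lies in the right-hand side above, then $x_j\cdot w\neq 0$ for some $j$. Indeed, write $w=\sum c_{\alpha,\beta}\,\frac{1}{x^{\alpha}}{\bf T}^{\beta}$, the sum over $\alpha\in\Z^{d}_{\ge 1}$ with $|\alpha|=\nu-u'>d$ and $|\beta|=p-d$; since $|\alpha|>d$, every $\alpha$ occurring has some entry $\ge 2$. Choosing $(\alpha_0,\beta_0)$ in the support and $j$ with $(\alpha_0)_j\ge 2$, the recalled action of $R$ on $H^d_{\m}(R)$ shows that the coefficient of $\frac{1}{x^{\alpha_0-e_j}}{\bf T}^{\beta_0}$ in $x_j\cdot w$ is exactly $c_{\alpha_0,\beta_0}\neq 0$, with no cancellation, so $x_j\cdot w\neq 0$; it sits in $x$-degree $u'+1-\nu$ and is still killed by $g_1,\ldots,g_d$, whence $\mathcal{K}_{p,u'+1}\neq 0$. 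Starting from $\mathcal{K}_{p,u}\neq 0$ and iterating up to $x$-degree $\nu-d$ gives $\mathcal{K}_{p,u'}\neq 0$ for all $u\le u'\le\nu-d$, and the conclusion follows from the first paragraph. I expect the only point needing care to be the bidegree bookkeeping in the displayed isomorphism — pinning down the shift so that $\mathcal{K}_{p,u'}$ really is the piece of $(0:_{H^d_{\m}(S)}(g_1,\ldots,g_d))$ in $x$-degree $u'-\nu$ and $T$-degree $p-d$ — but this is precisely what is computed for $u'=\nu-d$, and set up for general $u'$, in the proofs of Theorems~\ref{thm1},~\ref{MT2} and Corollary~\ref{vanishing-nonvanishing}; everything else is the elementary socle computation in $H^d_{\m}(R)$ above.
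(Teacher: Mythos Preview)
Your proof is correct and follows the same strategy as the paper: show $\mathcal{K}_{p,u'}\neq 0$ for every $u\le u'\le \nu-d$ and apply (the contrapositive of) Theorem~\ref{root}. The paper's proof is a two-line argument that simply \emph{asserts} ``$\mathcal{K}_{p,v}\neq 0$ for all $v\ge u$'' and then invokes Theorem~\ref{root}; you supply what the paper leaves implicit, namely a proof of that nonvanishing via the identification $\mathcal{K}_{p,u'}\cong (0:_{H^d_{\m}(S)}(g_1,\ldots,g_d))_{p-d,\,u'-\nu}$ and the elementary observation that multiplication by some $x_j$ is injective on this piece as long as $u'<\nu-d$. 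So the approaches coincide; your version is more complete, filling in a step the paper takes for granted.
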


\begin{proof}
Since $\mathcal{K}_{p,v} \neq 0 $ for all $v \geq u$ so by Theorem \ref{root} it follows that $b_M(-\nu+d+v) = 0$ for all $v \geq u$. Hence $s(s+1) \cdots s(\nu-d-u)$ divides $b_M(s)$.
\end{proof}

We have the following result from \cite[Lemma 4.10]{YC}.
\begin{lemma}\label{belong}
	For any $k \ge 0$ we have the identity 
	\begin{equation}\label{ideal-eq1}
		s(s+1)\cdots(s+k)={(-1)}^{k+1}\sum_{|\alpha|=k+1} \frac{(k+1)!}{\alpha_1!~\alpha_2! \cdots \alpha_d!} x^{\alpha}\partial^{\alpha}.
	\end{equation}
	Thus, we have that 
	\begin{enumerate}[(i)]
		\item $s(s+1)\cdots(s+k) \in D(\partial_1, \ldots, \partial_d)^{k+1}$, where $D(\partial_1,\ldots,\partial_d)^{k+1}$ denotes the left $D$-ideal generated by the elements $\{\partial_1^{\zeta_1}\cdots\partial_d^{\zeta_d}\mid \zeta_1+\cdots+\zeta_d = k+1\}$;
		\item $s(s+1)\cdots(s+k)$ is homogeneous, that is, $$
		\inn\big(s(s+1)\cdots(s+k)\big)=s(s+1)\cdots(s+k).$$
	\end{enumerate}
\end{lemma}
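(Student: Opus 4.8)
The plan is to deduce everything from the classical ``Euler operator'' identity
\begin{equation*}
E(E-1)\cdots(E-m+1)\;=\;\sum_{|\alpha|=m}\frac{m!}{\alpha_1!\cdots\alpha_d!}\,x^{\alpha}\partial^{\alpha},\qquad m\ge 1,\tag{$\ast$}
\end{equation*}
where $E=\sum_{i=1}^{d}x_i\partial_i=-s$. Granting $(\ast)$ with $m=k+1$, the identity \eqref{ideal-eq1} is immediate: since $s=-E$ one has $s(s+1)\cdots(s+k)=\prod_{j=0}^{k}(j-E)=(-1)^{k+1}\prod_{j=0}^{k}(E-j)=(-1)^{k+1}E(E-1)\cdots(E-k)$, and substituting $(\ast)$ produces the right-hand side of \eqref{ideal-eq1}.

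To prove $(\ast)$, write $\theta_i:=x_i\partial_i$ and, for a commuting symbol $a$, $a^{\underline n}:=a(a-1)\cdots(a-n+1)$. First I would settle the one-variable case $x_i^{n}\partial_i^{n}=\theta_i^{\underline n}$ by induction on $n$: from $\partial_i x_i=\theta_i+1$ one gets $\partial_i\theta_i=(\theta_i+1)\partial_i$, hence $P(\theta_i)\partial_i=\partial_i\,P(\theta_i-1)$ for every polynomial $P$, so
\[
x_i^{n+1}\partial_i^{n+1}=x_i\bigl(x_i^{n}\partial_i^{n}\bigr)\partial_i=x_i\,\theta_i^{\underline n}\,\partial_i=x_i\partial_i\cdot(\theta_i-1)^{\underline n}=\theta_i\,(\theta_i-1)^{\underline n}=\theta_i^{\underline{n+1}},
\]
using the inductive hypothesis and $x_i\partial_i=\theta_i$. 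Since $\theta_1,\dots,\theta_d$ commute pairwise, the falling factorial of the sum $E=\theta_1+\cdots+\theta_d$ expands, by iterated use of the commutative Vandermonde/Chu identity $(a+b)^{\underline n}=\sum_{j}\binom{n}{j}a^{\underline j}b^{\underline{n-j}}$, as $E^{\underline m}=\sum_{|\alpha|=m}\tfrac{m!}{\alpha_1!\cdots\alpha_d!}\,\theta_1^{\underline{\alpha_1}}\cdots\theta_d^{\underline{\alpha_d}}$. Substituting the one-variable identity $\theta_i^{\underline{\alpha_i}}=x_i^{\alpha_i}\partial_i^{\alpha_i}$ and noting that the factors for distinct $i$ involve disjoint variables (so $\theta_1^{\underline{\alpha_1}}\cdots\theta_d^{\underline{\alpha_d}}=x^{\alpha}\partial^{\alpha}$, which is already a PBW monomial $x^{\alpha}\partial^{\beta}$ with $\beta=\alpha$) yields $(\ast)$. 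A single direct induction on $m$ on the multivariate statement also works, pushing one $x_j\partial_j$ past $x^{\alpha}\partial^{\alpha}$ and collecting the binomial coefficients, but the bookkeeping is less transparent.

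It remains to read off (i) and (ii) from the normal form produced by $(\ast)$. For (i), each summand of \eqref{ideal-eq1} is $x^{\alpha}\partial^{\alpha}=x^{\alpha}\cdot(\partial_1^{\alpha_1}\cdots\partial_d^{\alpha_d})$ with $|\alpha|=k+1$; here $\partial_1^{\alpha_1}\cdots\partial_d^{\alpha_d}$ is one of the generators $\partial_1^{\zeta_1}\cdots\partial_d^{\zeta_d}$, $\zeta_1+\cdots+\zeta_d=k+1$, of the left ideal $D(\partial_1,\dots,\partial_d)^{k+1}$, and left multiplication by $x^{\alpha}\in D$ keeps us inside that ideal; summing, $s(s+1)\cdots(s+k)\in D(\partial_1,\dots,\partial_d)^{k+1}$. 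For (ii), with the weight vector $w=(-1,\dots,-1)$ the $(-w,w)$-degree of the PBW monomial $x^{\alpha}\partial^{\alpha}$ is $-\alpha\cdot w+\alpha\cdot w=0$ for every $\alpha$, so every monomial occurring in the normal form $(\ast)$ of $s(s+1)\cdots(s+k)$ has one and the same degree $0$; hence its initial form equals the whole operator.

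The only genuine work is $(\ast)$, and the subtlety to keep in mind throughout is that $s$ (equivalently $E$) is \emph{not} central, so commutators must be tracked carefully; the payoff is precisely that every monomial $x^{\alpha}\partial^{\alpha}$ with balanced exponents is $(-w,w)$-homogeneous of degree $0$, which is what makes part (ii) instantaneous and is also why the final product ends up sitting in $D(\partial_1,\dots,\partial_d)^{k+1}$.
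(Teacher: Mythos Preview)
Your proof is correct, and it takes a genuinely different route from the paper's. The paper proves the identity \eqref{ideal-eq1} by a direct induction on $k$: from the one-variable relation $x_i^{\beta_i}\partial_i^{\beta_i}(x_i\partial_i-\beta_i)=x_i^{\beta_i+1}\partial_i^{\beta_i+1}$ it derives $(-1)\sum_i x^{\alpha+e_i}\partial^{\alpha+e_i}=x^{\alpha}\partial^{\alpha}(s+|\alpha|)$, multiplies the inductive hypothesis by $(s+k+1)$, and then must verify a multinomial-coefficient identity (proved by a separate induction on $d$) to regroup the resulting sum. You instead factor the problem into three classical pieces: the one-variable identity $x_i^{n}\partial_i^{n}=\theta_i^{\underline{n}}$, the Vandermonde expansion $(\theta_1+\cdots+\theta_d)^{\underline{m}}=\sum_{|\alpha|=m}\binom{m}{\alpha}\prod_i\theta_i^{\underline{\alpha_i}}$ valid for commuting $\theta_i$, and the observation that $\prod_i x_i^{\alpha_i}\partial_i^{\alpha_i}=x^{\alpha}\partial^{\alpha}$ since variables with distinct indices commute. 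This is cleaner and avoids the ad hoc coefficient computation; the paper's approach, on the other hand, is entirely self-contained and does not invoke the Vandermonde identity as a black box. Your derivations of (i) and (ii) from the normal form coincide with the paper's (which treats them as immediate).
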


\begin{proof}
From the proof of \cite[Lemma 4.10]{YC} we have  \[x_i^{\beta_i}\partial_i^{\beta_i}(x_i\partial_i-\beta_i)=x_i^{\beta_i+1}\partial_i^{\beta_i+1}\] for all $\beta_i \geq 0$ and $1 \leq i,j \leq d$. For all $\alpha=(\alpha_1, \ldots, \alpha_d) \in \N^d$ we have
\begin{align*}
x^{\alpha+e_i} \partial^{\alpha+e_i}&=x_1^{\alpha_1} \cdots x_{i-1}^{\alpha_{i-1}} x_i^{\alpha_i+1}x_{i+1}^{\alpha_{i+1}} \cdots x_d^{\alpha_d}\partial_1^{\alpha_1} \cdots \partial_{i-1}^{\alpha_{i-1}} \partial_i^{\alpha_i+1}\partial_{i+1}^{\alpha_{i+1}} \cdots \partial_d^{\alpha_d}\\
&=x_1^{\alpha_1} \cdots x_{i-1}^{\alpha_{i-1}} x_{i+1}^{\alpha_{i+1}} \cdots x_d^{\alpha_d} \partial_1^{\alpha_1} \cdots \partial_{i-1}^{\alpha_{i-1}} \partial_{i+1}^{\alpha_{i+1}} \cdots \partial_d^{\alpha_d}(x_i^{\alpha_i+1}\partial_i^{\alpha_i+1})\\
&=x_1^{\alpha_1} \cdots x_{i-1}^{\alpha_{i-1}} x_{i+1}^{\alpha_{i+1}} \cdots x_d^{\alpha_d} \partial_1^{\alpha_1} \cdots \partial_{i-1}^{\alpha_{i-1}} \partial_{i+1}^{\alpha_{i+1}} \cdots \partial_d^{\alpha_d}\{x_i^{\alpha_i}\partial_i^{\alpha_i}(x_i\partial_i-\alpha_i)\}\\
&=x^{\alpha} \partial^{\alpha}(x_i\partial_i-\alpha_i)
\end{align*}
(as $x_i, \partial_j$ commute with each other when $i \neq j$).
Thus we get
\begin{align*}
(-1)\sum_{i=1}^d x^{\alpha+e_i} \partial^{\alpha+e_i}&=(-1)\sum_{i=1}^d x^{\alpha} \partial^{\alpha}(x_i\partial_i-\alpha_i)=x^{\alpha} \partial^{\alpha}\sum_{i=1}^d(-x_i\partial_i+\alpha_i)\\
&=x^{\alpha} \partial^{\alpha}(-\sum_{i=1}^dx_i\partial_i+\sum_{i=1}^d\alpha_i)=x^{\alpha} \partial^{\alpha}(s+k+1),
\end{align*}
where $|\alpha|=\alpha_1+\cdots+\alpha_d=k+1$.

To prove the result we use induction hypothesis on $k$. For $k=0$, the result is trivially true (take $\alpha= e_1, \ldots, e_d$). Now we show that the result is true for $k+1$ if it holds true for $k$. Then
\begin{equation}\label{induc-eq}
\begin{split}
s(s+1)\cdots(s+k)(s+k+1)&={(-1)}^{k+1}\sum_{|\alpha|=k+1} \frac{(k+1)!}{\alpha_1!~\alpha_2! \cdots \alpha_d!} x^{\alpha}\partial^{\alpha}(s+k+1)\\
&={(-1)}^{k+2}\sum_{|\alpha|=k+1} \frac{(k+1)!}{\alpha_1!~\alpha_2! \cdots \alpha_d!} \sum_{i=1}^d x^{\alpha+e_i}\partial^{\alpha+e_i}.
\end{split}
\end{equation}
Note that for any $\beta=(\beta_1, \ldots, \beta_d) \in \N^d$ with $|\beta|=k+2$, we can write $\beta=\alpha+e_i$ for some $i$ and $\alpha=(\alpha_1, \ldots, \alpha_d) \in \N^d$ with 
and $|\alpha|=k+1$. Again $\alpha+e_i=\alpha'+e_j$, where $\alpha'=(\alpha_1, \ldots, \alpha_j-1, \ldots, \alpha_i+1, \ldots, \alpha_d)$ for all $j \neq i$ (position of $i,j$ may vary). Choose any $i \in \{1, \ldots, d\}$ and fix it. Then for any $i \neq j$ we claim that \[\frac{(k+1)!}{\alpha_1! \cdots \alpha_d!}+\sum_{\substack{j=1\\j \neq i}}^d \frac{(k+1)!}{\alpha_1! \cdots (\alpha_j-1)! \cdots (\alpha_i+1)! \cdots \alpha_d!}=\frac{(k+2)!}{\alpha_1!  \cdots (\alpha_i+1)! \cdots \alpha_d!}.\]
We prove the sub-claim by induction on $d$. We know $\binom{k+1}{j}+\binom{k+1}{j-1}=\binom{k+2}{j}$, that is, \[\frac{(k+1)!}{j!~  i!}+\frac{(k+1)!}{(j-1)!~(i+1)!}=\frac{(k+2)!}{j!(i+1)!},\]where $i+j=k$. This takes care of the base case. Let the sub-claim is true for any $r<d$. Let us assume that $i \neq 1$. Since $k+1-\alpha_1 \leq k+1$, so we have $K_1:=(k+1)!/(k+1-\alpha_1)! \in \N$. Now

\begingroup
\allowdisplaybreaks
\begin{align*}
&\frac{(k+1)!}{\alpha_1! \cdots \alpha_d!}+\sum_{\substack{j=1\\j \neq i}}^d \frac{(k+1)!}{\alpha_1! \cdots (\alpha_j-1)! \cdots (\alpha_i+1)! \cdots \alpha_d!} \\
=&\frac{K_1}{\alpha_1!}\left(\frac{(k+1-\alpha_1)!}{\alpha_2! \cdots  \alpha_d!}+\sum_{\substack{j=2\\j \neq i}}^d \frac{(k+1-\alpha_1)!}{\alpha_2! \cdots (\alpha_j-1)! \cdots (\alpha_i+1)! \cdots \alpha_d!}\right)+ \frac{(k+1)!}{(\alpha_1-1)! \cdots (\alpha_i+1)!\cdots \alpha_d!}\\
=&\frac{K_1}{\alpha_1!} \cdot \frac{(k+2-\alpha_1)!}{\alpha_2! \cdots (\alpha_i+1)! \cdots  \alpha_d!}+\alpha_1\cdot \frac{(k+1)!}{\alpha_1! \cdots (\alpha_i+1)! \cdots  \alpha_d!}\quad \text{(by induction hypothesis)}\\
=&\frac{(k+1)!}{\alpha_1! \cdots (\alpha_i+1)! \cdots  \alpha_d!} \left(k+2-\alpha_1+\alpha_1\right) \quad (\text{as } K_1 \cdot (k+2-\alpha_1)!=(k+2-\alpha_1)(k+1)!) \\
=&\frac{(k+2)!}{\alpha_1! \cdots (\alpha_i+1)! \cdots  \alpha_d!}.
\end{align*}
\endgroup
Thus the sub-claim is true. From \eqref{induc-eq} we get that 
\[s(s+1)\cdots(s+k)(s+k+1)={(-1)}^{k+2}\sum_{|\beta|=k+1} \frac{(k+2)!}{\beta_1!~\beta_2! \cdots \beta_d!} x^{\beta}\partial^{\beta}.\] The result follows. 
\end{proof}

\s Using notations as in Notation \ref{not}, define the matrix $F = \mathcal{F}(H) = (\mathcal{F}(H_{i,j})) \in R^{dn\times m}$, the $dn\times m$ matrix with entries in $R$ obtained after applying the Fourier transform to each entry of the matrix $H$.
As we have defined $M=D^{m}/N$, in the similar way we define the graded $R$-module $L = R^{m}/(R^{dn}\cdot F)$. Notice the rows of $F$ are homogeneous of degree $\nu_i$ for some $1 \leq i \leq d$.

Consider $S$ as standard graded with $\deg r=0$ for all $r \in R$ and $\deg T_i=1$ for all $i$. Since $\{g_1,\ldots,g_d\}$ is a regular sequence in $S$ so the Koszul complex
{\small
\[\K(g_1,\ldots,g_d): \quad 0 \to S(-d) \xrightarrow{\cdot\left[\begin{smallmatrix}
	(-1)^{d-1}g_d& \cdots &-g_2 & g_1
	\end{smallmatrix}\right]} S(-d+1)^d \to \cdots \to
s(-1)^d \xrightarrow{\cdot\left[\begin{smallmatrix}
	g_1\\
	g_2\\
	\vdots\\
	g_d
	\end{smallmatrix}\right]} S \to  S/(g_1,\ldots,g_d)\to 0.\] }
gives a free resolution of $\sym(I) \cong S/(g_1,\ldots,g_d)$.  Recall that $\bideg g_i=(1, \nu_i)$ for all $i=1,2,3$. Now restricting $\K(g_1,\ldots,g_d)$ to the graded part $p$, we get a free resolution of $\sym_{p}(I)$ as a $R$-module 

{\small
\begin{align*}
0 \to R(-\nu)^{\binom{p}{d}} \to & \bigoplus_{i=0}^{d-1} R(-\nu+\nu_{d-i})^{\binom{p+1}{d}} \to \cdots \to \bigoplus_{i=1}^{d} R(-\nu+\sum_{\substack{j=1 \\ j\neq i}}\nu_j)^{\binom{p+d-1}{d}}\to R^{\binom{p+d}{d}} \to \sym_{p}(I) \to 0
\end{align*}}
( as $\# S_p= \binom{p+(d+1)-1}{(d+1)-1}=\binom{p+d}{d}$ implies that $S_p \cong R^{\binom{p+d}{d}}$ and so on). 
Applying $\Hom_R(-, R)$ we get the complex
{\small
\begin{align*}
\label{res_L}
0 \to R^{\binom{p+d}{d}} \to \bigoplus_{i=1}^{d} R(\nu-\sum_{\substack{j=1 \\ j\neq i}}\nu_j)^{\binom{p+d-1}{d}}\to \cdots \to \bigoplus_{i=0}^{d-1} R(\nu-\nu_{d-i})^{\binom{p+1}{d}} \to R(\nu)^{\binom{p}{d}} \to 0,
\end{align*}}
where the cokernel of the map on the right is the graded $R$-module ${}^*\Ext_R^d(\sym_{p}(I), R)$. Let $L$ denote the cokernel of the map on the right of the new complex induced from the above complex by degree $-\nu$ shifting of each modules appearing in the complex. Therefore
$L(\nu)\cong {}^*\Ext_R^d(\sym_{p}(I), R)$ as graded $R$-modules.

\begin{lemma}
$L$ is a finite length module.
\end{lemma}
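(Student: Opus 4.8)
The plan is to deduce finite length of $L$ from the identification $L(\nu)\cong {}^*\Ext_R^d(\sym_{p}(I), R)$ of graded $R$-modules recorded just above the statement, so it is enough to prove that ${}^*\Ext_R^d(\sym_{p}(I), R)$ has finite length. First I would note that $\sym_{p}(I)$ is a finitely generated graded $R$-module: restricting the Koszul complex $\K(g_1,\ldots,g_d)$ to $T$-degree $p$ gives, as written out in the excerpt, a finite graded free resolution of $\sym_{p}(I)$ over $R$ of length $\le d$. Consequently the complex obtained from it by applying $\Hom_R(-,R)$ and shifting by $-\nu$ consists of finite free $R$-modules, and ${}^*\Ext_R^d(\sym_{p}(I), R)$, being the cokernel of its last map, is a finitely generated graded $R$-module.

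Next I would invoke the standard criterion that a finitely generated graded $R$-module $N$ has finite length if and only if $\dim N=0$, and that — since the minimal primes of $\Supp N$ are homogeneous — this holds as soon as $N_{\fp}=0$ for every homogeneous prime $\fp\ne\m$. For $N={}^*\Ext_R^d(\sym_{p}(I), R)$ one has $N_{\fp}\cong \Ext^d_{R_{\fp}}\bigl((\sym_{p}(I))_{\fp},R_{\fp}\bigr)$, because the graded free resolution of $\sym_{p}(I)$ localizes to a free resolution of $(\sym_{p}(I))_{\fp}$ and $\Hom$ commutes with localization for finitely presented modules. Now for a homogeneous prime $\fp\ne\m$ the local ring $R_{\fp}$ is regular of dimension $\hgt\fp\le d-1$ (every proper homogeneous ideal of $R$ is contained in $\m$, which has height $d$), so $\projdim_{R_{\fp}}(\sym_{p}(I))_{\fp}\le \dim R_{\fp}\le d-1<d$, and therefore $\Ext^d_{R_{\fp}}=0$.

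Putting these together, ${}^*\Ext_R^d(\sym_{p}(I), R)$ vanishes at every homogeneous prime other than $\m$, hence has finite length; via the isomorphism $L(\nu)\cong {}^*\Ext_R^d(\sym_{p}(I), R)$ the same is true of $L$. There is no serious obstacle in this argument: the only content — really just an observation — is that over the $d$-dimensional regular ring $R$ the top $\Ext$ module $\Ext^d_R(-,R)$ of a finitely generated module has zero-dimensional support, and the only point requiring a little care is bookkeeping with the grading, namely that it suffices to test localization at homogeneous primes and that the graded $\Ext$ is compatible with those localizations.
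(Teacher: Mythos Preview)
Your argument is correct and takes a genuinely different route from the paper's proof. You use the identification $L(\nu)\cong {}^*\Ext_R^d(\sym_p(I),R)$ together with the elementary fact that the top $\Ext$ over the $d$-dimensional regular ring $R$ is supported only at $\m$; this requires nothing beyond localization of $\Ext$ and the bound $\projdim_{R_\fp}\le\dim R_\fp\le d-1$ for homogeneous $\fp\ne\m$. The paper instead builds an injective $R$-linear map $\theta:L\hookrightarrow M=D^m/N$ via the Fourier transform (checking injectivity by a diagram chase) and then invokes Proposition~\ref{M-holo} to conclude that $M$ is holonomic, hence of finite length as a $D$-module, and deduces the finite length of $L$ from that embedding. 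Your approach is more self-contained and purely commutative-algebraic, avoiding the $D$-module apparatus altogether; the paper's approach, while in keeping with its $D$-module theme, has the additional wrinkle that finite $D$-length of $M$ does not obviously translate to finite $R$-length of the $R$-submodule $L$, a step the paper leaves implicit.
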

\begin{proof}
We have the following commutative diagram 
\begin{align*}
\xymatrix@R0.5cm{R^{dn}\ar@{^{(}->}[d]_{{\mathcal{F}}|_{R^{dn}}} \ar[r]^{\cdot F}& R^m \ar[r]^{\phi}\ar@{^{(}->}[d]_{{\mathcal{F}}|_{R^m}}& \frac{R^m}{R^{dn} \cdot F} \ar[r] \ar[d]^\theta & 0\\
	D^{dn} \ar[r]^{\cdot H}& D^m \ar[r]^{\psi}& \frac{D^m}{D^{dn} \cdot H} \ar[r]  & 0}
\end{align*}

Since $\mathcal{F}|_{R^{m}}(R^{dn} \cdot F) \subseteq D^{dn} \cdot H$, so it induces a map $\theta: L=R^m/(R^{dn}\cdot F) \to D^m/(D^{dn} \cdot H)=M$. We claim that $\theta$ is injective. Let $a \in \ker \theta$. Since $\phi$ is surjective so there exists $b \in R^m$ such that $\phi(b)=a$. Let ${\mathcal{F}}|_{R^m}(b)=c$. Since $\psi(c)=\psi \circ {\mathcal{F}}|_{R^m}(b)=\theta \circ \phi(b)=0$, so $c \in \im (\cdot H)$, that is, there exists some $e \in D^{dn}$ such that $e \cdot H=c$. Notice $e$ is a polynomial in $\partial_i$'s (as all entries of $H$ are in $\partial_i$'s and $c={\mathcal{F}}|_{R^m}(b)$ is a polynomial in $\partial_i$'s). Thus we can construct $e' \in R$ (polynomial in $x_i$'s) such that ${\mathcal{F}}|_{R^{dn}}(e')=e$. Clearly ${\mathcal{F}}|_{R^m}(e' \cdot F)= \mathcal{F}|_{R^{dn}}(e') \cdot \mathcal{F}(F)= e \cdot H=c$. Since $\mathcal{F}|_{R^{m}}$ is injective so we have ${\mathcal{F}}|_{R^m}(e' \cdot F)=b$ and hence $a=0$. The claim follows. 

Thus $L$ is isomorphic to a submodule of $M$. Now $M$ is holonomic by Proposition \ref{M-holo} and so it's length is finite by \cite[2.3, p. 89]{SCC}. Hence length of $L$ is also finite.
\end{proof}


The following result will show that the approximation given in Corollary \ref{divide_b_function} is actually strict. While proving this result we denote the $i$-th component of the free $R$-module $R^m$ by $e^R_i$ and the $i$-th component of the free $D$-module $D^m$ by $e^D_i$ to avoid confusion.
\begin{theorem}[with hypotheses as in \ref{ass}]\label{lowest-x-deg}
		Let $b_M(s)$ be the $b$-function of the holonomic module $M$ defined in Notation \ref{not} and let $u$ be the lowest possible $x$-degree for an element in the graded part $\mathcal{K}_{p,*}$.
		Then \[b_M(s)=s(s+1)\cdots(s+\nu-d-u).\]
\end{theorem}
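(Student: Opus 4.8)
The plan is to prove the two inequalities: that $s(s+1)\cdots(s+\nu-d-u)$ divides $b_M(s)$, which is exactly Corollary \ref{divide_b_function}, and conversely that $b_M(s)$ divides $s(s+1)\cdots(s+\nu-d-u)$, which is the new content. Since by Corollary \ref{divide_b_function} the left side already divides $b_M(s)$, it suffices to show that $b_M(s)$ divides $s(s+1)\cdots(s+\nu-d-u)$, i.e. that the polynomial $p(s)=s(s+1)\cdots(s+\nu-d-u)$ lies in the elimination ideal defining $b_M(s)$ — more precisely, that $p(s)$ annihilates $M$ in the sense of Definition \ref{bF_module}, so that $p(s)\in J_i$ for every $i$. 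Once $p(s)\in J_i$ for all $i=1,\dots,m$, we get $b_{J_i}(s)\mid p(s)$, hence $b_M(s)=\operatorname{LCM}_i b_{J_i}(s)$ divides $p(s)$, and combining with Corollary \ref{divide_b_function} forces $b_M(s)=p(s)$ up to a unit (both are monic), giving the claim.

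To show $p(s)\cdot e^D_i\in N=D^{dn}\cdot H$ for each $i$, I would exploit the comparison with the finite-length module $L=R^m/(R^{dn}\cdot F)$ established just above. Set $k=\nu-d-u$, so $p(s)=s(s+1)\cdots(s+k)$. By Lemma \ref{belong}(i), $p(s)$ lies in the left $D$-ideal $D(\partial_1,\dots,\partial_d)^{k+1}$ generated by the degree-$(k+1)$ monomials in the $\partial_j$'s. Therefore $p(s)\cdot e^D_i\in N$ will follow once I know that $\partial^{\zeta}\cdot e^D_i\in N$ for every $\zeta$ with $|\zeta|=k+1$; equivalently, applying the Fourier transform, that the image of $x^{\zeta}e^R_i$ in $L$ vanishes for all $|\zeta|=k+1$, i.e. that $\mathfrak{m}^{k+1}L=0$. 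Here I use that $L$ is concentrated in $x$-degrees: the isomorphism $L(\nu)\cong {}^*\Ext^d_R(\sym_p(I),R)$ identifies $L$ with the Matlis-type dual of $H^0_{\mathfrak{m}}(\sym_p(I))=\mathcal{K}'_{p,*}$ (shifted appropriately), and its socle degrees translate, via Theorem \ref{MT2} and the definition $-k=-\nu+d+u$, into the statement that the top nonzero $x$-degree of $L$ corresponds to the lowest $x$-degree $u$ of $\mathcal{K}_{p,*}$. The key numerical point is that the generator $\frac{1}{x_1\cdots x_d}$-type elements shift degrees by exactly $d$ under $\overline{\mathcal{F}}$ (as computed in Step-2 of the proof of Theorem \ref{MT2}), so that $L$ is nonzero precisely in $x$-degrees between $0$ and $k$, whence $\mathfrak{m}^{k+1}$ kills $L$.

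Concretely: by Theorem \ref{MT2}, $\mathcal{K}_{p,u}\neq 0$ and $\mathcal{K}_{p,u-1}=0$ translate to $\mathcal{S}_{p-d,\,-k}\neq 0$ and $\mathcal{S}_{p-d,\,-k-1}=0$, and by Corollary \ref{vanishing-nonvanishing}(i) (applied after the degree shift) $\mathcal{S}_{p-d,\,v}=0$ for all $v$ with $v>0$ as well. Dualizing, $L_j\neq 0$ only for $0\le j\le k$ — in particular $L$ is a graded module with $L_j=0$ for $j>k$, so $\mathfrak{m}^{k+1}L=0$ since $L$ is generated in degree $0$ (it is a quotient of $R^m$). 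Consequently $x^{\zeta}e^R_i\mapsto 0$ in $L$ for $|\zeta|=k+1$, i.e. $x^{\zeta}e^R_i\in R^{dn}\cdot F$; applying $\mathcal{F}^{-1}$ gives $\partial^{\zeta}e^D_i\in D^{dn}\cdot H=N$; summing with the coefficients of Lemma \ref{belong}(i) gives $p(s)e^D_i\in N$, hence $p(s)\in J_i$, as desired.

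The main obstacle I anticipate is the careful bookkeeping of the three interlocking degree shifts — the shift by $d$ coming from $\overline{\mathcal{F}}$, the shifts $-\nu_i$ from the connecting maps $\gamma_i$, and the overall $(-d,-\nu+d)$ twist in Theorem \ref{MT2} — to pin down exactly that $L$ lives in $x$-degrees $[0,k]$ with $k=\nu-d-u$ and is generated in degree $0$. The rest is essentially formal: Lemma \ref{belong}(i) does the algebraic work of converting "$\mathfrak{m}^{k+1}$ kills the dual" into "$p(s)$ annihilates $M$", and Corollary \ref{divide_b_function} supplies the reverse divisibility, so equality of the two monic polynomials is immediate.
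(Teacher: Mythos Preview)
Your approach is the same as the paper's: one divisibility is Corollary~\ref{divide_b_function}; for the other you use that $L$ has finite length with $\operatorname{end}(L)=\nu-d-u$ (via local duality $\mathcal{K}_{p,*}\cong{}^*\Hom_k(L(\nu-d),k)$), deduce $x^{\zeta}e^R_i\in R^{dn}\cdot F$ for $|\zeta|=\nu-d-u+1$, Fourier-transform to $\partial^{\zeta}e^D_i\in N$, and invoke Lemma~\ref{belong}.

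There is one genuine slip. You write ``$p(s)\in J_i$, hence $b_{J_i}(s)\mid p(s)$'', but by definition $b_{J_i}(s)$ generates $\inn(J_i)\cap k[s]$, not $J_i\cap k[s]$. Membership in $J_i$ alone does not give divisibility by $b_{J_i}$. The missing step is Lemma~\ref{belong}(ii): the element $p(s)=s(s+1)\cdots(s+\nu-d-u)$ is homogeneous for the weight $(-1,\dots,-1,1,\dots,1)$, so $\inn(p(s))=p(s)$, and therefore $p(s)\in J_i$ implies $p(s)\in\inn(J_i)\cap k[s]$, which is what you need. The paper makes exactly this point by citing both parts of Lemma~\ref{belong}.

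A minor expository remark: your detour through $\mathcal{S}_{p-d,*}$ and Theorem~\ref{MT2} to locate the support of $L$ is redundant once you have invoked local duality; the paper computes $\operatorname{end}(L)=\nu-d-u$ directly from $\mathcal{K}_{p,*}\cong{}^*\Hom_k(L(\nu-d),k)$ and the grading ${}^*\Hom_k(L(\nu-d),k)_i=\Hom_k(L_{\nu-d-i},k)$.
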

\begin{proof}
	From Theorem \ref{divide_b_function} we have $s(s+1)\cdots(s+\nu-d-u) \mid b_M(s)$. Recall that $\inn(J_i)\;\cap\;k[s]=(b_{J_i}(s))$ and $b_M(s)= \lcm_{i=1,\ldots,m}(b_{J_i}(s))$. So if we prove that for each $i=1,\ldots,m$ we have 
	$$
	s(s+1)\cdots(s+\nu-d-u) \in \inn(J_i)\;\cap\;k[s],
	$$	
	where $J_i = \pi_i(N \cap D\cdot e^D_i)$, then $b_{J_i}(s) \mid s(s+1)\cdots(s+\nu-d-u)$ and hence $b_{M}(s) \mid s(s+1)\cdots(s+\nu-d-u)$. The result follows. 	
	
	Let $a=\text{end}(L)=\max\{k \mid L_k \neq 0 \}$ (since $L$ is a finite length module), then for any $x_1^{\alpha_1}\cdots x_d^{\alpha_d}$ with $\alpha_1+\cdots +\alpha_d=a+1$ we have 
	\[x_1^{\alpha_1}\cdots x_d^{\alpha_d}e^R_i=(0,\ldots,\underbrace{x_1^{\alpha_1}\cdots x_d^{\alpha_d}}_{i\text{-th}},\ldots,0) \in R^{dn}\cdot F,\]
	where $i=1,\ldots,m$. Applying the inverse of the Fourier transform we get that
	\[\partial_1^{\alpha_1}\cdots\partial_d^{\alpha_d}e^D_i=(0,\ldots,\underbrace{\partial_1^{\alpha_1}\cdots\partial_d^{\alpha_d}}_{i\text{-th}},\ldots,0) \in D^{dn}\cdot H=N,\]Since $\inn(s(s+1)\cdots(s+a))= s(s+1)\cdots(s+a)\in D(\partial_1, \ldots, \partial_d)^{a+1}$ by Lemma \ref{belong} so it follows that
		\[s(s+1)\cdots(s+a) \in \inn(J_i) \;\cap\;k[s]\]
	for each $i=1,\ldots,m$.
	By {\it the local duality theorem for graded modules} (see \cite[Theorem 3.6.19]{BH}) we get 
	$$
	\mathcal{K}_{p,*}=H_{\m}^0(\sym_{p}(I)) \cong {}^*\Hom_{k}\big({}^*\Ext_R^d(\sym_{p}(I), R(-d)), k \big)
	\cong
	{}^*\Hom_{k}\big(L(\nu-d), k \big).
	$$
	Since the grading of ${}^*\Hom_{k}\big(L(\nu-d), k \big)$ is given by 
\[{{}^*\Hom_{k}\big(L(\nu-d), k \big)}_i=\Hom_{i}\big(L(\nu-d), k \big)=\Hom_{k}\big({L(\nu-d-i)}, k \big),\]
	(using notations as in \cite[p. 33]{BH}) we have that $a=\nu-d-u$, and so the statement of theorem follows.
\end{proof}

\section{Computing Hom with duality}
For any $i \geq 0$, we define the $k$-vector space  \[F_i=\{x_1^{\alpha_1}\cdots x_d^{\alpha_d}\partial_1^{\zeta_1}\cdots \partial_d^{\zeta_d}T_1^{\beta_1} \cdots T_{d+1}^{\beta_{d+1}}\mid |\alpha|+|\zeta|+|\beta| \leq i\}\]
with $F_{-1}=0$. Then $\gr(\mathcal{T}) \cong T:= k[x_1,\ldots,x_d,\delta_1,\ldots,\delta_d,T_1,\ldots,T_{d+1}]$ and we get a canonical map $\sigma: \mathcal{T} \to T$ given by $\sigma(x_i)=x_i, \sigma(\partial_i)=\delta_i$ and $\sigma(T_j)=T_j$.
Since $q_1,\ldots,q_d$ are linear on the $T_i$'s and have degree $\nu_1, \ldots, \nu_d$ on the $\partial_i$'s with $\sum_{i=1}^d \nu_i=\nu$, so they are homogeneous polynomials in $T$ having degree $\nu_1+1, \ldots, \nu_d+1$ respectively.

Notice that \cite[Proposition 5.2, Propositoion 5.3]{YC} holds in this case also. Since the Hilbert-Samuel function of $T$ is given by $\binom{t+3d+1}{3d+1}$, thus we have $d(\mathcal{T})=3d+1$. Set $Q=\mathcal{T}/\mathcal{T}(L_1,\ldots,L_d)$. 

\begin{proposition}\label{koszul-d(Q)}
	The following statements hold.
	\begin{enumerate}[{\rm (i)}]
		\item The dimension of $Q$ is $d(Q)=2d+1$. 
		\item The following Koszul complex in $\mathcal{M}_U^{l}(\mathcal{T})$ is exact
		\begin{equation}\label{koszul}
		\mathcal{A}: \quad 0 \to \mathcal{T}(-d) \xrightarrow{\cdot\left[\begin{smallmatrix}
		(-1)^{d-1}L_d & \cdots &-L_2 & L_1
			\end{smallmatrix}\right]}  \mathcal{T}(-d+1)^d \to \cdots \to
	\mathcal{T}(-1)^d	\xrightarrow{\cdot\left[\begin{smallmatrix}
		L_1\\
		L_2\\
		\vdots\\
		L_d
		\end{smallmatrix}\right]} \mathcal{T} \to Q \to 0.
		\end{equation}
	\end{enumerate}
\end{proposition}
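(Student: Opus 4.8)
The plan is to pass to the associated graded ring $T=\gr(\mathcal{T})=k[x_1,\ldots,x_d,\delta_1,\ldots,\delta_d,T_1,\ldots,T_{d+1}]$, a polynomial ring in $3d+1$ variables, and to reduce both assertions to a single fact: the principal symbols $\sigma(L_1),\ldots,\sigma(L_d)$ form a regular sequence in $T$. To see this, observe that every entry of the $i$-th column of $\varphi$ lies in $R=k[x_1,\ldots,x_d]$ and is homogeneous of degree $\nu_i$, so its Fourier transform is the homogeneous polynomial of degree $\nu_i$ in $\partial_1,\ldots,\partial_d$ obtained by the substitution $x_\ell\mapsto\partial_\ell$; hence $L_i=\mathcal{F}(g_i)$ is a $k[T_1,\ldots,T_{d+1}]$-linear combination of such polynomials, of pure order $\nu_i+1$, and $\sigma(L_i)$ is obtained from $L_i$ by the substitution $\partial_\ell\mapsto\delta_\ell$. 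In particular $\sigma(L_i)$ lies in the subalgebra $k[\delta_1,\ldots,\delta_d,T_1,\ldots,T_{d+1}]\subseteq T$, and under the obvious $k$-algebra isomorphism $k[\delta_1,\ldots,\delta_d,T_1,\ldots,T_{d+1}]\xrightarrow{\ \sim\ }S$, $\delta_\ell\mapsto x_\ell$, $T_j\mapsto T_j$, the element $\sigma(L_i)$ is carried to $g_i$. By Lemma \ref{reg} the sequence $g_1,\ldots,g_d$ is regular in $S$, so $\sigma(L_1),\ldots,\sigma(L_d)$ is regular in $k[\delta_1,\ldots,\delta_d,T_1,\ldots,T_{d+1}]$; since $T$ is free, hence faithfully flat, over this subring, the sequence $\sigma(L_1),\ldots,\sigma(L_d)$ remains regular in $T$.

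For (ii), note first that $L_1,\ldots,L_d$ commute pairwise: $\mathcal{F}$ is a ring automorphism of $\mathcal{T}$ and $g_1,\ldots,g_d$ commute in the subring $S\subseteq\mathcal{T}$, so $L_iL_j=\mathcal{F}(g_ig_j)=\mathcal{F}(g_jg_i)=L_jL_i$; hence $\mathcal{A}$ really is a complex. Filter each term of $\mathcal{A}$ by the filtration induced from $\{F_i\}$, shifting the basis vector indexed by $\{i_1<\cdots<i_r\}$ by $\sum_k(\nu_{i_k}+1)$, so that every differential becomes strict. Since $T$ is a domain the principal symbol is multiplicative, and therefore the associated graded complex $\gr(\mathcal{A})$ is, up to graded shifts, precisely the Koszul complex of $T$ on $\sigma(L_1),\ldots,\sigma(L_d)$; this complex is exact because these elements form a regular sequence in $T$. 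By the standard criterion that a complex carrying exhaustive, bounded-below filtrations is exact once its associated graded complex is exact (see \cite{Bjork}), the complex $\mathcal{A}$ is exact; and $\mathcal{A}$ lies in $\mathcal{M}_U^{l}(\mathcal{T})$ because each differential is homogeneous for the grading with $\deg T_j=1$ and $\deg D=0$.

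For (i), the exactness of $\mathcal{A}$ together with the strictness of the chosen filtration shows that $\gr(\mathcal{A})$ is a free resolution of $\gr(Q)$; being the Koszul resolution of $T/(\sigma(L_1),\ldots,\sigma(L_d))$, it yields $\gr(Q)\cong T/(\sigma(L_1),\ldots,\sigma(L_d))$. Hence, with respect to this good filtration,
$$d(Q)=\dim T/(\sigma(L_1),\ldots,\sigma(L_d))=(3d+1)-d=2d+1,$$
since $T$ is \CM\ of dimension $3d+1$ and $\sigma(L_1),\ldots,\sigma(L_d)$ is a regular sequence of length $d$ in it. The conceptual content is entirely in the first paragraph --- identifying $\sigma(L_i)$ with $g_i$ and importing Lemma \ref{reg}; the one point needing care is the bookkeeping of the filtration shifts that keep $\mathcal{A}$ strict, so that $\gr(\mathcal{A})$ is literally the Koszul complex on the symbols and no spurious homology is created. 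Alternatively, one can establish (ii) without passing to $\gr$, by noting that $\mathcal{T}$ is free as a right module over the commutative subring $B=k[\partial_1,\ldots,\partial_d,T_1,\ldots,T_{d+1}]\cong S$ on the monomials $\{x^\alpha\}$, that $L_1,\ldots,L_d$ is a regular sequence in $B$, and that the Koszul complex over $B$ therefore base-changes to the exact complex $\mathcal{A}$ over $\mathcal{T}$; but part (i) still requires the associated graded computation above.
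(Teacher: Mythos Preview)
Your proof is correct and follows essentially the same route as the paper: both arguments reduce to showing that the principal symbols $q_i=\sigma(L_i)$ form a regular sequence in $T$ by identifying them with the $g_i$ under the isomorphism $k[\delta_1,\ldots,\delta_d,T_1,\ldots,T_{d+1}]\cong S$, invoking Lemma~\ref{reg}, extending to $T$ by flatness, and then lifting exactness of the associated graded Koszul complex via \cite[Lemma~3.13]{Bjork}. Your argument for the commutativity $L_iL_j=L_jL_i$ via the ring automorphism $\mathcal{F}$ is slightly slicker than the paper's direct check, and your alternative proof of (ii) using that $\mathcal{T}$ is free over the commutative subring $k[\partial_1,\ldots,\partial_d,T_1,\ldots,T_{d+1}]$ is a pleasant bonus not in the paper.
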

\begin{note*} For $d=3$,
	\begin{equation*}
	\mathcal{A} : \quad 0 \to \mathcal{T}(-3) \xrightarrow{\cdot\left[\begin{smallmatrix}
		L_3 &-L_2 & L_1
		\end{smallmatrix}\right]} \mathcal{T}(-2)^3 \xrightarrow{\cdot\left[\begin{smallmatrix}
		-L_2& L_1& 0\\
		-L_3 &0 & L_1\\
		0 &-L_3 & L_2
		\end{smallmatrix}\right]}
	\mathcal{T}(-1)^3 \xrightarrow{\cdot\left[\begin{smallmatrix}
		L_1\\
		L_2\\
		L_3
		\end{smallmatrix}\right]} \mathcal{T} \to Q \to 0.
	\end{equation*}
\end{note*}

\begin{proof}
	(i) Set $\Gamma=\{F_i\}_{i\geq 0}$. Notice $\Gamma'=\{F_i \cap \mathcal{T}(L_1,\ldots,L_d)\}_{i \geq 0}$ and $\Gamma'' =\{F_i/(F_i \cap \mathcal{T}(L_1,\ldots,L_d))\}_{i \geq 0}$ are natural good filtrations for $\mathcal{T}(L_1,\ldots,L_d)$ and $Q$ respectively. By \cite[Lemma 5.1]{SCC} we have the following exact sequence \[0 \to \gr^{\Gamma'}(\mathcal{T}(L_1,\ldots,L_d)) \to \gr^{\Gamma}(\mathcal{T}) \to \gr^{\Gamma''}(Q) \to 0.\] Hence $\gr^{\Gamma''}(Q)= \bigoplus_{i\geq 0} F_i/(F_{i-1}+F_i \cap \mathcal{T}(L_1,\ldots,L_d)) \cong T/(q_1,\ldots,q_d)$.
	
	Let $B=k[x_1,\ldots,x_d,\delta_1,\ldots,\delta_{d+1}]$, $h_i=\sigma(\mathcal{F}(f_i)) \in B$ and $J=(h_1,\ldots,h_{d+1})$ be an ideal in $B$. Note that $\sigma \circ \mathcal{F}|_R: R=k[x_1, \ldots,x_d] \to k[\delta_1, \ldots,\delta_d]=A$ defined by $x_i \mapsto \partial_i \mapsto \delta_i$ is an isomorphism. So applying $\sigma \circ \mathcal{F}|_R$ to \eqref{eq1} we get 
	\begin{equation}\label{Hil-Bur}
	0 \to A^{d} \overset{\varphi'} \longrightarrow A^{d+1} \xlongrightarrow{[h_1, \ldots,h_{d+1}]} J' \to 0,	
	\end{equation}
	where $J'=(h_1,\ldots,h_{d+1})$ an ideal in $A$. Note that $B=A[x_1,\ldots,x_d]$ is a flat extension of $A$. So applying $B \otimes\_$ to \eqref{Hil-Bur} we get a re4solution of $J$, \[0 \to B^{d} \overset{\varphi'} \longrightarrow B^{d+1} \xlongrightarrow{[h_1,\ldots,h_{d+1}]} J \to 0 \] (as $B$ is a flat $A$-module so by \cite[Theorem 7.7]{Mat} we have $J' \otimes_A B= J'B=J$).
	If $\varphi=(a_{ij})_{(d+1) \times d}$ for some $a_{ij}\in R$, then $\varphi'= (\sigma(\mathcal{F}|_R(a_{ij})))_{(d+1) \times d}$. Now \[\sum_{i=1}^{d+1} \sigma \circ \mathcal{F}|_R(a_{ij})T_i=\sigma \circ \mathcal{F}|_S\left(\sum_{i=1}^{d+1} a_{ij}T_i\right)=\sigma \circ \mathcal{F}|_S(g_i)=q_j\] for all $1 \leq j \leq d$, where $\sigma \circ \mathcal{F}|_S: S=k[x_1,\ldots,x_d, T_1, \ldots ,T_{d+1}] \to k[\delta_1, \ldots,\delta_d, T_1, \ldots,T_{d+1}]=U$ is an isomerism defined by $x_i \mapsto \delta_i$ and $T_i \mapsto T_i$. So $[T_1,\ldots,T_{d+1}] \cdot \varphi'=[q_1,\ldots,q_d]$ and hence $\sym(J) \cong T/(q_1,\ldots,q_d)$, where $T=B[T_1,\ldots,T_{d+1}]$. Since $g_1,\ldots,g_d$ is a regular sequence in $S$ so $q_1,\ldots,q_d$ is a regular sequence in $U$ and hence $q_1,\ldots,q_d$ is a regular sequence in $U[x_1,\ldots,x_d]=T$ (as $T$ is a free $U$-module). Hence $\dim T/(q_1,\ldots,q_d)= \dim T-\hgt (q_1,\ldots,q_d)=(3d+1)-d=2d+1$. By \cite[Theorem 13.4]{Mat} it follows that $d(Q)=\dim T/(q_1,\ldots,q_d)=2d+1$.

	(ii) Since $L_i$'s are linear on the $T_j$'s so the shifting of degrees in \eqref{koszul} are clear. Recall that $\mathcal{A}_p \cong \sum_{r=1}^p L_{i_r} e_{i_1 \ldots i_p}$ is a free $\mathcal{T}$-module of rank $\binom{d}{p}$ with basis $\{e_{i_1 \ldots i_p}\mid 1 \leq i_1< i_2 \cdots < i_p \leq d\}$ and the differential $d: K_p \to K_{p-1}$ is defined by $d(e_{i_1 \ldots i_p})=\sum_{i=1}^r (-1)^{r-1}e_{i_1 \ldots \widehat{i_r} \ldots i_p}$ (for $p=1$; $d(e_i)=L_i$). Thus 
	\begin{align*}
	d^2(e_{i_1 \ldots i_{p+1}})= d\left(\sum_{i=1}^r (-1)^{r-1}L_{i_r}e_{i_1 \ldots \widehat{i_r} \ldots i_{p+1}}\right)&=\sum_{r=1}^{p+1} (-1)^{r-1}L_{i_r}\left(\sum_{\substack{s=1\\s \neq r}}^{p+1} (-1)^{s-1}L_{i_s}e_{i_1 \ldots \widehat{i_s} \ldots \widehat{i_r} \ldots i_{p+1}}\right)\\
	&=\sum_{r=1}^{p+1} \sum_{\substack{s=1\\s \neq r}}^{p+1} (-1)^{r+s-2}L_{i_r}L_{i_s}e_{i_1 \ldots \widehat{i_s} \ldots \widehat{i_r} \ldots i_{p+1}}
	\end{align*} 
	(position of $r$ and $s$ may vary). Without loss of generality we may assumed that $s<r$. Then coefficient of $e_{i_1 \ldots \widehat{i_s} \ldots \widehat{i_r} \ldots i_{p+1}}$ in the above expression is $(-1)^{r+s-2}L_{i_r}L_{i_s}+(-1)^{s-1}(-1)^{r-2}L_{i_s}L_{i_r}=(-1)^{r+s-3}(L_{i_s}L_{i_r}-L_{i_r}L_{i_s})$. Although $\mathcal{T}$ is non-commutative, as $L_1,\ldots,L_d$ are defined in the $\partial_r$ and $T_r$ variables so $L_iL_j-L_jL_i=0$ for all $1 \leq i, j \leq d$ with $i \neq j$ and hence $d^2(e_{i_1 \ldots i_{p+1}})=0$ for all $1 \leq p \leq d$. It follows that \eqref{koszul} is a complex. So it is enough to prove exactness in the category $\mathcal{T}$. Now \eqref{koszul} induces the following graded Koszul complex in $T$, 
	\begin{equation*}
	\mathcal{A}' : \quad 0 \to T(-d) \xrightarrow{\cdot\left[\begin{smallmatrix}
		(-1)^{d-1}q_d& \cdots &-q_2 & q_1
		\end{smallmatrix}\right]}  \mathcal{T}(-d+1)^d \to \cdots \to
	\mathcal{T}(-1)^d	\xrightarrow{\cdot\left[\begin{smallmatrix}
	q_1\\
	q_2\\
	\vdots\\
	q_d
	\end{smallmatrix}\right]} \mathcal{T} \to \frac{\mathcal{T}}{(q_1, \ldots, q_d)} \to 0.
	\end{equation*}
	As $q_1,\ldots,q_3$ is a regular sequence, the above complex is exact. Hence by \cite[Lemma 3.13]{Bjork} we get that \eqref{koszul} is exact. 
\end{proof}

\begin{corollary}
	\label{vanishing_Ext}
	For any $j\neq d$ we have {\normalfont ${}^*\Ext_{\mathcal{T}}^j(Q,\mathcal{T})=0$}, and  ${}^*\Ext_{\mathcal{T}}^d(Q,\mathcal{T})\neq 0$.
\end{corollary}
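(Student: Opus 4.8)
The plan is to read off both assertions from the free resolution of $Q$ furnished by Proposition~\ref{koszul-d(Q)} together with the good homological behaviour of the ring $\mathcal{T}$. First I would note that, by Proposition~\ref{koszul-d(Q)}(ii), the Koszul complex $\mathcal{A}$ of \eqref{koszul} is an exact sequence of \emph{graded free} left $\mathcal{T}$-modules of length $d$ resolving $Q$; in particular $\projdim_{\mathcal{T}} Q \leq d$, and this alone forces ${}^*\Ext_{\mathcal{T}}^j(Q,\mathcal{T}) = 0$ for every $j > d$. Concretely, applying ${}^*\Hom_{\mathcal{T}}(-,\mathcal{T})$ to $\mathcal{A}$ produces the dual Koszul complex of graded right $\mathcal{T}$-modules
\[0 \to \mathcal{T} \to \mathcal{T}(1)^d \to \cdots \to \mathcal{T}(d-1)^d \to \mathcal{T}(d) \to 0,\]
whose cohomology in homological degree $j$ is ${}^*\Ext_{\mathcal{T}}^j(Q,\mathcal{T})$, the top one being the cokernel of the last map.

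Next I would invoke the structure of $\mathcal{T} = A_d(k)[T_1,\dots,T_{d+1}]$. With respect to the filtration $\{F_i\}_{i\geq 0}$ of Section~5 its associated graded ring is the commutative polynomial ring $T = k[x_1,\dots,x_d,\delta_1,\dots,\delta_d,T_1,\dots,T_{d+1}]$ in $3d+1$ variables, which is Cohen--Macaulay and Gorenstein; this is exactly the computation already used to get $d(\mathcal{T}) = 3d+1$. By the general theory of such filtered rings (see \cite{Bjork}, \cite{SCC}), $\mathcal{T}$ is therefore Auslander--Gorenstein and satisfies the Cohen--Macaulay identity: for every nonzero finitely generated (graded) $\mathcal{T}$-module $N$,
\[j(N) + d(N) = d(\mathcal{T}) = 3d+1, \qquad j(N) := \inf\{\, j \mid {}^*\Ext_{\mathcal{T}}^j(N,\mathcal{T}) \neq 0 \,\}.\]
Applying this to $N = Q$ and using $d(Q) = 2d+1$ from Proposition~\ref{koszul-d(Q)}(i) yields $j(Q) = (3d+1) - (2d+1) = d$. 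By the very definition of $j(Q)$ this says ${}^*\Ext_{\mathcal{T}}^j(Q,\mathcal{T}) = 0$ for all $j < d$, while the infimum is attained, so ${}^*\Ext_{\mathcal{T}}^d(Q,\mathcal{T}) \neq 0$. Together with the vanishing for $j > d$ obtained above, the corollary follows.

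The step I expect to require the most care is the passage from the classical Weyl algebra to the polynomial extension $\mathcal{T}$: one must make precise that $\gr^{F}(\mathcal{T})$ is the commutative polynomial ring $T$ (essentially done in Section~5) and then cite the correct form of Bj\"ork's Auslander/Cohen--Macaulay results so that the grade--dimension identity is available, ideally directly in the graded category $\mathcal{M}_U^l(\mathcal{T})$ (or else deduce the graded statement from the ungraded one, which is harmless for finitely generated modules). A more self-contained alternative would be to compute the cohomology of the displayed dual Koszul complex by hand, checking that $L_1,\dots,L_d$ act as a regular sequence on $\mathcal{T}$ as a right module; but this reduces, via the associated graded, to the regularity of $q_1,\dots,q_d$ in $T$ — the same input as Proposition~\ref{koszul-d(Q)} — so the two routes rest on the same fact.
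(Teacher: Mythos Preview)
Your proposal is correct and follows essentially the same route as the paper: use the length-$d$ free resolution from Proposition~\ref{koszul-d(Q)}(ii) to kill ${}^*\Ext^j$ for $j>d$, then invoke Bj\"ork's grade--dimension identity $j(Q)+d(Q)=3d+1$ together with $d(Q)=2d+1$ from Proposition~\ref{koszul-d(Q)}(i) to obtain $j(Q)=d$. The paper cites \cite[Theorem 7.1, p.~73]{Bjork} directly for the identity without the additional discussion of the Auslander--Gorenstein property, but the argument is the same.
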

	\begin{proof}
		Since \eqref{koszul} is a free resolution of $Q$ so we have ${}^*\Ext_{\mathcal{T}}^d(Q,\mathcal{T})=0$ for $j>d$. From \cite[Theorem 7.1, p 73]{Bjork} we have $j(Q) + d(Q) = 3d+1$, where $j(Q)=\inf\{k \mid {}^*\Ext_{\mathcal{T}}^k(Q,\mathcal{T})\neq 0\}$. Again $d(Q)=2d+1$ by Proposition \ref{koszul-d(Q)}. Hence $j(Q)=d$. The result follows.
		\end{proof}

\begin{theorem}
	\label{duality}
	For any $i$ we have the following isomorphism of graded $U$-modules {\rm(}with hypotheses as in \ref{ass}{\rm)}
	$$
	{}^*\Ext_{\TT}^i(Q, S) \cong {}^*\Tor_{d-i}^{\TT}\big({}^*\Ext_{\TT}^d(Q, \TT), \;S \big).
	$$
\end{theorem}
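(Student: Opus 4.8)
The plan is to argue entirely at the level of the explicit finite free resolution of $Q$ furnished by the Koszul complex $\mathcal{A}$ of \eqref{koszul}, exploiting the fact (Corollary \ref{vanishing_Ext}) that ${}^*\Ext_{\TT}^j(Q,\TT)$ is concentrated in the single homological degree $j=d$. Write $\mathcal{A}\colon 0\to \mathcal{A}_d\to\cdots\to\mathcal{A}_1\to\mathcal{A}_0\to Q\to 0$ with each $\mathcal{A}_j$ a finitely generated free \emph{left} $\TT$-module, and set $E:={}^*\Ext_{\TT}^d(Q,\TT)$. Applying ${}^*\Hom_{\TT}(-,\TT)$ to $\mathcal{A}$ produces a cochain complex $C^{\bullet}$ of finitely generated free \emph{right} $\TT$-modules, $C^j={}^*\Hom_{\TT}(\mathcal{A}_j,\TT)$, whose cohomology is ${}^*\Ext_{\TT}^j(Q,\TT)$. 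By Corollary \ref{vanishing_Ext} this cohomology vanishes for $j<d$; for $j=0$ this is the statement ${}^*\Hom_{\TT}(Q,\TT)=0$, which gives injectivity of $C^0\to C^1$ (here one uses $j(Q)=d\ge 3>0$). Hence $0\to C^0\to\cdots\to C^{d-1}\to C^d\to E\to 0$ is exact, and reindexing by $P_j:=C^{d-j}$ exhibits $P_{\bullet}$ as a finite free resolution of $E$ by right $\TT$-modules.

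Next I would compute ${}^*\Tor_{d-i}^{\TT}(E,S)$ from this resolution as the homology in degree $d-i$ of $P_{\bullet}\otimes_{\TT}S$. For a finitely generated free left $\TT$-module $F$ there are canonical graded isomorphisms ${}^*\Hom_{\TT}(F,\TT)\otimes_{\TT}S\cong {}^*\Hom_{\TT}(F,S)$, natural in $F$: both sides are identified with a direct sum of copies of $S$ carrying the same degree shifts, since the $T_i$ (hence the grading) are central in $\TT$ and $S$ is a left $\TT$-module. Taking $F=\mathcal{A}_i$ and assembling over the complex, $P_{\bullet}\otimes_{\TT}S$ is identified, via the reindexing $i=d-j$, with the cochain complex ${}^*\Hom_{\TT}(\mathcal{A}_{\bullet},S)$, whose $i$-th cohomology is ${}^*\Ext_{\TT}^i(Q,S)$ because $\mathcal{A}_{\bullet}$ resolves $Q$. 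Tracking the degree shifts recorded in \eqref{koszul} shows this identification is one of graded $U$-modules. Therefore ${}^*\Tor_{d-i}^{\TT}(E,S)=H_{d-i}(P_{\bullet}\otimes_{\TT}S)\cong H^i\big({}^*\Hom_{\TT}(\mathcal{A}_{\bullet},S)\big)={}^*\Ext_{\TT}^i(Q,S)$, which is the assertion.

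The verifications that $P_{\bullet}$ is exact away from $E$ (this is exactly Corollary \ref{vanishing_Ext} together with the Koszul resolution of Proposition \ref{koszul-d(Q)}(ii)) and that the comparison isomorphism ${}^*\Hom_{\TT}(\mathcal{A}_i,\TT)\otimes_{\TT}S\cong{}^*\Hom_{\TT}(\mathcal{A}_i,S)$ commutes with the differentials are routine. The one point that needs genuine care is the bookkeeping over the noncommutative ring $\TT=D[T_1,\ldots,T_{d+1}]$: one must keep straight which modules are left and which are right $\TT$-modules, so that $\mathcal{A}_{\bullet}$ and $S$ are treated as left modules while $C^{\bullet}$ and $P_{\bullet}$ are right modules, and one must observe that $U=k[T_1,\ldots,T_{d+1}]$ lies in the center of $\TT$, so that every complex and every comparison map above consists of graded $U$-modules and $U$-linear maps. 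Conceptually the proof is just the observation that, because $Q$ admits the finite free resolution $\mathcal{A}$, the derived dual ${}^*\Hom_{\TT}(Q,\TT)$ is quasi-isomorphic to $E$ placed in homological degree $d$, and that the derived dual into $S$ is computed by derived-tensoring this object with $S$ over $\TT$; the explicit-complex argument above simply unwinds this without appealing to derived categories.
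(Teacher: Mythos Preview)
Your argument is correct. The paper proves the same statement via a spectral sequence: it forms the third-quadrant double complex ${}^*\Hom_{\TT}(\A,\TT)\otimes_{\TT}\B$, where $\B$ is the Koszul resolution of $S$ by $\partial_1,\ldots,\partial_d$, and then compares the two spectral sequences. Filtering by columns collapses (each ${}^*\Hom_{\TT}(\A_j,\TT)$ is free) to the row ${}^*\Hom_{\TT}(\A_j,\TT)\otimes_{\TT}S\cong{}^*\Hom_{\TT}(\A_j,S)$, whose cohomology is ${}^*\Ext_{\TT}^{\bullet}(Q,S)$; filtering by rows collapses, by Corollary~\ref{vanishing_Ext}, to the single column ${}^*\Ext_{\TT}^d(Q,\TT)\otimes_{\TT}\B_{\bullet}$, whose homology is ${}^*\Tor_{\bullet}^{\TT}({}^*\Ext_{\TT}^d(Q,\TT),S)$.

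Your route uses the same ingredients (the resolution $\A$, Corollary~\ref{vanishing_Ext}, and the natural isomorphism ${}^*\Hom_{\TT}(F,\TT)\otimes_{\TT}S\cong{}^*\Hom_{\TT}(F,S)$ for free $F$) but bypasses the double complex entirely: since the cohomology of $C^{\bullet}={}^*\Hom_{\TT}(\A,\TT)$ is concentrated in degree $d$, the reindexed complex $P_{\bullet}=C^{d-\bullet}$ is already a free right-$\TT$-module resolution of $E$, so you can compute ${}^*\Tor^{\TT}_{\bullet}(E,S)$ directly from $P_{\bullet}\otimes_{\TT}S$ without introducing $\B$ at all. This is a legitimate and slightly more elementary packaging of the same content; what the paper's double-complex presentation buys is that it makes the symmetry between resolving $Q$ and resolving $S$ manifest, whereas your version hides the role of $\B$ inside the definition of Tor. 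Your care with left/right $\TT$-module structures and with $U$-linearity is exactly what is needed; nothing is missing.
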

\begin{proof}
We have $S \cong \TT/(\partial_1,\ldots, \partial_d)$. Since $\partial_1,\ldots, \partial_d$ is a regular sequence in $\TT$ (considering $\TT$ as left ring), so	a resolution of $S$ in $\mathcal{M}^l_U(\mathcal{T})$ is given by the Koszul complex 
	
	\begin{equation}\label{sec_Koszul}
	\mathcal{B}:=\K(\partial_1, \ldots, \partial_d): \quad 0 \to \mathcal{T} \to \mathcal{T}^d \to \cdots \to  \mathcal{T}^d \to \mathcal{T} \to S \to 0.
	\end{equation}
	
	We define the following third quadrant double complex ${}^*\Hom_{\TT}(\A, \TT) \otimes_\TT \B$,	
	\begin{center}	
		\begin{tikzpicture}
		\matrix (m) [matrix of math nodes,row sep=2em,column sep=3em]
		{{}^*\Hom_{\TT}(\A_d,\TT)\otimes_\TT \B_d &	\ldots & {}^*\Hom_{\TT}(\A_1,\TT)\otimes_\TT \B_d & {}^*\Hom_{\TT}(\A_0, \TT)\otimes_\TT \B_d \\
		\quad \vdots \quad&\quad \vdots \quad & \quad \vdots \quad & \quad \vdots \quad\\
		{}^*\Hom_{\TT}(\A_d,\TT)\otimes_\TT \B_1 &	\cdots & {}^*\Hom_{\TT}(\A_1,\TT)\otimes_\TT \B_1 & {}^*\Hom_{\TT}(\A_0, \TT)\otimes_\TT \B_1 \\
		{}^*\Hom_{\TT}(\A_3,\TT)\otimes_\TT \B_0 &	\cdots & {}^*\Hom_{\TT}(\A_d,\TT)\otimes_\TT \B_0 & {}^*\Hom_{\TT}(\A_0, \TT)\otimes_\TT \B_0. \\
		};
		\path[-stealth]
		(m-1-2) edge (m-1-1)
		(m-1-3) edge (m-1-2)
		(m-1-4) edge (m-1-3)
		(m-2-2) edge (m-2-1)
		(m-2-3) edge (m-2-2)
		(m-2-4) edge (m-2-3)
		(m-3-2) edge (m-3-1)
		(m-3-3) edge (m-3-2)
		(m-3-4) edge (m-3-3)
		(m-4-2) edge (m-4-1)
		(m-4-3) edge (m-4-2)
		(m-4-4) edge (m-4-3)
		(m-1-1) edge (m-2-1)
		(m-2-1) edge (m-3-1)
		(m-3-1) edge (m-4-1)
		(m-1-2) edge (m-2-2)
		(m-2-2) edge (m-3-2)
		(m-3-2) edge (m-4-2)
		(m-1-3) edge (m-2-3)
		(m-2-3) edge (m-3-3)
		(m-3-3) edge (m-4-3)
		(m-1-4) edge (m-2-4)
		(m-2-4) edge (m-3-4)
		(m-3-4) edge (m-4-4)
		;				
		\end{tikzpicture}	
	\end{center}	
Note that this is a double complex in the category of graded $U$-modules, that is, all its elements are graded $U$-modules and all its maps are homogeneous homomorphisms of graded $U$-modules (due to the construction of $\mathcal{M}^l_U(\mathcal{T})$ and $\mathcal{M}^r_U(\mathcal{T})$).
	
	Since each ${}^*\Hom_{\TT}(\A_j, \TT) \in \mathcal{M}^r_U(\mathcal{T})$ is a free module then by computing homology on each column we get that the only the last row does not vanish 
	On the other hand, by \autoref{vanishing_Ext} when we compute homology on each row only the leftmost column does not vanish.
	
	Thus the spectral sequence determined by the first filtration is given by
	\begin{equation*}
	{}^{\text{I}}E_2^{p,q}=\begin{cases}
	{}^*\Ext_{\TT}^p\big(Q,S\big)  \quad \text{if } q = d,\\
	0 \qquad \qquad\quad\text{otherwise,}
	\end{cases}
	\end{equation*}
	and the spectral sequence determined by the second filtration is given by
	\begin{equation*}
	{}^{\text{II}}E_2^{p,q}=\begin{cases}
	{}^*\Tor_{d-q}^{\mathcal{T}}({}^*\Ext_{\mathcal{T}}^d\big(Q,\TT),S\big) \quad \text{if } p = d,\\
	0 \qquad\qquad\qquad\qquad \qquad\text{otherwise.}
	\end{cases}
	\end{equation*}
	
	Since both spectral sequences collapse so we get the following isomorphisms of graded $U$-modules
	\begin{equation*}
	{}^{\text{I}}E_2^{i,d} \cong H^{i+d}(\text{Tot}({}^*\Hom_{\TT}\big(\A, \TT) \otimes_\TT \B)\big) \cong {}^{\text{II}}E_2^{d,i},
	\end{equation*}	
The result follows.
\end{proof}

\begin{theorem}
	\label{deRham_cohomology}[with hypotheses as in \ref{ass}]
	Then we have the following isomorphism of graded $U$-modules
	$$\mathcal{K} \cong H_{dR}^0(Q) = \{w \in Q \mid \partial_i \bullet w= 0 \text{ for all } 1 \leq i \leq d\}.
	$$
	In particular, for any integer $p$ we have an isomorphism of $k$-vector spaces
	$$
	\mathcal{K}_{p,*} \cong H_{dR}^0(Q_{p}) = \{w \in Q_{p} \mid \partial_i \bullet w= 0\text{ for all } 1 \leq i \leq d \}.
	$$
	\begin{proof}
		From the resolution \eqref{koszul} of $Q$ we get the following complex in 
		\begin{equation}
		{}^*\Hom_\TT(\A,\TT):\quad 0 \to \mathcal{T} \xrightarrow{\left[\begin{smallmatrix}
			L_1\\
			L_2\\
			\vdots\\
			L_d
			\end{smallmatrix} \right]\cdot} \mathcal{T}(1)^d \to\cdots \mathcal{T}(d-1)^d\xrightarrow{\left[\begin{smallmatrix}
			(-1)^{d-1}L_d& \cdots &-L_2 & L_1
			\end{smallmatrix}\right] \cdot}
		 \mathcal{T}(d) \to 0.
		\end{equation}
	
		Computing the $d$-th cohomology of this complex we get ${}^*\Ext_{\TT}^d(Q,\TT) \cong \left(\TT/(L_1,\ldots,L_d)\TT\right)(d)$, where  $\TT/(L_1,\ldots,L_d)\TT=\tau(Q)$ is the standard transposition of $Q$.		
		
		Note that the Koszul complex \eqref{sec_Koszul} is a resolution of $S$. Computing the third homology of the Koszul complex $\tau(Q)(d) \otimes_\TT \B$  we get the following isomorphisms of graded $U$-modules
		\begin{align*}
		{}^*\Tor_d^\TT({}^*\Ext_\TT^d(Q,\TT), S) &\cong H_d\big( \tau(Q)(d) \otimes_\TT \B \big)\\
		&\cong
		\{w \in \tau(Q)(d) \mid  w\bullet \partial_i= 0\text{ for all } 1 \leq i \leq d\}.
		\end{align*}
		Since $\tau(T_i)=T_i$ so we have an isomorphism of graded $U$-modules
	\begin{align*}
	&\{w \in \tau(Q)(d) \mid w\bullet \partial_i= 0\text{ for all } 1 \leq i \leq d\}\\
	\cong & \{w \in Q(d) \mid w\bullet \partial_i= 0\text{ for all } 1 \leq i \leq d  \}.
	\end{align*}
		From Proposition \ref{hom} and Theorem \ref{duality} we get the following isomorphisms of graded $U$-modules
		\begingroup
		\allowdisplaybreaks
		\begin{align*}
		\mathcal{K}&\cong {}^*\Hom_\TT(Q, S)(-d)\\
		&\cong{}^*\Tor_d^\TT({}^*\Ext_\TT^d(Q,\TT), S)(-d)\\
		&\cong\{w \in Q \mid w\bullet \partial_i= 0\text{ for all } 1 \leq i \leq d  \},
		\end{align*}
		\endgroup
		The result follows.
	\end{proof} 
\end{theorem}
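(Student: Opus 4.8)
The plan is to assemble the isomorphism from three ingredients already established: the presentation $\mathcal{K}\cong{}^*\Hom_{\TT}(Q,S)(-d)$ from Proposition \ref{hom}, the duality isomorphism ${}^*\Ext_{\TT}^i(Q,S)\cong{}^*\Tor_{d-i}^{\TT}\big({}^*\Ext_{\TT}^d(Q,\TT),S\big)$ of Theorem \ref{duality} applied with $i=0$, and the vanishing ${}^*\Ext_{\TT}^j(Q,\TT)=0$ for $j\ne d$ from Corollary \ref{vanishing_Ext} (which is exactly what forces both spectral sequences in the proof of Theorem \ref{duality} to collapse). Putting $i=0$ gives ${}^*\Hom_{\TT}(Q,S)={}^*\Ext_{\TT}^0(Q,S)\cong{}^*\Tor_d^{\TT}\big({}^*\Ext_{\TT}^d(Q,\TT),S\big)$, so the whole problem reduces to identifying the module ${}^*\Ext_{\TT}^d(Q,\TT)$ and then computing this particular $\Tor$.

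First I would compute ${}^*\Ext_{\TT}^d(Q,\TT)$ from the free resolution \eqref{koszul}. Applying ${}^*\Hom_{\TT}(-,\TT)$ to the Koszul complex $\A$ turns it into a cochain complex of free right $\TT$-modules whose differentials are again given by (left) multiplication by the $L_i$, up to signs. Its top cohomology $\coker\big(\TT(d-1)^d\to\TT(d)\big)$ is the right module $\big(\TT/(L_1,\ldots,L_d)\TT\big)(d)$; since the $L_i$ lie in $k[\partial_1,\ldots,\partial_d][T_1,\ldots,T_{d+1}]$ and the standard transposition $\tau$ fixes the $T_i$, the right ideal $(L_1,\ldots,L_d)\TT$ is the image under $\tau$ of the left ideal $\TT(L_1,\ldots,L_d)$, so this cohomology is $\tau(Q)(d)$. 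Hence ${}^*\Ext_{\TT}^d(Q,\TT)\cong\tau(Q)(d)$ as graded $U$-modules.

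Next I would compute ${}^*\Tor_d^{\TT}\big(\tau(Q)(d),S\big)$ using the Koszul resolution \eqref{sec_Koszul}, $\B=\K(\partial_1,\ldots,\partial_d)$, of $S=\TT/(\partial_1,\ldots,\partial_d)$; this is legitimate because $\partial_1,\ldots,\partial_d$ is a regular sequence in $\TT$. Tensoring on the left by $\tau(Q)(d)$ and reading off the $d$-th (top) homology of the Koszul complex yields $\{w\in\tau(Q)(d)\mid w\bullet\partial_i=0\ \text{for all }i\}$. Because $\tau$ fixes the $T_i$ and sends right multiplication by $\partial_i$ on $\tau(Q)$ to left multiplication by $\partial_i$ on $Q$ (the sign from $\tau(\partial^\alpha)=(-1)^{|\alpha|}\partial^\alpha$ being irrelevant to the kernel), this is canonically $\{w\in Q(d)\mid\partial_i\bullet w=0\ \text{for all }i\}=H_{dR}^0(Q)(d)$. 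Chaining the isomorphisms and cancelling the twist $(d)$ against the twist $(-d)$ from Proposition \ref{hom} gives $\mathcal{K}\cong H_{dR}^0(Q)$. The ``in particular'' statement follows by running the same argument in the category graded by the $T_i$: the resolutions \eqref{koszul} and \eqref{sec_Koszul} and the duality of Theorem \ref{duality} are all homogeneous for the $T$-grading, so everything restricts to the degree-$p$ strand and produces $\mathcal{K}_{p,*}\cong H_{dR}^0(Q_p)$.

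I expect the main difficulty to be bookkeeping rather than conceptual: carefully tracking the passage between left and right $\TT$-modules induced by $\tau$ (and checking that the signs it introduces do not disturb any kernel condition), and verifying that the degree shifts — the $(-d)$ from Proposition \ref{hom}, the $(d)$ appearing in ${}^*\Ext_{\TT}^d(Q,\TT)$, and the internal shifts in the two Koszul complexes — cancel exactly as claimed. One should also make sure that the identification of the top cohomology of ${}^*\Hom_{\TT}(\A,\TT)$ with $\tau(Q)(d)$ genuinely uses the special form of the $L_i$ (linear in the $T_j$ with coefficients polynomial in the $\partial_i$) and is not merely formal.
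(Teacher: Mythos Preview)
Your proposal is correct and follows essentially the same route as the paper: both compute ${}^*\Ext_{\TT}^d(Q,\TT)\cong\tau(Q)(d)$ from the dual of the Koszul resolution $\A$, then compute ${}^*\Tor_d^{\TT}$ against $S$ via the Koszul resolution $\B$, and finally invoke Proposition~\ref{hom} and Theorem~\ref{duality} to assemble the isomorphism with the shifts cancelling. Your write-up is in fact a bit more careful than the paper's about why $\tau$ interacts well with the $L_i$ and the $T_i$, but the argument is the same.
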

 \begin{note*} For $d=3$,
 	\begin{equation*}
 	{}^*\Hom_\TT(\A,\TT):\quad 0 \to \mathcal{T} \xrightarrow{\left[\begin{smallmatrix}
 		L_1\\
 		L_2\\
 		L_3
 		\end{smallmatrix} \right]\cdot}\mathcal{T}(1)^3\xrightarrow{\left[\begin{smallmatrix}
 		-L_2& L_1& 0\\
 		-L_3 &0 & L_1\\
 		0 &-L_3 & L_2
 		\end{smallmatrix}\right]\cdot} \mathcal{T}(2)^3\xrightarrow{\left[\begin{smallmatrix}
 		L_3 &-L_2 &L_1
 		\end{smallmatrix}\right]\cdot}  
 	\mathcal{T}(3) \to 0.
 	\end{equation*}
 \end{note*}

\section{Examples and computations}
The following function can be used to compute the $b$-function of each $D$-module $M$ from Notation \ref{not} using {\it Macaulay2}. 

\vspace{0.25cm}
\begin{spacing}{0.73}
	{   \fontfamily{cmtt}
		\selectfont
		\hspace*{-.43cm}\verb|needsPackage "Dmodules"|\\					
		\verb|bFunctionRees = (I, p, d) -> (|\\
		\hspace*{.5cm} \verb|R := ring I;| \\
		\hspace*{.5cm} \verb|W := makeWeylAlgebra R;|\\
		\hspace*{.5cm} \verb|e := d+1;|\\
		\hspace*{.5cm} \verb|V1 := apply(0..(d-1),i->((vars W)_(0,i)));|\\
		\hspace*{.5cm} \verb|D1 := toList V1;|\\
		\hspace*{.5cm} \verb|T := W[S_1..S_e], U := QQ[Z_1..Z_e];|\\
		\hspace*{.5cm} \verb|V2 := apply(1..e,i->S_i);|\\
		\hspace*{.5cm} \verb|D2 := toList V2;|\\
		\hspace*{.5cm} \verb|A := Fourier (map(W, R, D1)) (res I).dd_2;|\\
		\hspace*{.5cm} \verb|L := matrix{D2} * A;|\\   
		\hspace*{.5cm} \verb|src := flatten entries (map(T, U, D2)) basis(p - d, U);|\\
		\hspace*{.5cm} \verb|dest := flatten entries (map(T, U, D2)) basis(p - d+1, U);|\\ 
		\hspace*{.5cm} \verb|m := #src, n := #dest;|\\
		\hspace*{.5cm} \verb|H := mutableMatrix(W, m, d * n);|\\
		\hspace*{.5cm} \verb|V3 := apply(1..e,i->1);|\\
		\hspace*{.5cm} \verb|D3 := toList V3;|\\
		\hspace*{.5cm} \verb|V4 := apply(1..d,i->-1);|\\
		\hspace*{.5cm} \verb|D4 := toList V4;|\\	        
		\hspace*{.5cm} \verb|for i from 0 to m - 1 do (|\\   
		\hspace*{1cm} \verb|M=(i,k)->src#i * L_(0, k-1);|\\
		\hspace*{1cm} \verb|for j from 0 to n - 1 do (|\\	 			
		\hspace*{1.5cm}\verb|for l from 1 to d do (|\\
		\hspace*{2cm}\verb|N=(i,j,k)->M(i,k)// gens ideal(dest#j);|\\
		\hspace*{2cm}\verb|F := N(i,j,l);|\\
		\hspace*{2cm}\verb|H_(i, j + (l-1)*n) = (map(W, T, D3)) F_(0, 0); |\\
	    \hspace*{1.5cm}\verb|);|\\
		\hspace*{1cm} \verb|);|\\
		\hspace*{.5cm} \verb|);|\\
		\hspace*{.5cm} \verb|bM := bFunction(coker matrix H, D4, toList(m:0));|\\	
		\hspace*{.5cm} use R;	\\
		\hspace*{.5cm} bM     \\\
		)\\
	}
\end{spacing}

\begin{example}\label{ex1}
	Let $R=k[x,y,z], S=R[a,b,c,d]$. Consider a matrix \[\varphi=\left[\begin{array}{ccc}
	x & 0 & 0 \\ 
	y& x & 0 \\ 
	z & y & x^2\\ 
	0 & z & z^2
	\end{array}  \right].\] In \cite[Example 4.10]{BM} it was checked that there exists a hight two perfect ideal $I$ satisfying $G_3$ with presentation matrix $\varphi$. It was also shown that $\mathcal{I}= \mathcal{L}+ I_3(B_2(\varphi))$, where $\sym(I)=S/\mathcal{L}$ and 
	
	\begin{equation*}
	B_2(\varphi)=
	\left[\begin{array}{cccc}
	a &b &cx &c(c^2-bd)\\ 
	b & c & 0 & 0 \\ 
	c & d & dz &d(-b^2+ac)
	\end{array} .\right] 
	\end{equation*}
	Thus $\mathcal{K}=\mathcal{I}/\mathcal{L}= I_3(B_2(\varphi))$. Computing in {\it Macaulay2} we get $I_3(\varphi)=(x^4,x^2z^2,-x^3z+xyz^2,-x^2yz+y^2z^2-xz^3)$ and $\mathcal{K}=(c^3x+b^2dz-acdz-bcdx, c^5-b^4d+2ab^2cd-a^2c^2d-2bc^3d+b^2cd^2,   
	b^2c^2dx+c^4dz+ab^2d^2z-a^2cd^2z-abcd^2x-bc^2d^2z, b^3cdx-abc^2dx+bc^3dz-b^2cd^2z)$. Notice in this case $\delta=4$. We also get $b_M(s)=s$ if $p=3,4$ and $b_M(s)=s(s+1)$ if $p=5$. Comparing with Theorem \ref{lowest-x-deg} we get that $s+\delta-3-u=s$, i.e., $u=1$ if $p=3,4$ and $s+\delta-3-u=s+1$, i.e., $u=0$ if $p=5$. Comparing with generators of $\mathcal{K}$ we can say that Theorem \ref{lowest-x-deg} holds true in this case. 
Note that $\mathcal{K}_{*,1}\neq 0$ which satisfies Corollary \ref{vanishing-nonvanishing}.

Computations in Macaulay $2$:	
\begin{verbatim}
i1 : R = QQ[x,y]
o1 = R
o1 : PolynomialRing
i2 : load "bFunctionRees.m2"
i3 : I = ideal(x^4,x^2z^2,-x^3z+xyz^2,-x^2yz+y^2z^2-xz^3)
            4  2 2   3     2   2    2 2   3
o3 = ideal(x ,x z ,-x z+xyz ,-x yz+y z -xz )
o3 : Ideal of R
i4 : for p from 3 to 5 do << factorBFunction bFunctionRees(I, p, 3) << endl;
(s)
(s)
(s)(s + 1)
\end{verbatim}		
\end{example}

\begin{remark}\label{rmk1}
Note that in Example \ref{ex1}, $\mathcal{K}$ is generated by elements of degree $(3,1),(5,0),(5,1)$, so the value of $u$, the lowest possible $x$-degree for an element in the graded part $\mathcal{K}_{p,*}$ will be $1$ for $p=3,4$ and $0$ for $p=5$ onwards which is clear from the output in {\it Macaulay2}. Observing the change of the value of $u$ in the output we can also claim that $\mathcal{K}$ has generators of degrees $(3,1),(5,0)$ which is true. 
\end{remark}

\begin{example}\label{ex2}
	Let $R=k[x,y,z], S=R[K_1,K_2,K_3,K_4]$. Consider a matrix \[\varphi=\left[\begin{array}{ccc}
	x & 0 & 0 \\ 
	y& x & 0 \\ 
	z & y & x^5\\ 
	0 & z & z^5
	\end{array}  \right].\]	Then $I_3(\varphi)=(x^7,x^2z^5,-x^6z+xyz^5,-x^5yz+y^2z^5-xz^6)$. Since $\grade I_3(\varphi)\geq 2$, so by the {\it Hilbert–Burch Theorem} (see \cite[Theorem 1.4.17]{BH}), $I=I_3(\varphi)$ has the free resolution $0 \to R^3 \to R^4 \to I \to 0$ (by the converse part) and $I$ is perfect of $\grade 2$. As $\grade I_1(\varphi) =3$
	$\grade I_2(\varphi) \geq3$ and $\grade I_3(\varphi)\geq 2$,
	by \ref{equv-Gd-Fitt} it follows that $I$ satisfies $G_3$. Again by \cite[Theorem 2.1]{BM} we have $\mathcal{I}=\mathcal{L}:(x,y,z)^6$. So $\mathcal{I}=(xK_2+yK_3+zK_4,xK_1+yK_2+zK_3,x^5K_3+z^5K_4,x^4K_3^3+z^4K_2^2K_4-z^4K_1K_3K_4+x^3yK_3^2K_4+x^3zK_3K_4^2,x^3K_3^5+z^3K_2^4K_4-2z^3K_1K_2^2K_3K_4+z^3K_1^2K_3^2K_4+2x^2yK_3^4K_4+(xy^2+2x^2z)K_3^3K_4^2+2xyzK_3^2K_4^3+xz^2K_3K_4^4,x^2K_3^7+z^2K_2^6K_4-3z^2K_1K_2^4K_3K_4+3z^2K_1^2K_2^2K_3^2K_4-z^2K_1^3K_3^3K_4+3xyK_3^6K_4+(3y^2+3xz)K_3^5K_4^2-y^2K_2K_3^3K_4^3+6yzK_3^4K_4^3-2yzK_2K_3^2K_4^4+3zK_3^3K_4^4-z^2K_2K_3K_4^5, \ldots)$ (computed in {\it Macaulay2}). Thus $\mathcal{K}=(x^4K_3^3+z^4K_2^2K_4-z^4K_1K_3K_4+x^3yK_3^2K_4+x^3zK_3K_4^2,x^3K_3^5+z^3K_2^4K_4-2z^3K_1K_2^2K_3K_4+z^3K_1^2K_3^2K_4+2x^2yK_3^4K_4+(xy^2+2x^2z)K_3^3K_4^2+2xyzK_3^2K_4^3+xz^2K_3K_4^4,x^2K_3^7+z^2K_2^6K_4-3z^2K_1K_2^4K_3K_4+3z^2K_1^2K_2^2K_3^2K_4-z^2K_1^3K_3^3K_4+3xyK_3^6K_4+(3y^2+3xz)K_3^5K_4^2-y^2K_2K_3^3K_4^3+6yzK_3^4K_4^3-2yzK_2K_3^2K_4^4+3zK_3^3K_4^4-z^2K_2K_3K_4^5, \ldots)$ (omitting generators which are linear in $K_i$'s). Note that $\delta=7$. Computing in {\it Macaulay2} we get $b_M(s)=s$ if $p=3,4$; $b_M(s)=s(s+1)$ if $p=5,6$; $b_M(s)=s(s+1)(s+2)$ if $p=7,8$; and $b_M(s)=s(s+1)(s+2)(s+3)$ if $p=9,10$ and $b_M(s)=s(s+1)(s+2)(s+3)(s+4)$ if $p=11$. Comparing with Theorem \ref{lowest-x-deg} we get that $u=4$ if $p=3,4$; $u=3$ if $p=5,6$; $u=2$ if $p=7,8$; $u=1$ if $p=9,10$ and $u=0$ if $p=11$ (so onwards). From the observation in Remark \ref{rmk1} we can also say that $\mathcal{K}$ has generators of degrees $(3,4),(5,3),(7,2),(9,1),(11,0)$ which is in fact true.
	
	Computations in Macaulay $2$ (a little time consuming, but finally we get the result with less effort):
	\begin{verbatim}
	i1 : R = QQ[x,y]
	o1 = R
	o1 : PolynomialRing
	i2 : load "bFunctionRees.m2"
	i3 : I = ideal(x^7,x^2z^5,-x^6z+xyz^5,-x^5yz+y^2z^5-xz^6)
	
	            7  2 5   6     5   5    2 5   6
	o3 = ideal(x ,x z ,-x z+xyz ,-x yz+y z -xz )
	o3 : Ideal of R
	i4 : time for p from 3 to 11 do << factorBFunction bFunctionRees(I, p, 3) << endl;
(s)
(s)
(s)(s + 1)
(s)(s + 1)
(s)(s + 1)(s + 2)
(s)(s + 1)(s + 2)
(s)(s + 1)(s + 2)(s + 3)
(s)(s + 1)(s + 2)(s + 3)
(s)(s + 1)(s + 2)(s + 3)(s + 4)
-- used 726.766 seconds
\end{verbatim}
\end{example}

\section{Applications and Observations}
\s\label{reltype-R(I)} Let $S=\bigoplus_{n \geq 0} S_n$ be a finitely generated standard graded ring over a Noetherian commutative ring $S_0$. Consider $S$
as a factor ring $S_0[T]/\mathcal{J}$ of a polynomial ring corresponding to a minimal generating set of $S_1$.
\begin{definition}\label{reltype}
The maximal degree occurring in a homogeneous minimal generating set of $\mathcal{J}$
	is called {\it the relation type of $S$} and is denoted by $\reltype S$. 
\end{definition}
It is well-known that $\reltype S \leq \reg_{(\underline{T})} S +1$ as an $S_0[T]$-module, see \cite[p. 1]{T}. Consider $S$ as standard graded with $\deg r=0$ for all $r \in R$ and $\deg T_i=1$ for all $i$. Then we can define relation type for $\R(I)=S/\mathcal{I}=R[T_1,\ldots,T_{d+1}]/\mathcal{I}$. Recall that $\mathcal{L}=S\mathcal{I}_{1}$ and $\mathcal{K}_{p} \cong  \mathcal{I}_{p}/(S_{p-1}\mathcal{I}_{1})$ for all $p\geq 1$ (as $\mathcal{K}\cong \mathcal{I}/\mathcal{L}$). Thus if $\reltype \R(I)>1$, then we can say that $\reltype \R(I)$ maximal degree occurring in a homogeneous minimal generating set of $\mathcal{K}$.

In Example \ref{ex2}, $\reltype \R(I) \geq 11$ (in fact equal as we get comparing with the output in {\it Macaulay2} using general method). So in this case we can say that $\reg_{(\underline{T})} \R(I) \geq 11-1=10$. 
On the other hand, we can sometime compute the exact value of $\reltype \R(I)$ using our function when $\reg \R(I)$ is known. 

\begin{remark}
Let $R=k[x_1, \ldots, x_n]$ be a polynomial ring, and $\m=(x_1, \ldots, x_n)$ be the homogeneous maximal ideal of $R$. Let $I$ be an ideal in $R$ generated by homogeneous polynomials of the same degree $\nu$. Let $I_{\m}$ be the image of $I$ in the local ring $(R_{\m}, \m R_{\m})$. As $I$ is a homogeneous ideal, $I$ and $I_m$ have same generating set. Now $C=R \backslash \m$ is a multiplicative set in $\R(I)$ (as $R \subseteq \R(I)$ is a sub-ring). Note that $\R(I_{\m})\cong \R(I) \otimes_R R_{\m} \cong S/\mathcal{I} \otimes_R R_{\m}\cong  C^{-1}S/C^{-1}\mathcal{I} \cong R_{\m}[T_1, \ldots, T_{d+1}]/C^{-1}\mathcal{I}$. So 
	\begin{align*}
	F(I_{\m})=\R(I_m) \otimes_{R_{\m}} R_{\m}/\m R_{\m}\cong \R(I) \otimes_R R_{\m} \otimes_{R_{\m}} R_{\m}/\m R_{\m} &\cong \R(I) \otimes_R R_{\m}/\m R_{\m} \\
	&\cong \R(I) \otimes_R R/\m =F(I),
	\end{align*}
where $F(I)$ is the {\it special fiber ring} of $I$. In view of these relations we can use some properties of $\R(I)$ and $F(I)$ which are known when $I$ is an ideal in a local ring $(R,\m)$.	
\end{remark}

\s\label{CM-ness} Take $R$ and $I$ as in assumption \ref{ass}. Note that in this case {\it second analytic deviation} $\mu(I)-\ell(I)$ one is equivalent to the fact that $\ell(I)=d$, where $\ell(I)=\dim F(I)$ is the {\it analytic spread} of $I$.
\begin{definition}
Let $I=(f_1, \ldots, f_n)=(\underline{f})$ be an ideal of a local ring $R$ of dimension $d$. By $H_i(\underline{f})$ we denote the $i$-th homology of the Koszul complex $\K(\underline{f})$. We say $I$ satisfies sliding depth if \[\depth H_i(\underline{f}) \geq d-n+i \quad \text{for all } i \geq 0.\]
\end{definition}

\begin{definition}
We say $I$ is {\it strongly Cohen-Macaulay} if the Koszul homology modules of $I$ with respect to one (and then to any) generating set are Cohen-Macaulay.
\end{definition}

Since grade two perfect ideals are in the linkage class of a complete intersection so always strongly Cohen-Macaulay if the ring $R$ is Gorenstein local, see \cite[Theorem 1.14, Example 2.1]{H}. Clearly if $I$ is strongly Cohen-Macaulay, then $I$ satisfies sliding depth. By \cite[Proposition 2.4]{UV} we have $\ell(I)=d$ when $(R,\m)$ is a \CM~ local ring of dimension $d$, $I$ satisfies $G_d$, sliding depth, $\mu(I)=d+1$ and $\hgt I \geq 1$. Thus if $(R,\m)$ is a Gorenstein local ring of dimension $d$, $I$ is a grade two perfect ideal satisfying $G_d$ and $\mu(I)=d+1$, then $\ell(I)=d$ and hence $I$ has second analytic deviation one.

Note that \begin{equation}\label{eq-fiiber}
F(I)= \frac{\R(I)}{\m\R(I)} \cong \R(I)_{*,0} \cong \left(\frac{S}{\mathcal{I}}\right)_{*,0}=\frac{S_{*,0}}{\mathcal{I}_{*,0}}=\frac{k[T_1,\ldots,T_{d+1}]}{\mathcal{I}_{*,0}}=\frac{U}{\mathcal{I}_{*,0}}.
\end{equation}
In \cite[3.3]{KPU}, it is observed that $F(I)=k[f_1t, \ldots, f_{d+1}t] \cong k[f_1, \ldots, f_{d+1}] \subset R$ (this isomorphism not necessarily homogeneous). Thus $F(I)$ is a domain (as $R$ is so). Moreover, if we assume that $\dim F(I)=\ell(I)=d$, then $\mathcal{I}_{*,0}$ is a height one prime ideal in $U$ and hence it is principle (as $U$ is a UFD). This fact implies that $F(I)$ is \CM.
%

Clearly $\mathcal{I}_{1,0}=0$. As otherwise, if $a_1T_1+\cdots+a_{d+1}T_{d+1} \in \mathcal{I}_{1,0}$ with $a_i \in k$ such that not all are zero, then $\sum_{i=1}^{d+1} a_i (f_i t)=0$, that is, $\left(\sum_{i=1}^{d+1} a_i f_i\right) t=0$ in $R[t]$. Hence $\sum_{i=1}^{d+1} a_i f_i=0$ in $R[t]$ (as $R[t]$ is a domain) and hence in $R$. This contradicts the fact that $\{f_1, \ldots, f_{d+1}\}$ is a minimal generating set of $I$. Thus $\reltype F(I)=p_0>1$ and $\mathcal{I}_{*,0}$ will be generated by an element $u$ having bi-degree $(p_0,0)$ as an ideal in $U$.
Notice $p_0$ is the minimum value such that $\mathcal{I}_{p_0,0} \neq 0$. Now 
$\mathcal{K}_{p,0} \cong  \mathcal{I}_{p,0}/(S_{p-1,0}~\mathcal{I}_{1,0})$ for all $p \geq 1$ and hence $\mathcal{K}_{*,0} \cong \mathcal{I}_{*,0}$. Clearly image of $u$ in $\sym(I)$ will generate $\mathcal{K}_{*,0}$ and will belong to any minimal generation set of $\mathcal{K}$ (as no other element having bi-degree $(p,q)$ with $q>0$ can generate $\mathcal{K}_{*,0}$). Moreover, if $\dim F(I)=d$, then by \cite[Lemma 5.2]{HK} we have $\reltype F(I)=r(I)+1$, where $r(I)$ denotes the reduction number of $I$. Also $\deg u = p_0=e(F(I))$ (when we consider $S$ is graded) where $e(F(I))$ denotes the Hilbert-Samuel multiplicity of $F(I)$. Since $F(I)\cong k[T_1,\ldots,T_{d+1}]/(u)$ is a hypersurface, so \[\K(u): \quad 0 \to U(-p_0) \overset{\cdot u}{\lrt} U \to 0\] is a minimal free resolution of $F(I)$ and hence $\reg F(I)=p_0-1=\reltype F(I)-1$, where $\reg F(I)$ denotes the regularity of $F(I)$. Thus from the output of our function ({\it bFunctionRees}$(I,p,d)$) in {\it Macaulay2}, we get $\reltype F(I)$ from which we get $e(F(I)), \reg F(I)$ and $r(I)$. 

For Examples \ref{ex1} and \ref{ex2} we get that $\reltype F(I)=5,11$ respectively. 
\begin{remark}
Let $\nu=d$. Since $\sum_{i=1}^d \nu_i=\nu$ so in this case $\nu_i=1$ for all $i$, that is, $I$ has a linear presentation. By Corollary \ref{vanishing-nonvanishing} it follows that for all $p \geq d$, $\mathcal{K}_{p,u} = 0$ for all $u>0$ and $\mathcal{K}_{p,0} \neq 0$. Again by Remark \ref{first-differ} we have $\mathcal{K}_{p,0} = 0$ for all $p<d$. So by the above observation we get that $\reltype F(I)=d$. From the Sub-section \ref{reltype-R(I)} we also get that $\reltype \R(I)=d$.
\end{remark}

\section{Acknowledgments}
I would like to thank Prof. Tony J. Puthenpurakal, my PhD advisor, for his support, guidance, suggesting the problem and for helpful discussions. I am also thankful to Prof. Jugal Verma for his continuous support. 

I thank UGC, Govt. of India for providing financial support for this study.

\end{document}